\newcommand{\modif}[1]{{\color{black}#1}}
\newtheorem{theorem}{Theorem}
\newtheorem{proposition}{Assumption}
\newtheorem{corollary}{Corollary}
\newtheorem{lemma}{Lemma}
\newtheorem{remark}{Remark}
\newtheorem{definition}{Definition}
\newtheorem{example}{Example}
\numberwithin{equation}{section} 
\date{}
\author{Arnaud Guillin\footnote{Universit\'{e} Clermont Auvergne, CNRS, LMBP, F-63000 Clermont-Ferrand, France and Institut Universitaire de France. Email : arnaud.guillin@uca.fr},$\ $ Pierre Le Bris\footnote{Institut des Hautes Etudes Scientifiques, F-91440 Bures-sur-Yvette, France. Email: lebris@ihes.fr}$\ $ and Pierre Monmarch\'{e}\footnote{Sorbonne Universit\'{e}, CNRS, LJLL, F-75005, Paris, France. Email : pierre.monmarche@sorbonne-universite.fr}}
\title{Uniform in time propagation of chaos for the 2D vortex model and other singular stochastic systems}
\begin{document}
\maketitle

\begin{abstract}
In this article, we adapt the work of Jabin and Wang in \cite{JW18} to show the first result of uniform in time propagation of chaos for a class of singular interaction kernels. In particular, our models contain the Biot-Savart kernel on the torus and thus the 2D vortex model.
\end{abstract}

\textit{Keywords:} propagation of chaos, relative entropy, logarithmic Sobolev inequality, 2D vortex equation, singular kernels.\\

\textit{2020 Mathematics Subject Classifications:} 35Q30, 26D10, 60F17, 60H10, 47D07, 76R99.

%
%
%
%

\section{Introduction}

\subsection{Framework}
Our main subject is the convergence of the law of a stochastic particles system with mean field singular interactions towards its non linear limit. More precisely we will establish the first quantitative bounds in the number of particles uniformly in time. Let $K:\mathds{T}^d\rightarrow \mathbb R^d$ be an \emph{interaction kernel} on the $d$-dimensional ($d\geqslant 2$)  $1$-periodic torus $\mathds{T}^d$ (represented as $[-\frac{1}{2},\frac{1}{2}]^d$), on which we will specify some assumptions later. In this paper, we consider the non linear stochastic differential equation of \textit{McKean-Vlasov type}
\begin{equation}\label{o_Lang}
\left\{
    \begin{array}{ll}
        dX_t=\sqrt{2}dB_t+K\ast\bar{\rho}_t(X_t)dt\\
        \bar{\rho}_t=\text{\modif{Density of }Law}(X_t),
    \end{array}
\right.
\end{equation}
where $X_t\in \mathds{T}^d$, $(B_t)_{t\geq0}$ is a $d$-dimensional Brownian motion  and    $f\ast g(x)=\int_{\mathds{T}^d} f(x-y)g(y)dy$ stands for the convolution operation on the  torus. The \modif{density} $\bar{\rho}_t$ satisfies
\begin{equation}\label{o_Liou}
\partial_t\bar{\rho}_t=-\nabla\cdot\left(\bar{\rho}_t\left(K\ast\bar{\rho}_t\right)\right)+\Delta\bar{\rho}_t.
\end{equation}
In the other words, the non-linear Equation~\eqref{o_Liou} has the following natural probabilistic interpretation :  the solution $\bar{\rho}_t$ is the density of the law at time $t$ of the $\mathds{T}^{d}$ valued process $(X_t)_{t\geq0}$ evolving according to \eqref{o_Lang}. As we understand \eqref{o_Lang} to be the motion of a particle interacting with its own law, \eqref{o_Liou} thus describes the dynamic of a cloud of charged particles (where $(X_t)_{t\geq0}$ would be one particle). In particular, it holds importance in plasma physics, see \cite{Vlasov_1968}. We also consider the associated system of particles, describing the motion of $N$ particles interacting  with one another through the interaction kernel $K$.
\begin{equation}\label{o_Lang_part}
dX^i_t=\sqrt{2}dB^i_t+\frac{1}{N}\sum_{j=1}^NK(X^i_t-X^j_t)dt,
\end{equation}
where $X^i_t\in\mathds{T}^d$ is the position at time $t$ of the i-th particle, and $(B^i_t, 1\leq i\leq N)$ are independent Brownian motions in $\mathds{T}^d$. We assume that $(X^i_0)_{i=1,..,N}$ are exchangeable, i.e. have a  law which is invariant by permutation of the particles, so that this property is true for all times.
We denote by $\rho_N$ the \modif{density of the} law of the system of particles, formally satisfying 
\begin{equation}\label{part_liou}
\partial_t\rho_N=-\sum_{i=1}^N\nabla_{x_i}\cdot\left(\left(\frac{1}{N}\sum_{j=1}^NK(x_i-x_j)\right)\rho_N\right)+\sum_{i=1}^N\Delta_{x_i}\rho_N.
\end{equation}
We define $\rho_N^k$ the \modif{density of the} law of the first $k$ marginals of the $N$ particles system
\begin{align*}
\rho_N^k(t,x_1,..,x_k)=\int_{\mathds{T}^{(N-k)d}}\rho_N(t,x_1,..,x_N)dx_{k+1}...dx_N,
\end{align*}
which is also, thanks to the exchangeability of particles, the \modif{density of the} law of any $k$ marginals. More precisely, in this work, we focus on the equation \eqref{part_liou} and we will not address the question of the well-posedness of the stochastic equation  \eqref{o_Lang_part}.

Here, although we will consider general assumptions on $K$, the main example motivating our work is the singular interaction kernel known as \textit{the Biot-Savart} kernel, defined in $\mathbb{R}^2$ by
\begin{equation}\label{BS}
K(x)=\frac{1}{2\pi}\frac{x^{\perp}}{|x|^2}=\frac{1}{2\pi}\left(-\frac{x_2}{|x|^2},\frac{x_1}{|x|^2}\right).
\end{equation}
Consider the 2D incompressible Navier-Stokes system on $\modif{x}\in\mathbb{R}^2$ 
\begin{align*}
\partial_t u=&-u\cdot\nabla u-\nabla p+\Delta u \\
\nabla\cdot u=&0,
\end{align*}
where $p$ is the local pressure. Taking the curl of the equation above, we get that $\omega(t,x)=\nabla\times u(t,x)$ satisfies \eqref{o_Liou} with $K$ given by \eqref{BS} (see for instance Chapter 1 of \cite{MP94}).

One can see equation~\eqref{o_Lang_part} as an approximation of equation~\eqref{o_Lang}, where the law $\bar{\rho}_t$ is replaced by the empirical measure $\frac{1}{N}\sum_{i=1}^N\delta_{X^i_t}$. It is well known, at least in a setting where the interaction kernel $K$ is Lipschitz continuous (\cite{Mel96}, \cite{Szn91}), that, under some mild conditions on $K$, for all fixed $k\in\mathbb N$ \modif{and all $t\geq0$}, $\rho_N^k\modif{(t,\cdot)}$ converges toward $\bar{\rho}_k\modif{(t,\cdot)}=\bar{\rho}_{\modif{t}}^{\otimes k}$ as $N$ tends to infinity, where $\bar{\rho}_{\modif{t}}$ is the \modif{density of the} law of $X_{\modif{t}}$ solution of \eqref{o_Lang}. Thus, provided the particles start independent, they will stay (more or less) independent, as the law of any $k$-uplet of particles converges toward a tensorized law. The expression \textit{propagation of chaos} to describe this behavior was coined by Kac \cite{Kac56}), and we refer to Sznitman \cite{Szn91} for a landmark study of the phenomenon. Of course there is a huge literature on propagation of chaos however limited for uniform in time results, and always when the interaction potential is regular, see Malrieu \cite{Mal01} for an example by a coupling approach under convexity conditions and the recent Durmus $\& al$ \cite{DEGZ20} via reflection coupling allowing non convexity but where the interaction is considered small and acts mainly as a perturbation. For more recent results we refer to \cite{Lac21} \modif{(and its uniform in time extension in \cite{LLF22})} for a nice new approach for propagation of chaos furnishing better speed but strong assumptions on the interactions (regularity, integrability), including a nice survey of the existing results, and \cite{DT21} using Lions derivatives for uniform in time results on the torus but also under regularity assumptions on the interaction kernel.

Hence, both these classical and recent results do not apply to the Biot-Savart kernel, which is singular at $0$. For a convergence without rate, and specific to the vortex 2D equation, a first striking result appeared in \cite{FHM14}, relying on proving that close encounters of particles are rare and that the possible limits of the particles system are made of solutions of the nonlinear SDE. As a second step, in the recent work \cite{JW18}, Jabin and Wang have proven that propagation of chaos still holds in this case with a {\it quantitative} rate. The goal of the present paper is to extend their works and show a {\it quantitative propagation of chaos uniform in time}. We refer to \cite{FHM14,JW18,BJW19,BJW19b,BJW20} for detailed discussions on the literature concerning propagation of chaos with singular kernels, which is still at its beginning for quantitative rates. \modif{Shortly after this work was submitted, an alternative approach to global in time estimates was developed in \cite{RS23}, see also the very recent preprint \cite{CRS23}.}

\modif{Obtaining uniform in time estimates for propagation of chaos is an important challenge to tackle. One of its applications for instance concerns the use of particle system, which can easily be simulated numerically, to approximate the solution of a nonlinear physics motivated problem, such as here the vorticity equation arising from fluid mechanics. Likewise, it provides a framework for studying noisy gradient descent used in Machine Learning (see the recent \cite{CRW22}) and thus attracts some attention.}

The approach of Jabin and Wang \cite{JW18} is to compute the time evolution of the relative entropy of $\rho_N$ with respect to $\bar \rho_N$ and then  to use an integration by parts to deal with the singularity of $K$ thanks to the regularity of the probability density $\bar{\rho}_t$. In order to improve  this argument to get uniform in time propagation of chaos, our main contribution is the proof of time-uniform bounds for \modif{$\bar{\rho}_t$}, \modif{ in Lemma~\ref{Bornes_derivees},} from which a time-uniform logarithmic Sobolev inequality is deduced. From the latter, in the spirit of the work of Malrieu \cite{Mal01} in the smooth and convex case, the Fisher information appearing in the entropy dissipation yields a control on the relative entropy itself, inducing the time uniformity. {\modif However a major difficulty is that this quantities are expressed in terms of the solution of the nonlinear equation. We then have to prove a logarithmic Sobolev inequality, uniformly in time, for $\bar\rho_t$, and a sufficient decay of the derivatives of $\bar\rho_t$. To do so, it requires new estimates on regularity and a priori bounds of the solutions of non linear 2D vortex equation. Indeed, we prove that the bounds on the derivative of $\bar{\rho}_t$ decay sufficiently fast (see again Lemma~\ref{Bornes_derivees}) to ensure uniform in time convergence without smallness assumption on the interaction.} Finally, the remaining error term in the entropy evolution due to the difference between \eqref{o_Liou} and \eqref{part_liou} is tackled thanks to a law of large number already used in \cite{JW18}. \modif{Compared to \cite{FHM14} we thus obtain a quantitative and uniform in time result.}

The organization of the article is as follows. For the remaining of this section, we state the main theorem as well as the various assumptions on both the initial condition and the interaction kernel $K$. In Section~\ref{Preliminary_work} we gather various tools that will be useful later on: we state the regularity of the solutions, the existence of uniform in time bounds on the density and its derivatives, and we prove a logarithmic Sobolev inequality. Finally, in Section~\ref{Section_Proof}, we prove the uniform in time propagation of chaos following the method described in \cite{JW18}.

\subsection{Main results}

First, let us describe the assumptions made on the initial condition. Unless otherwise specified,   $L^p$ and $H^p$ respectively refer to the spaces $L^p(\mathds{T}^d)$ and $H^p(\mathds{T}^d)$. Given $\lambda>1$, we denote by $\mathcal{C}^{\infty}_\lambda(\mathcal{X})$ the set of functions $f$ in $\mathcal{C}^\infty(\mathcal{X})$ such that  $0<\frac{1}{\lambda}\leq f\leq \lambda<\infty$, and $\mathcal C^\infty_{>0}(\mathcal X)=\cup_{\lambda>1}\mathcal{C}^{\infty}_\lambda(\mathcal{X})$, which is simply the set of positive smooth functions when $\mathcal X$ is compact.

\begin{proposition}\label{assumption_rho_0}
We make the following assumptions on $\bar{\rho}_0$ :
\begin{description}
\item[$\bullet$]There is $\lambda>1$ such that $\bar{\rho}_0\in\mathcal{C}^{\infty}_\lambda(\mathds{T}^d)$
\item[$\bullet$] For all $n\geq 1$, $C^0_n:=\|\nabla^n\bar{\rho}_0\|_{L^\infty}<\infty $
\end{description}
\end{proposition}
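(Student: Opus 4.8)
Since the statement is a list of standing assumptions on the initial datum $\bar\rho_0$ rather than an assertion to be derived, there is nothing to prove \emph{per se}; what is worth spelling out is why these hypotheses are natural and harmless, and how they will be used, since this governs the layout of Section~\ref{Preliminary_work}.

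The admissible class is non-empty and in fact large: the uniform density $\bar\rho_0\equiv 1$ lies in $\mathcal C^\infty_\lambda(\mathds{T}^d)$ for every $\lambda>1$, with $C^0_n=0$ for all $n\geq 1$; and so does any $\bar\rho_0=1+\varepsilon g$ with $g\in\mathcal C^\infty(\mathds{T}^d)$ of zero mean and $\varepsilon\|g\|_{L^\infty}<1$. Moreover, on the compact torus the second bullet is a consequence of the first: for $f\in\mathcal C^\infty(\mathds{T}^d)$ each $\nabla^n f$ is continuous on a compact set, hence bounded, so $C^0_n:=\|\nabla^n\bar\rho_0\|_{L^\infty}<\infty$ is merely a notation for the constants that Section~\ref{Preliminary_work} will propagate along the flow \eqref{o_Liou}. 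The genuine content of the hypothesis is simply that $\bar\rho_0$ be a smooth probability density bounded away from $0$ and $\infty$.

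Positivity and smoothness are exactly the two features needed downstream. They allow one to start \eqref{o_Liou} from a classical solution and then to propagate, \emph{uniformly in time}, both a two-sided bound $\lambda_\infty^{-1}\leq\bar\rho_t\leq\lambda_\infty$ with $\lambda_\infty<\infty$ and bounds on $\|\nabla^n\bar\rho_t\|_{L^\infty}$, via maximum-principle and parabolic-regularity estimates for the drift $K\ast\bar\rho_t$; the two-sided bound then yields, by a Holley--Stroock perturbation of the logarithmic Sobolev inequality of the uniform measure on $\mathds{T}^d$, a time-uniform logarithmic Sobolev inequality for $\bar\rho_t$; and the regularity of $\bar\rho_t$ is what makes the Jabin--Wang integration by parts, used to absorb the singularity of $K$ in the time evolution of the relative entropy of $\rho_N$ with respect to $\bar\rho_t^{\otimes N}$, legitimate. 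So no obstacle arises at this point; the real difficulty, postponed to Section~\ref{Preliminary_work}, is to upgrade the classical regularity and positivity estimates for $\bar\rho_t$, which a priori hold only \emph{locally} in time, into bounds that are \emph{uniform} over $t\in[0,\infty)$ --- the new ingredient responsible for the time-uniformity of the propagation of chaos.
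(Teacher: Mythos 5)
You are right that there is nothing to prove here: the environment is the paper's ``Assumption'' theorem style, so the paper itself offers no proof, and your reading of its role (positivity and smoothness of $\bar\rho_0$ feeding the uniform-in-time bounds, the Holley--Stroock argument, and the Jabin--Wang integration by parts) matches the paper's use of it. Your remark that the second bullet is automatic for a smooth function on the compact torus, so that $C^0_n$ is merely notation for constants to be propagated, is also correct.
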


\modif{
\begin{remark}
Let us discuss the smoothness assumption on the initial condition. Via Theorem~\ref{exist} below, which follows from the result of \cite{Ben94}, this will ensure the smoothness of $\bar{\rho}_t$. This fact (and the fact that we consider, as we will see later, a smooth solution $\rho_N$ of \eqref{part_liou}) allows us to justify all calculations in comfortable way. This could however be improved. First, as in \cite{JW18}, the calculations should hold for any entropy solution of \eqref{part_liou}. Second, it is also shown in \cite{Ben94}, in the case of the vorticity equation, that an initial condition in $L^1$ yields existence and uniqueness of a solution of \eqref{o_Liou} which is smooth for positive times. One could thus think of using the non-uniform in time result of \cite{JW18} on a small time interval $[0,\epsilon]$, and then complete the proof on $[\epsilon,\infty[$ with our result. We would then require some bounds on $\bar{\rho}_\epsilon$ and its derivatives of a sufficient order (depending on the Sobolev embedding, see the proof of Lemma~\ref{Bornes_derivees} below) that we could propagate in time.

For the sake of clarity and conciseness however we choose not to insist in this direction.
\end{remark}
}

Let us describe the assumptions on the interaction kernel $K$. Below, $\nabla \cdot$ stands for the divergence operator.

\begin{proposition}\label{assumption_K}
We make the following assumptions on $K$ :
\begin{description}
\item[$\bullet$] $\|K\|_{L^1}<\infty$.
\item[$\bullet$] In the sense of distributions, $\nabla\cdot K=0$,
\item[$\bullet$] There is a matrix field $V\in L^{\infty}$ such that $K=\nabla\cdot V$, i.e for $1\leq\alpha\leq d$, $K_\alpha=\sum_{\beta=1}^d\partial_\beta V_{\alpha,\beta}$. 
\end{description}
\end{proposition}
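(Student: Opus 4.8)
Strictly speaking Assumption~\ref{assumption_K} is a hypothesis rather than a claim to be proved; the substantive point is that the motivating example — the Biot--Savart kernel, suitably periodised on $\mathds{T}^2$ — satisfies the three listed conditions, and that is what I would establish. I would set things up by writing $K=-\nabla^\perp G$ with $\nabla^\perp f=(-\partial_2 f,\partial_1 f)$ and $G$ the zero-mean solution on $\mathds{T}^2$ of $-\Delta G=\delta_0-1$ (the constant being forced by $\int_{\mathds{T}^2}\Delta G=0$). Standard elliptic theory gives $G(x)=-\tfrac1{2\pi}\log|x|+g(x)$ near the origin with $g$ smooth on $\mathds{T}^2$, obtained by subtracting a cutoff of the planar logarithm and solving for the smooth remainder; equivalently, $K$ is smooth on $\mathds{T}^2\setminus\{0\}$ and coincides near $0$ with $K_{\mathbb{R}^2}(x):=\tfrac1{2\pi}\tfrac{x^\perp}{|x|^2}$ up to a smooth field.

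The first two bullets are then immediate. Near $0$ one has $|K(x)|\sim|x|^{-1}$, which is integrable in dimension $2$, and $\mathds{T}^2$ is compact, so $\|K\|_{L^1}<\infty$. For the divergence, $\nabla\cdot K=-\nabla\cdot\nabla^\perp G=\partial_1\partial_2 G-\partial_2\partial_1 G=0$ in $\mathcal{D}'(\mathds{T}^2)$, since mixed distributional derivatives commute; in particular $\int_{\mathds{T}^2}K=-\int_{\mathds{T}^2}\nabla^\perp G=0$, a fact used below.

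The only real work is the third bullet, $K=\nabla\cdot V$ with $V\in L^\infty$. The naive choice suggested by $K=-\nabla^\perp G$, namely the antisymmetric matrix with entries $\pm G$, fails because $G$ is only logarithmically bounded, so one must be more careful. I would first treat the scale-invariant planar model: seeking $V$ homogeneous of degree $0$ (hence bounded and smooth off $0$), the equation $\nabla\cdot V=K_{\mathbb{R}^2}$ reduces in polar coordinates to two scalar ODEs admitting $2\pi$-periodic solutions, which yields the explicit field
\begin{equation*}
V_{\mathbb{R}^2}(x)=\frac{1}{4\pi|x|^2}\begin{pmatrix}-2x_1x_2 & x_1^2-x_2^2\\[3pt] x_1^2-x_2^2 & 2x_1x_2\end{pmatrix}\in L^\infty(\mathbb{R}^2);
\end{equation*}
one checks directly that $\nabla\cdot V_{\mathbb{R}^2}=K_{\mathbb{R}^2}$ pointwise off $0$, and that the surface term on $\{|x|=\varepsilon\}$ vanishes as $\varepsilon\to0$, so the identity holds in $\mathcal{D}'(\mathbb{R}^2)$ with no spurious Dirac mass. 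To transfer this to the torus I would cut off near $0$: with $\chi\in\mathcal{C}^\infty_c$, $\chi\equiv1$ near $0$, one has $K=\chi\,K_{\mathbb{R}^2}+(\text{smooth})$ and, by the distributional product rule, $\chi\,K_{\mathbb{R}^2}=\nabla\cdot(\chi V_{\mathbb{R}^2})-V_{\mathbb{R}^2}\cdot\nabla\chi$, where $\chi V_{\mathbb{R}^2}$ extends by $0$ to an element of $L^\infty(\mathds{T}^2)$ and all remaining terms are smooth fields on $\mathds{T}^2$. Finally, any smooth \emph{mean-zero} field $W$ on $\mathds{T}^2$ is a divergence of a smooth matrix field (take $V'_{\alpha,\beta}=\partial_\beta\Delta^{-1}W_\alpha$, well defined since $\int W_\alpha=0$), and the accumulated remainder here is mean-zero because both $K$ and $\nabla\cdot(\chi V_{\mathbb{R}^2})$ are. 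Summing the pieces gives $K=\nabla\cdot V$ with $V\in L^\infty(\mathds{T}^2)$.

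The third bullet is the only genuine obstacle. One must produce a truly bounded $V$ despite the unbounded stream function — the homogeneous-degree-$0$ ansatz is what makes the planar step work — and one must verify that passing from $\mathbb{R}^2$ to the torus introduces neither a Dirac mass (handled by the vanishing surface term) nor a non-representable constant vector field (handled by the mean-zero property of $K=-\nabla^\perp G$). The first two bullets are routine.
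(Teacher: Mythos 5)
Your reading of the statement is right: it is an assumption, and the only thing to check is that the motivating example (the periodised Biot--Savart kernel) satisfies it, which you do correctly -- I verified that your degree-$0$ homogeneous matrix does satisfy $\nabla\cdot V_{\mathbb{R}^2}=\frac{1}{2\pi}\frac{x^\perp}{|x|^2}$ off the origin, that no Dirac mass appears since $V_{\mathbb{R}^2}$ is bounded, and that your cutoff-plus-$\Delta^{-1}$ treatment of the smooth mean-zero remainder closes the argument on $\mathds{T}^2$. Your route is, however, genuinely different from the paper's. The paper does not construct $V$: for the periodised kernel it invokes Proposition~2 of Jabin--Wang, which rests on the results of Bourgain--Brezis (any $K\in L^d$) or Phuc--Torres (any $K$ with $|K(x)|\leq M/|x|$), and it verifies the $L^1$ and divergence-free bullets by citing the $\mathcal{C}^\infty$ convergence of the lattice sum $\sum_{k}\tilde K(x+k)$; an explicit bounded $V$ (a diagonal matrix with $\arctan$ entries) is only recorded in a remark for the whole-plane kernel. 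What your approach buys is a self-contained, fully explicit construction for Biot--Savart, bypassing both the Bourgain--Brezis/Phuc--Torres machinery and the periodisation-sum convergence issue (since you define the torus kernel through the Green's function $G$ of $-\Delta$, which is the natural object for the vorticity equation and agrees with the paper's periodised kernel). What the paper's citation-based route buys is generality: the assumption is meant to cover a whole class of singular kernels (e.g.\ the collision-type kernels $K=\nabla\cdot(M\mathds{1}_{\phi\leq 0})$ and anything in $L^d$ or dominated by $M/|x|$), for which an explicit homogeneous ansatz like yours is not available, so the abstract existence results are the ones actually needed to state the theorem at that level of generality.
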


The problem of finding a matrix field $V\in L^{\infty}(\mathds{T}^d)$ such that $K=\nabla\cdot V$ for a given $K$ is a complex mathematical question. We refer to \cite{BB03} and \cite{PT08} and the references therein for a more detailed discussion on the literature. As it was noted in Proposition~2 of \cite{JW18}, the existence of such a matrix $V$ is true for any kernel $K\in L^d$ (using the results of \cite{BB03}), or  for any kernel $K$ such that $\exists M>0, \forall x\in\mathds{T}^d, |K(x)|\leq M/|x|$ (using the results of \cite{PT08}).

\begin{remark}
If a function $a$ satisfies $\nabla\cdot a=0$, then for $\psi :\mathds{T}^d\mapsto \mathbb{R}$ we have $\nabla\cdot (a\psi)=(a\cdot\nabla)\psi$
\end{remark}

Suppose $\tilde K$ is an interaction kernel in $\mathbb{R}^d$ (such as the Biot-Savart kernel). It is possible to periodize $\tilde K$ on the torus as follows. For $f$ a function on the torus (identified as a  $1$-periodic function on $\mathbb R^d$), writing $f\ast_{\mathcal{X}}g(x)=\int_{\mathcal{X}} f(x-y)g(y)dy$ the convolution operator on a  space $\mathcal X$, 
\begin{align*}
\tilde K\ast_{\mathbb{R}^d} f(x)=\int_{\mathbb{R}^d}\tilde K(x-y)f(y)dy&=\sum_{k\in\mathbb{Z}^d}\int_{\mathds{T}^d} \tilde K(x-y+k)f(y-k)dy\\
&=\int_{\mathds{T}^d} \left(\sum_{k\in\mathbb{Z}^d}\tilde K(x-y+k)\right)f(y)dy,
\end{align*}
and thus $\tilde K\ast_{\mathbb{R}^d} f(x)=K\ast_{\mathds{T}^d} f(x)$, where $K(x)=\sum_{k\in\mathbb{Z}^d}\tilde K(x+k)$. In particular, the periodized Biot-Savart kernel obtained by taking $\tilde K$ from \eqref{BS} reads
\begin{equation}\label{BS_tore}
K(x)=\frac{1}{2\pi}\frac{x^{\perp}}{|x|^2}+\frac{1}{2\pi}\sum_{k\in\mathbb{Z}^2, k\neq 0}\frac{(x-k)^\perp}{|x-k|^2}:=\tilde{K}(x)+K_0(x).
\end{equation}
It has been shown that the sum defining $K_0$ converges (in the sense that $K_0(x)=\lim_{N\rightarrow\infty}\sum_{|k|^2\leq N, k\neq0}\frac{(x-k)^\perp}{|x-k|^2}$) in $\mathcal{C}^\infty$ (see for instance \cite{Sch96}). It is straightforward to check that $K$ is periodic, bounded in $L^1$, and divergence free. Finally, Proposition~2 of \cite{JW18} yields the existence of $V\in L^\infty$ such that $K=\nabla\cdot V$. As a consequence, Assumption~\ref{assumption_K} holds in the case of the periodized Biot-Savart kernel.

\begin{remark}
Notice that, for the Biot-Savart kernel on the whole space $\mathbb{R}^2$ $$\tilde{K}(x)=\frac{1}{2\pi}\frac{x^{\perp}}{|x|^2},$$  the matrix field $\tilde{V}$ such that $\tilde{K}=\nabla\cdot\tilde{V}$ can be chosen explicitly 
$$V(x)=\frac{1}{2\pi}\left(\begin{array}{cc}-\arctan\left(\frac{x_1}{x_2}\right)& 0\\ 0&\arctan\left(\frac{x_2}{x_1}\right)\end{array}\right).$$
\end{remark}

One could also consider collision-like interactions, as mentioned in \cite{JW18}. Let ${\phi\in L^1}$ be a function on the torus, $M$ be a smooth antisymmetric matrix field and consider the kernel ${K=\nabla\cdot(M\mathds{1}_{\phi(x)\leq0})}$. By construction, $K$ is the divergence of a $L^\infty$ matrix field, and since $M$ is antisymmetric $K$ is divergence free.

\begin{example}
Consider in dimension 2 the function $\phi:x\mapsto |x|^2-(2R)^2$  for a given radius $R>0$ and the matrix $$M=\left(\begin{array}{cc}0&-1\\1&0\end{array}\right),$$ which yield $$K(x)=2x^\perp\delta_{\phi(x)=0}.$$ This interaction kernel models particles, seen as balls of radius $R$, \modif{interacting via some form of collision}.
\end{example}

The well-posedness of the equations \eqref{o_Liou} and \eqref{part_liou} under Assumptions \ref{assumption_rho_0} and \ref{assumption_K} will be discussed respectively in Sections~\ref{sec:resultsPDE} and \ref{sec:nonsmoothrhon}. In particular we will see in Theorem~\ref{exist} that $\bar\rho_t$ is in $\mathcal C_\lambda^\infty(\mathbb{R}^+\times\mathds{T}^d)$.

The comparison between the law of the system of $N$ interacting particles and the law of $N$ independent particles satisfying the non-linear equation \eqref{o_Lang} is stated in terms of relative entropy. 


\begin{definition} 
Let $\mu$ and $\nu$ be two probability \modif{densities} on $\mathds{T}^{dN}$. We consider the rescaled relative entropy
\begin{equation}\label{rel_ent_gen}
\mathcal{H}_N(\nu,\mu)=\left\{
\begin{array}{ll}
\frac{1}{N}\mathbb{E}_\mu\left(\modif{\frac{\nu}{\mu}\log\frac{\nu}{\mu}}\right)\text{ if }\nu\ll\mu,\\
+\infty\text{ otherwise.}
\end{array}
\right.
\end{equation}
\end{definition}

\modif{For the sake of conciseness, for all $k\in\mathbb N$ and $t\geq 0$, we denote $\rho_N(t):\textbf{x}\in\mathds{T}^{dN}\mapsto\rho_N(t,\textbf{x})$ and $\bar{\rho}_N(t):\textbf{x}\in\mathds{T}^{dN}\mapsto\bar{\rho}_t^{\otimes N}(\textbf{x})$.} The main result is the following

\begin{theorem}\label{thm_prop}
Under Assumptions \ref{assumption_rho_0} and \ref{assumption_K}, there are constants $C_1$, $C_2$ and $C_3$ such that for all $N\in\mathbb N$ and all exchangeable density probability $\rho_N(0) \in \mathcal C_{>0}^\infty(\mathds{T}^{dN})$ there exists a weak solution $\rho_N$ of \eqref{part_liou} such that for all $t\geq0$
\begin{equation}
\mathcal{H}_N(\rho_N(t),\bar{\rho}_N(t))\leq C_1e^{-C_2t}\mathcal{H}_N(\rho_N(0),\bar{\rho}_N(0))+\frac{C_3}{N}
\end{equation}
\end{theorem}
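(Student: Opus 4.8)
The plan is to run a Grönwall argument on $t\mapsto \mathcal H_N(\rho_N(t),\bar\rho_N(t))$, combining the entropy dissipation produced by the noise with the time-uniform logarithmic Sobolev inequality of Section~\ref{Preliminary_work}, and controlling the interaction mismatch by the Jabin--Wang integration-by-parts and large deviation estimates, while checking that all constants are uniform in time. Because $K$ is singular one first constructs the weak solution $\rho_N$ by approximation: replace $K$ by a mollification $K_\varepsilon$ (still bounded in $L^1$, divergence free, and equal to $\nabla\cdot V_\varepsilon$ with $\|V_\varepsilon\|_\infty\le\|V\|_\infty$), for which \eqref{part_liou} has a classical smooth positive solution $\rho_N^\varepsilon$ on $(0,\infty)\times\mathds{T}^{dN}$ by parabolic regularity; one proves the a priori estimate for $\rho_N^\varepsilon$ with constants not depending on $\varepsilon$ and passes to the limit $\varepsilon\to0$ using weak compactness and the lower semicontinuity of relative entropy (this is Section~\ref{sec:nonsmoothrhon}). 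So one may assume $\rho_N$ smooth and positive. With $b_i=\tfrac1N\sum_jK(x_i-x_j)$ and $\bar b_i=K*\bar\rho_t(x_i)$, the standard relative-entropy computation for two Fokker--Planck evolutions gives
\[
\frac{d}{dt}\mathcal H_N(\rho_N(t),\bar\rho_N(t))=-\mathcal I_N\big(\rho_N(t)\,|\,\bar\rho_N(t)\big)+\frac1N\int_{\mathds{T}^{dN}}\rho_N\sum_{i=1}^N(b_i-\bar b_i)\cdot\nabla_{x_i}\log\frac{\rho_N}{\bar\rho_N}\,dx ,
\]
where $\mathcal I_N(\nu\,|\,\mu)=\tfrac1N\int_{\mathds{T}^{dN}}\nu\,|\nabla\log(\nu/\mu)|^2\,dx$ and $b_i-\bar b_i=\tfrac1N\sum_j\big(K(x_i-x_j)-K*\bar\rho_t(x_i)\big)$.

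The error term is the heart of the matter. Since $K\notin L^2$ one cannot use Young's inequality directly; instead, using $K=\nabla\cdot V$ and setting $\Xi(x,y)=V(x-y)-V*\bar\rho_t(x)$ — which lies in $L^\infty$ with $\|\Xi\|_\infty\le2\|V\|_\infty$ and satisfies $\int\Xi(x,y)\bar\rho_t(y)\,dy=0$ — an integration by parts in $x_i$ writes $b_i-\bar b_i=\nabla_{x_i}\cdot\Xi(x_i,\cdot)$ and moves one derivative onto $\rho_N\nabla_{x_i}\log(\rho_N/\bar\rho_N)=\nabla_{x_i}\rho_N-\rho_N\nabla_{x_i}\log\bar\rho_N$. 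The worst-looking piece, the one carrying a factor $\nabla_{x_i}\otimes\nabla_{x_i}\rho_N$, is then discarded using $\nabla\cdot K=0$: its coefficient is the double divergence of $\Xi$, equal to $(\nabla\cdot K)(x_i-x_j)-(\nabla\cdot K)*\bar\rho_t(x_i)=0$. What remains is a finite sum of terms in which every factor is either $\nabla_{x_i}\log(\rho_N/\bar\rho_N)$, an entry of $\Xi(x_i,x_j)$ (bounded by $2\|V\|_\infty$, centered in $x_j$), or an entry of $\nabla\log\bar\rho_t$ or $\nabla^2\log\bar\rho_t$ (bounded uniformly in $t$ by the density estimates of Section~\ref{Preliminary_work}). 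Grouping the terms carrying $\nabla_{x_i}\log(\rho_N/\bar\rho_N)$ and using Young's inequality, the error term is bounded by $\tfrac14\mathcal I_N$ plus expressions of the form $C(t)\,\tfrac1N\int\rho_N\sum_i|A_i|^2\,dx$ with $A_i=\tfrac1N\sum_j\Xi(x_i,x_j)$, and $\int\rho_N\,\tfrac1{N^2}\sum_{i,j}\psi_t(x_i,x_j)\,dx$ with $\psi_t$ centered in the second variable, where $C(t)$ and $\|\psi_t\|_\infty$ are $\le C(\|\nabla\log\bar\rho_t\|_\infty^2+\|\nabla^2\log\bar\rho_t\|_\infty)$. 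Each such expression is controlled under $\rho_N$ by the Gibbs (Donsker--Varadhan) variational inequality against $\bar\rho_N(t)$ together with the Jabin--Wang law-of-large-numbers bound $\int\bar\rho_N(t)\exp(\tfrac\kappa N|\sum_{i,j}\psi_t(x_i,x_j)|)\le C$, valid for $\kappa\|\psi_t\|_\infty$ small enough uniformly in $N$ since $\bar\rho_t\in\mathcal C^\infty_\lambda$ with constants uniform in $t$; choosing the Gibbs/LLN parameters appropriately one obtains
\[
\frac{d}{dt}\mathcal H_N(t)\le-\tfrac34\,\mathcal I_N\big(\rho_N(t)\,|\,\bar\rho_N(t)\big)+D(t)\,\mathcal H_N(t)+\frac{D(t)}N,\qquad D(t)\le C\big(\|\nabla\log\bar\rho_t\|_\infty^2+\|\nabla^2\log\bar\rho_t\|_\infty\big).
\]

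The loop is closed with two inputs from Section~\ref{Preliminary_work}. First, $\bar\rho_t$ satisfies a logarithmic Sobolev inequality with a constant $\lambda_{\mathrm{LS}}>0$ independent of $t$; by tensorization so does $\bar\rho_N(t)=\bar\rho_t^{\otimes N}$, whence $\mathcal I_N(\rho_N\,|\,\bar\rho_N)\ge2\lambda_{\mathrm{LS}}\,\mathcal H_N(\rho_N,\bar\rho_N)$ uniformly in $N$ and $t$. Second, the nonlinear flow relaxes to the uniform measure: since $\nabla\cdot K=0$, one has $\tfrac{d}{dt}\mathrm{Ent}(\bar\rho_t\,|\,1)=-I(\bar\rho_t\,|\,1)\le-2\lambda_{\mathrm{LS}}\,\mathrm{Ent}(\bar\rho_t\,|\,1)$, so $\mathrm{Ent}(\bar\rho_t\,|\,1)$ decays exponentially and, by the uniform higher-order bounds together with interpolation, so do $\|\nabla\log\bar\rho_t\|_\infty$ and $\|\nabla^2\log\bar\rho_t\|_\infty$; hence $D(t)\to0$ exponentially and $M:=\int_0^\infty D(s)\,ds<\infty$. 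Plugging the logarithmic Sobolev inequality into the differential inequality gives $\tfrac{d}{dt}\mathcal H_N(t)\le-\big(\tfrac32\lambda_{\mathrm{LS}}-D(t)\big)\mathcal H_N(t)+D(t)/N$, and the integrating factor $\exp\!\big(\int_0^t(\tfrac32\lambda_{\mathrm{LS}}-D)\big)$ yields $\mathcal H_N(t)\le e^{M}e^{-\lambda_{\mathrm{LS}}t}\mathcal H_N(0)+e^{M}M/N$, which is the claim.

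The delicate step is the treatment of the singular error term: carrying out the Jabin--Wang integration by parts so that the singular kernel is entirely absorbed into the bounded field $V$, using $\nabla\cdot K=0$ to discard the second-order-in-$\rho_N$ term, and invoking the large deviation estimates — all while keeping every constant uniform in $t$ and ensuring that the prefactors of the non-dissipative terms are either $O(1/N)$ or decay as $t\to\infty$. It is this last point, rather than mere uniform boundedness of $\bar\rho_t$, that lets the Grönwall argument close with time-uniform constants.
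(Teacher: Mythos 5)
Your proposal is correct and its overall architecture is the same as the paper's: mollify $K$ to construct $\rho_N$ and pass to the limit by weak-$*$ compactness together with the variational (lower-semicontinuity) characterization of the entropy, as in Section~\ref{sec:nonsmoothrhon}; compute the entropy dissipation and use $K=\nabla\cdot V$ with $\nabla\cdot K=0$ to recast the singular error term in terms of the bounded field $V$ (Lemma~\ref{lem:dissipation}); control the two resulting error terms by the change-of-measure Lemma~\ref{changement_de_loi} combined with the Jabin--Wang exponential estimates (for the squared term $|A_i|^2$ what is actually needed is Theorem~\ref{borner_A}, which requires centering only in the second variable, rather than the doubly-centered Theorem~\ref{borner_B} that your blanket ``LLN bound'' suggests --- both are available, so this is only an imprecision); and close via the tensorized, time-uniform logarithmic Sobolev inequality of Corollary~\ref{coro_logsob} and Gr\"onwall. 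The one genuinely different step is how you neutralize the time-dependent coefficient $D(t)$: you prove exponential decay of $\mathrm{Ent}(\bar\rho_t\,|\,u)$ using the divergence-free drift and the LSI of the uniform measure, and then upgrade it by interpolation against the uniform derivative bounds to exponential decay of $\|\nabla\log\bar\rho_t\|_{L^\infty}$ and $\|\nabla^2\log\bar\rho_t\|_{L^\infty}$, so that $\int_0^\infty D(s)\,ds<\infty$. The paper instead uses the time-integrated estimates $\int_0^t\|\nabla^n\bar\rho_s\|_{L^\infty}^2\,ds\le C_n^\infty$ of Lemma~\ref{Bornes_derivees} (plus a Young inequality to make the Hessian coefficient quadratic), which gives the required integrability directly, without any decay statement. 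Your route is valid provided the Gagliardo--Nirenberg/Landau-type interpolation step is written out, and it yields the extra information that the perturbation decays in time; but it does not bypass the paper's main technical input: both closures rest on the uniform-in-time bounds $\|\nabla^n\bar\rho_t\|_{L^\infty}\le C_n^u$ and $1/\lambda\le\bar\rho_t\le\lambda$ (needed also to convert $\nabla\log\bar\rho_t$ and $\nabla^2\log\bar\rho_t$ into derivative bounds on $\bar\rho_t$), which you take as given from Section~\ref{Preliminary_work}, where they are indeed established.
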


In particular, if $\rho_N(0)=\bar\rho_N(0)$, the first term of the right-hand side vanishes, and this property has been called entropic propagation of chaos, see for example \cite{HM14}.

%
%

\subsection{Strong propagation of chaos}

We show that Theorem~\ref{thm_prop} yields strong propagation of chaos, uniform in time. For $\mu$ and $\nu$  two probability measures on $\mathds{T}^{dk}$, denote by $\Pi\left(\mu,\nu\right)$ the set of couplings of $\mu$ and $\nu$, i.e. the set of probability measures $\Gamma$ on $\mathds{T}^{dk}\times\mathds{T}^{dk}$ with $\Gamma(A\times \mathds{T}^{dk}) = \mu(A)$ and $\Gamma(\mathds{T}^{dk}\times A ) = \nu(A)$ for all Borel set $A$ of $\mathds{T}^{dk}$. Let us define the usual $L^2$-Wasserstein distance by 
\begin{align*}
\mathcal{W}_2\left(\mu,\nu\right)=\left(\inf_{\Gamma\in\Pi\left(\mu,\nu\right)}\int_{\mathds{T}^{dk}}d_{\mathds{T}^{dk}}(x,y)^2\Gamma\left(dxdy \right)\right)^{1/2},
\end{align*}
where $d_{\mathds{T}^{dk}}$ is the usual distance on the torus. For $\textbf{x}=(x_i)_{i\in\llbracket 1,N\rrbracket} \in \mathds{T}^{dN}$, we write $\pi(\textbf{x})=\frac1N\sum_{i=1}^N \delta_{x_i}$ the associated empirical measure.


\begin{corollary}\label{coro_wass}
Under assumptions \ref{assumption_rho_0} and \ref{assumption_K}, assuming moreover that $\rho_N(0)=\bar\rho_N(0)$, there is a constant $C$ such that for all $k\leq N \in \mathbb N$ and all $t\geq 0$, 
\begin{equation*}\|\rho_N^k(t) -\bar \rho_k(t)\|_{L^1} + 
\mathcal{W}_2\left(\rho_{N}^{k}(t),\bar{\rho}_k(t)\right)\leq C\left(\left\lfloor\frac{N}{k}\right\rfloor\right)^{-\frac{1}{2}}
\end{equation*}
and
\[\mathbb E_{\rho_N(t)} \left(\mathcal W_2(\pi(\textbf{X}),\modif{\bar\rho_t})\right) \leqslant C \alpha(N)\]
where $\alpha(N)=N^{-1/2}\ln(1+N)$ if $d=2$ and $\alpha(N)=N^{-1/d}$ if $d>2$.
\end{corollary}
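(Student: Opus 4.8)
The plan is to push the entropic estimate of Theorem~\ref{thm_prop} down to the $k$-marginals and to the empirical measure, using three standard tools: sub-additivity of relative entropy with respect to a product reference measure, the Csisz\'ar--Kullback--Pinsker inequality, and the $T_2$ transport inequality that follows from the time-uniform logarithmic Sobolev inequality proved in Section~\ref{Preliminary_work}. Write $H(\cdot|\cdot)$ for the non-rescaled relative entropy, so that $\mathcal H_N(\mu,\nu)=\tfrac1N H(\mu|\nu)$. First, since $\rho_N(0)=\bar\rho_N(0)$, Theorem~\ref{thm_prop} gives $H(\rho_N(t)|\bar\rho_N(t))=N\mathcal H_N(\rho_N(t),\bar\rho_N(t))\le C_3$ for all $t\ge0$. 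Because $\bar\rho_N(t)=\bar\rho(t)^{\otimes N}$ is a product measure, relative entropy is super-additive over disjoint blocks of coordinates; partitioning $\{1,\dots,N\}$ into $q:=\lfloor N/k\rfloor$ blocks of size $k$ and using exchangeability of $\rho_N(t)$ yields
\[
H(\rho_N^k(t)\,|\,\bar\rho_k(t))\ \le\ \tfrac1q\,H(\rho_N(t)\,|\,\bar\rho_N(t))\ \le\ \tfrac{C_3}{q}\,.
\]

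From this entropy bound I would read off the two distances on $\mathds{T}^{dk}$. The Csisz\'ar--Kullback--Pinsker inequality gives $\|\rho_N^k(t)-\bar\rho_k(t)\|_{L^1}^2\le 2\,H(\rho_N^k(t)|\bar\rho_k(t))\le 2C_3/q$. For the Wasserstein term, the logarithmic Sobolev inequality satisfied by $\bar\rho(t)$ with a constant independent of $t$ (Section~\ref{Preliminary_work}) implies, by the Otto--Villani theorem, a $T_2$ transport inequality for $\bar\rho(t)$ with a constant $C_T$ independent of $t$; since $T_2$ tensorizes, $\bar\rho_k(t)=\bar\rho(t)^{\otimes k}$ satisfies $T_2$ on $(\mathds{T}^{dk},d_{\mathds{T}^{dk}})$ with the \emph{same} constant $C_T$, whence $\mathcal W_2^2(\rho_N^k(t),\bar\rho_k(t))\le C_T\,H(\rho_N^k(t)|\bar\rho_k(t))\le C_TC_3/q$. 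Adding the two estimates gives the first displayed inequality, with $C=\sqrt{2C_3}+\sqrt{C_TC_3}$.

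For the empirical measure I would fix $t$ and couple $\mathbf X\sim\rho_N(t)$ with $\bar{\mathbf X}=(\bar X^1,\dots,\bar X^N)$ i.i.d.\ of law $\bar\rho(t)$ through an optimal coupling for $\mathcal W_2$ on $(\mathds{T}^{dN},d_{\mathds{T}^{dN}})$. Matching the atoms one-to-one gives $\mathcal W_2^2(\pi(\mathbf X),\pi(\bar{\mathbf X}))\le\tfrac1N\sum_{i=1}^N d_{\mathds{T}^d}(X^i,\bar X^i)^2$, so by the triangle inequality, Jensen's inequality, the $T_2$ inequality for $\bar\rho_N(t)$ (again with the uniform constant $C_T$, by tensorization) and Theorem~\ref{thm_prop},
\[
\mathbb E_{\rho_N(t)}\!\big[\mathcal W_2(\pi(\mathbf X),\bar\rho(t))\big]\ \le\ \big(\tfrac1N\mathcal W_2^2(\rho_N(t),\bar\rho_N(t))\big)^{1/2}+\mathbb E\big[\mathcal W_2(\pi(\bar{\mathbf X}),\bar\rho(t))\big]\ \le\ \sqrt{C_TC_3}\,N^{-1/2}+\mathbb E\big[\mathcal W_2(\pi(\bar{\mathbf X}),\bar\rho(t))\big].
\]
The last term is the error of the empirical measure of $N$ i.i.d.\ samples of a law whose density is bounded on the compact torus uniformly in $t$ (since $\bar\rho(t)\in\mathcal C^\infty_\lambda$), which by a classical quantitative law of large numbers for empirical measures in Wasserstein distance (Fournier--Guillin type bounds) is at most $C\alpha(N)$ with $C$ independent of $t$. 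As $N^{-1/2}\le\alpha(N)$ for $d\ge2$, the second inequality follows.

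The point requiring care is that the transport inequality be uniform \emph{both} in time and in dimension: time-uniformity is precisely the logarithmic Sobolev inequality of Section~\ref{Preliminary_work}, while dimension-uniformity — so that $C_T$ does not deteriorate as $k$ or $N$ grows — is the tensorization property of $T_2$. One must also invoke the sub-additivity lemma in its sharp form, with the floor $\lfloor N/k\rfloor$ and relying on exchangeability, rather than a naive factor $N/k$; beyond these points, the argument is a routine chain of inequalities.
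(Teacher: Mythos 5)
Your proof is correct and follows essentially the same route as the paper: sub-additivity of relative entropy (with the floor $\lfloor N/k\rfloor$ via exchangeability), Pinsker, and the Otto--Villani $T_2$ inequality deduced from the time-uniform logarithmic Sobolev inequality for the marginal bound, then the one-to-one matching coupling of empirical measures plus the Fournier--Guillin i.i.d.\ estimate for the second bound. The only cosmetic difference is that you tensorize $T_2$ directly, while the paper tensorizes the LSI (Corollary~\ref{coro_logsob}) before invoking Otto--Villani, which is equivalent.
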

As shown in \cite{BGV07}, the last result yields confidence interval in uniform norm when estimating  $\bar\rho_t$ with $\pi(\textbf{X}_t^N)$ convoluted to a smooth kernel.

We postpone the proof as it will rely on results shown later. It will however be a direct corollary of Theorem~\ref{thm_prop} and of the  logarithmic Sobolev inequality proven in Corollary~\ref{coro_logsob}, which is a crucial ingredient in the proof of  Theorem~\ref{thm_prop}.

%
%

\section{Preliminary work}\label{Preliminary_work}

%
%

\subsection{First results on the non-linear PDE}\label{sec:resultsPDE}

We have the following result concerning the solution of \eqref{o_Liou}.

\begin{theorem}\label{exist}
Under Assumption~\ref{assumption_K}, let $\mu_0\in\mathcal{C}_\lambda^\infty(\mathds{T}^d)$. Then the system
\begin{equation}\label{PDE_vortex}
\left\{
    \begin{array}{ll}
        \partial_t\bar{\rho}_t=-\nabla\cdot\left(\left(K\ast\bar{\rho}_t\right)\bar{\rho}_t\right)+\Delta\bar{\rho}_t, \text{ in } \mathbb{R}^+\times\mathds{T}^d \\
        \bar{\rho}_0=\mu_0,
    \end{array}
\right.
\end{equation}
has, in the class of bounded solutions, a unique solution $\bar{\rho}(t,x)\in\mathcal{C}_\lambda^\infty(\mathbb{R}^+\times\mathds{T}^d)$.
\end{theorem}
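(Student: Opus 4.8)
The plan is to establish existence, uniqueness, smoothness, and the two-sided bound $\frac1\lambda\le\bar\rho_t\le\lambda$ by combining standard parabolic theory with the special structure provided by Assumption~\ref{assumption_K}, namely that $K\in L^1$, $\nabla\cdot K=0$, and $K=\nabla\cdot V$ with $V\in L^\infty$.

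First I would set up a fixed-point / continuation scheme for local existence. For a fixed velocity field $b(t,x)=K\ast\mu_t$ with $\mu$ in a suitable space (say $C([0,T],L^2)$ with a uniform mass and $L^\infty$ bound), the linear equation $\partial_t\rho=-\nabla\cdot(b\rho)+\Delta\rho=\Delta\rho-b\cdot\nabla\rho$ (using $\nabla\cdot K=0$, hence $\nabla\cdot b=0$, via the Remark after Assumption~\ref{assumption_K}) is a linear parabolic equation with a divergence-free, merely bounded drift. Since $K\in L^1$ and $\mu_t$ is a probability density, $\|b_t\|_{L^\infty}\le \|K\|_{L^1}\|\mu_t\|_{L^\infty}$, so one controls the drift from the $L^\infty$ bound on $\mu$. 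By the Banach fixed point theorem on a short time interval (using heat-semigroup estimates / Duhamel and the smoothing of $e^{t\Delta}$ on the torus to absorb the transport term), one gets a local-in-time solution; propagation of the initial mass and positivity are automatic (the equation is in conservative form with a smooth enough — after one step of bootstrap — drift, so the maximum principle and conservation of $\int\bar\rho$ apply).

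Next I would bootstrap regularity: once $\bar\rho$ is a bounded weak solution on $(0,T)\times\mathds{T}^d$, the drift $b=K\ast\bar\rho$ is bounded; Schauder/$L^p$ parabolic regularity upgrades $\bar\rho$ to Hölder, then $b$ becomes Hölder (in fact $C^\infty$ in $x$: convolution with $K$ against a $C^\infty_x$ density, or differentiate the equation and use the $C^0_n$ bounds on $\mu_0$ from Assumption~\ref{assumption_rho_0} together with the mild formulation to get $\|\nabla^n\bar\rho_t\|_{L^\infty}$ finite for each $n$), and inductively $\bar\rho\in C^\infty$ on $(0,T)\times\mathds{T}^d$, up to $t=0$ because $\mu_0\in C^\infty_\lambda$. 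Uniqueness in the class of bounded solutions follows by a Gronwall/$L^2$ energy estimate on the difference of two solutions $\rho^1,\rho^2$: writing the equation for $w=\rho^1-\rho^2$, the transport term $b^1\cdot\nabla\rho^1 - b^2\cdot\nabla\rho^2$ splits into $b^1\cdot\nabla w$ (which vanishes in the $L^2$ estimate after integration by parts since $\nabla\cdot b^1=0$) and $(b^1-b^2)\cdot\nabla\rho^2=(K\ast w)\cdot\nabla\rho^2$, controlled by $\|K\|_{L^1}\|w\|_{L^2}\|\nabla\rho^2\|_{L^\infty}$, giving $\frac{d}{dt}\|w\|_{L^2}^2\le C\|w\|_{L^2}^2$ and hence $w\equiv0$.

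The main obstacle — and the point where Assumption~\ref{assumption_K} really earns its keep — is obtaining the time-uniform two-sided bound $\frac1\lambda\le\bar\rho_t\le\lambda$, i.e. preventing the $L^\infty$ and $L^\infty$-of-the-reciprocal norms from growing in $t$. The naive maximum principle only gives $\|\bar\rho_t\|_{L^\infty}\le\|\mu_0\|_{L^\infty}$ when the drift is divergence free, so in fact the bound $\bar\rho_t\le\lambda$ (and likewise $\bar\rho_t\ge1/\lambda$ by the same argument applied to the equation from below, again using $\nabla\cdot b=0$) is a genuine maximum principle statement: for a solution of $\partial_t\rho=\Delta\rho-b\cdot\nabla\rho$ with $\nabla\cdot b=0$, any $L^p$ norm is non-increasing, and passing $p\to\infty$ gives $\|\rho_t\|_{L^\infty}\le\|\rho_0\|_{L^\infty}$ and $\mathrm{ess\,inf}\,\rho_t\ge\mathrm{ess\,inf}\,\rho_0$. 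The role of $K=\nabla\cdot V$ with $V\in L^\infty$ is more subtle and will be used in the companion density-derivative bounds (and the logarithmic Sobolev inequality) rather than for this theorem per se, but I would invoke it if needed to make sense of the drift term $\nabla\cdot(K\ast\bar\rho\,\bar\rho)$ in the sense of distributions before the regularity bootstrap kicks in; concretely, $\nabla\cdot((K\ast\bar\rho)\bar\rho)=\nabla\cdot((\nabla\cdot V)\ast\bar\rho\,\bar\rho)$ can be integrated by parts once against test functions using only $V\in L^\infty$ and $\bar\rho\in L^\infty$, which is what legitimizes the weak formulation and the energy estimates above at the lowest regularity level. Once this is in place, the continuation criterion ``$\|\bar\rho_t\|_{L^\infty}$ stays bounded'' is met uniformly, so the local solution extends to all $t\ge0$ with the stated bounds, completing the proof.
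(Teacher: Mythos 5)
Your proposal follows essentially the same route as the paper: a heat-kernel (Duhamel) fixed point/iteration for local existence, a bootstrap of regularity exploiting $K=\nabla\cdot V$ with $V\in L^\infty$ to transfer derivatives from $\bar\rho$ to the drift $K\ast\bar\rho$, the divergence-free structure for the two-sided bound, and continuation using that the local existence time depends only on $\|\mu_0\|_{L^\infty}$; your maximum/minimum-principle argument for $\frac1\lambda\le\bar\rho_t\le\lambda$ is the deterministic counterpart of the paper's stochastic representation $\bar\rho(t,x)=\mathbb{E}_x\left(\bar\rho_0(Z_t)\right)$ along the time-reversed SDE with drift $-K\ast\bar\rho_{t-s}$, so the substance is identical. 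One point to tighten: your uniqueness step, as written, bounds the cross term by $\|K\|_{L^1}\|w\|_{L^2}\|\nabla\rho^2\|_{L^\infty}$, which is vacuous for two arbitrary \emph{bounded} solutions since neither need have a bounded gradient; to get uniqueness in the stated class you must take $\rho^2$ to be the constructed smooth solution and compare an arbitrary bounded weak solution $\rho^1$ against it, and you then still have to justify the $L^2$ energy identity (and the vanishing of $\int w\,b^1\cdot\nabla w$) for the merely bounded competitor, e.g.\ via a Caccioppoli estimate giving $\nabla\rho^1\in L^2_{\mathrm{loc}}$. The paper sidesteps this entirely by running the Gronwall argument on the mild formulation, using only $\|\nabla G(t,\cdot)\|_{L^1}\le \beta t^{-1/2}$ and the $L^\infty$ bounds on both solutions, which is why its uniqueness genuinely holds in the class of bounded solutions; with that (standard) repair your argument is fine.
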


\begin{proof}
The first part of the theorem (existence, uniqueness and smoothness) can be proven by following closely the proof done by Ben-Artzi in \cite{Ben94}. For the sake of completeness, this is detailed in Appendix~\ref{Appendice_reg}. Note that a similar result has also been recently proven in \cite{Wyn23}, where the $\mathcal{C}^k$ regularity of $\bar{\rho}_t$ for any given $k$ and any given $t$ is shown. The proof relies heavily on the fact that the kernel $K$ is divergence free, that the convolution operation tends to keep the regularity of the most regular term, and that the Fokker-Planck equation has a smoothing effect.

Let us now prove the second part of the result, namely the time uniform bounds on $\bar{\rho}_t$. Assume that $\mu_0\in\mathcal{C}_\lambda^\infty(\mathds{T}^d)$, which by definition implies $\frac{1}{\lambda}\leq\mu_0\leq \lambda$,
and consider  $\bar{\rho}_t$ the unique solution of \eqref{PDE_vortex}.
We start by proving that $K\ast\bar{\rho}_t$ is in $\mathcal{C}^\infty$. By definition
\begin{align*}
K\ast\bar{\rho}_t(x)=\int_{\mathds{T}^d}K(x-y)\bar{\rho}_t(y)dy=-\int_{\mathds{T}^d}K(y)\bar{\rho}_t(x-y)dy.
\end{align*}
Then
\begin{align*}
K\ast\bar{\rho}_t(x)=-\int_{\mathds{T}^d}\nabla\cdot V(y)\bar{\rho}_t(x-y)dy=-\int_{\mathds{T}^d}V(y)\nabla_y\bar{\rho}_t(x-y)dy.
\end{align*}
Since   $V\in L^{\infty}(\mathds{T}^d)$ and  $\bar{\rho}\in\mathcal{C}^\infty(\mathbb{R}^+\times\mathds{T}^d)$, we   easily deduce that $K\ast \bar{\rho}$, as well as all its derivatives, are Lipschitz continuous on $[0,T]\times\mathds{T}^d$ for all $T>0$. Hence $K\ast \bar{\rho}$ is $\mathcal{C}^\infty$. Moreover, using that $\nabla\cdot K=0$ (in the sense of distribution), we immediately get that $\nabla \cdot (K \ast \bar \rho_t) = 0$ for all $t\geqslant 0$.

For $t\geqslant 0$ and $x\in\mathds{T}^d$, 
consider $Z_s$ the  strong solution of the following stochastic differential equation for $s\in[0,t]$
\begin{equation*}
 dZ_s=\sqrt{2}dB_s-K\ast\bar{\rho}_{t-s}(Z_s)ds,\ Z_0=x
\end{equation*}
which exists, is unique  and non-explosive since $K\ast \bar\rho_{t-s}$ is smooth and bounded. Then 
\[\bar{\rho}(t,x) = \mathbb E_x\left( \bar\rho_0(Z_t)\right).\]
The bounds on $\bar\rho_t$ follow.
\end{proof}

%
%

\subsection{Higher order estimates}

We have already established that $\bar{\rho}_t$ is bounded uniformly in time. In this section, we extend this result to all its derivatives.

\begin{lemma}\label{Bornes_derivees}
For all $n\geqslant 1$ and $\ \alpha_1,...,\alpha_n\in\llbracket1,d\rrbracket$, there exist $ C_n^u, C_n^\infty>0$ such that for all $t\geqslant 0$,
\begin{equation*}
\|\partial_{\alpha_1,...,\alpha_n}\bar{\rho}_t\|_{L^\infty}\leq C_n^u \quad\text{ and }\quad \int_0^t\|\partial_{\alpha_1,...,\alpha_n}\bar{\rho}_s\|^2_{L^\infty} ds\leq C_n^\infty
\end{equation*}
\end{lemma}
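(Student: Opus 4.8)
The plan is to run a bootstrap/induction on the order $n$ of differentiation, in the spirit of the representation formula already used in the proof of Theorem~\ref{exist}. The key observation is that if we set $g = K\ast\bar\rho_t$, then $g$ is smooth, bounded, and divergence free uniformly in time, and by the Remark following Assumption~\ref{assumption_K} the transport term can be written $\nabla\cdot(g\bar\rho_t) = (g\cdot\nabla)\bar\rho_t$. Hence \eqref{PDE_vortex} is the linear Fokker--Planck equation $\partial_t\bar\rho_t = \Delta\bar\rho_t - (g\cdot\nabla)\bar\rho_t$ with a smooth uniformly-bounded drift. Differentiating this equation $n$ times, the highest-order derivative $w:=\partial_{\alpha_1,\dots,\alpha_n}\bar\rho_t$ satisfies an equation of the form $\partial_t w = \Delta w - (g\cdot\nabla)w + R_n$, where the remainder $R_n$ is a finite sum of products of derivatives of $g$ (equivalently, of $\bar\rho$, via the $V\in L^\infty$ representation of $K$, so $\nabla^j g$ is controlled by $\nabla^{j}\bar\rho$ convolved against $V$) with derivatives of $\bar\rho$ of order strictly less than $n$, all of which are already controlled by the induction hypothesis together with the base case $n=0$ provided by Theorem~\ref{exist}.

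\textbf{Main steps.} First I would establish the uniform bound $\|g\|_{L^\infty}+\|\nabla^j g\|_{L^\infty}\leq \tilde C_j$ for all $j$ and all $t$: writing $g(x) = -\int_{\mathds{T}^d} V(y)\nabla_y\bar\rho_t(x-y)dy$ one gets $\nabla^j g = -\int V(y)\nabla^{j+1}\bar\rho_t(x-y)dy$, so $\|\nabla^j g\|_{L^\infty}\leq \|V\|_{L^1}\|\nabla^{j+1}\bar\rho_t\|_{L^\infty}$, which will be available from the $(n+1)$-level of the induction — so one must be slightly careful and instead bound $\nabla^j g$ by $\|V\|_{L^\infty}$ times a suitable $L^1$ norm of $\nabla^{j+1}\bar\rho_t$, or simply carry the induction on $\bar\rho$ alone and recover $g$ at the end; I would phrase the induction hypothesis as ``$\|\nabla^m\bar\rho_t\|_{L^\infty}\leq C_m^u$ for all $m\leq n$ and all $t$'' and prove the step for $n+1$, using $\nabla^{n+1}g$ controlled by $\|V\|_\infty\cdot\|\nabla^{n+2}\bar\rho_t\|$-type quantities only through the error term $R_{n+1}$ which involves derivatives of $g$ of order at most $n$, hence derivatives of $\bar\rho$ of order at most $n+1$ — wait, this closes because the top-order-in-$R$ piece $(\nabla g\cdot\nabla)\nabla^n\bar\rho$ has $\nabla g$ of order $1$ (controlled) times $\nabla^{n+1}\bar\rho$, which is exactly $w$ again; so in fact $R_{n+1}$ contains a term linear in $w$, which simply modifies the zero-order part of the linear equation for $w$ and is harmless. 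Second, for the fixed-order equation $\partial_t w = \Delta w - (g\cdot\nabla)w + \tilde R$, with $\tilde R$ now a uniformly bounded source (by induction) possibly plus a bounded-coefficient lower-order-in-$w$ term, I would use the probabilistic (Feynman--Kac / Duhamel) representation: with $Z_s$ solving $dZ_s = \sqrt 2\,dB_s - g(t-s,Z_s)\,ds$ started at $x$,
\[
w(t,x) = \mathbb{E}_x\Big( w(0,Z_t) + \int_0^t \tilde R(t-s,Z_s)\,ds\Big),
\]
which immediately gives $\|w(t)\|_{L^\infty}\leq \|\nabla^n\bar\rho_0\|_{L^\infty} + t\sup_{s\leq t}\|\tilde R(s)\|_{L^\infty}$. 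This alone is not uniform in time, so the crucial point is to upgrade it.

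\textbf{Getting time-uniformity --- the main obstacle.} The real difficulty is that the naive Duhamel bound grows linearly in $t$. To get a uniform-in-time $L^\infty$ bound and the integrated $L^2$-in-time bound, I would exploit the exponential contraction of the heat semigroup on the torus acting on mean-zero functions: note $\partial_{\alpha}\bar\rho_t$ has zero spatial average for every $\alpha$, so every $w=\nabla^n\bar\rho_t$ with $n\geq 1$ is mean-zero. Then I would run an energy estimate instead: multiply the equation for $w$ by $w$ and integrate over $\mathds{T}^d$; the transport term $\int (g\cdot\nabla)w\, w = \tfrac12\int (g\cdot\nabla)w^2 = -\tfrac12\int(\nabla\cdot g)w^2 = 0$ since $g$ is divergence free (this is precisely where divergence-freeness pays off again), the diffusion term gives $-\|\nabla w\|_{L^2}^2 \leq -c_P\|w\|_{L^2}^2$ by the Poincaré inequality on the torus (legitimate since $w$ is mean-zero), and the source term is estimated by Cauchy--Schwarz/Young against $\|\nabla^{<n}\bar\rho\|$ which are bounded by induction. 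One obtains $\tfrac{d}{dt}\|w\|_{L^2}^2 \leq -c\|w\|_{L^2}^2 + C$ (after absorbing the gradient loss), yielding by Grönwall both $\sup_t\|w(t)\|_{L^2}\leq C$ and $\int_0^t\|\nabla w\|_{L^2}^2\,ds \leq C(1+\|w(0)\|_{L^2}^2)$ uniformly; iterating this up the Sobolev scale (bounding $\|w\|_{L^\infty}$ by $\|w\|_{H^s}$ with $s>d/2$ via Sobolev embedding, and $\|w\|_{H^s}$ in turn by $L^2$ norms of $\nabla^{n}\bar\rho,\dots,\nabla^{n+s}\bar\rho$, all controlled by the same scheme at one higher order) delivers the uniform $L^\infty$ bound $C_n^u$, while the $\int_0^t\|\nabla^{n}\bar\rho_s\|_{L^\infty}^2\,ds\leq C_n^\infty$ bound comes from combining the integrated $\int\|\nabla^{n+1}\bar\rho\|_{L^2}^2$-type gains across enough orders with Sobolev embedding. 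The delicate bookkeeping is the order-by-order closing of the induction — each uniform $L^\infty$ bound at order $n$ requires an integrated-in-time $L^2$ gain at order slightly above $n$, so the induction must be set up on a bundle of norms simultaneously (say, $\sup_t\|\bar\rho_t\|_{H^{n+s}}$ together with $\int_0^t\|\nabla\bar\rho_s\|_{H^{n+s-1}}^2 ds$ for a fixed $s>d/2$), rather than on a single scalar quantity; once that is arranged everything reduces to the elementary differential inequality above, and that is the step I expect to require the most care to write cleanly.
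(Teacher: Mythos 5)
Your overall skeleton (induction on the order of differentiation, $L^2$ energy estimates where the transport term vanishes by divergence-freeness, the $V\in L^\infty$ representation to trade a derivative of $K$ for a derivative of $\bar\rho$, and Sobolev embedding to pass from $H^m$ to $L^\infty$) is the same as the paper's. But the mechanism you propose for time-uniformity has a genuine gap. You claim the differential inequality $\frac{d}{dt}\|w\|_{L^2}^2\leq -c\|w\|_{L^2}^2+C$ with $c$ a Poincar\'e constant and $C$ a constant source. This does not follow from your own description of the remainder: differentiating $n$ times produces, among others, the commutator term $(\nabla g\cdot\nabla)\nabla^{n-1}\bar\rho$ with $g=K\ast\bar\rho$, which in the energy estimate contributes a term of size $\|\nabla g\|_{L^\infty}\|w\|_{L^2}^2$, i.e.\ a term \emph{linear in $\|w\|_{L^2}^2$ with a coefficient that is only bounded, not small} (it is of order $\|K\|_{L^1}\|\nabla\bar\rho_t\|_{L^\infty}$ or $\|V\|_{L^\infty}\|\nabla^2\bar\rho_t\|_{L^1}$). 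You dismiss exactly this term as ``harmless''; it is harmless for well-posedness but not for your decay argument, since there is no reason for this coefficient to be dominated by the Poincar\'e constant of the torus. The true inequality is $\frac{d}{dt}\|w\|^2\leq (C'-c)\|w\|^2+C$ with possibly $C'>c$, and Gr\"onwall then yields a bound growing exponentially in time, so the uniform-in-time conclusion does not close as written.

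The paper avoids any decay/Poincar\'e argument altogether, and this is the idea you are missing: it runs a cascade of \emph{integrated} dissipation bounds. The seed is the exact identity $\frac12\frac{d}{dt}\|\bar\rho_t\|_{L^2}^2+\|\nabla\bar\rho_t\|_{L^2}^2=0$ (the transport term vanishes since $\nabla\cdot K=0$), which together with the uniform bound $\bar\rho_t\leq\lambda$ of Theorem~\ref{exist} gives $\int_0^\infty\|\nabla\bar\rho_s\|_{L^2}^2\,ds\leq \lambda^2/2$. Then at order $n$ the energy estimate is arranged (using $\nabla K\ast\bar\rho=V\ast\nabla^2\bar\rho$ bounded by $\|V\|_{L^\infty}$ times an $L^2$ norm, and $K\ast\bar\rho$ bounded by $\|K\|_{L^1}\lambda$) so that the right-hand side is a bounded coefficient times $\|\nabla^n\bar\rho_t\|_{L^2}^2$ \emph{whose time integral is finite by the previous step}; a direct integration in time, with no exponential factor, then yields simultaneously the uniform-in-time bound on $\|\nabla^n\bar\rho_t\|_{L^2}$ and the integrated bound on $\|\nabla^{n+1}\bar\rho_t\|_{L^2}^2$, and the induction proceeds. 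In this scheme it is irrelevant whether the coefficient multiplying $\|\nabla^n\bar\rho_t\|_{L^2}^2$ is large, which is precisely the point where your Poincar\'e-based argument breaks down. Your closing remark about bundling $\sup_t$ and $\int_0^t$ norms gropes toward this structure, but the inequality you actually propose to integrate is the wrong one; replacing the decay step by the integrated-dissipation cascade (and taking the integrated $L^2$-in-time, $L^\infty$-in-space bound from Sobolev embedding of the higher-order integrated $L^2$ bounds, as you suggest) would repair the proof.
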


\begin{proof}Thanks to Morrey's inequality and  Sobolev embeddings, it is sufficient to prove such bounds in the Sobolev space $H^m$ for all $m$, in other words it is sufficient to prove similar bounds for  $\|\partial_{\alpha_1,...,\alpha_n}\bar{\rho}_s\|^2_{L^2}$ for all multi-indexes $\alpha$. The proof is by induction on the order of the derivatives, we only detail the first iterations. 
We write $f=\nabla\cdot\left(\left(K\ast \bar{\rho}_t\right)\bar{\rho}_t\right)=(K\ast\bar\rho_t)\cdot\nabla \bar\rho_t$.
\begin{description}

\item[Integrated bound for $\|\nabla \bar{\rho}_t\|_{L^2}^2$.] We have
\begin{align*}
\frac{1}{2}\frac{d}{dt}\int_{\mathds{T}^d}|\bar{\rho}_t|^2=\int_{\mathds{T}^d}\bar{\rho}_t\partial_t\bar{\rho}_t=\int_{\mathds{T}^d}\bar{\rho}_t\Delta\bar{\rho}_t-\int_{\mathds{T}^d}\bar{\rho}_t f.
\end{align*}
On the one hand,
\begin{align*}
\int_{\mathds{T}^d}\bar{\rho}_t\Delta\bar{\rho}_t=-\int_{\mathds{T}^d}|\nabla \bar{\rho}_t|^2.
\end{align*}
On the other hand,
\begin{align*}
\int_{\mathds{T}^d}\bar{\rho}_t f=\int_{\mathds{T}^d}\bar{\rho}_t \nabla\cdot\left(\left(K\ast \bar{\rho}_t\right)\bar{\rho}_t\right)=-\int_{\mathds{T}^d}\nabla\bar{\rho}_t\cdot\left(K\ast \bar{\rho}_t\right)\bar{\rho}_t=-\int_{\mathds{T}^d}\bar{\rho}_t f=0.
\end{align*}
Hence,
\begin{align*}
\frac{1}{2}\frac{d}{dt}\|\bar{\rho}_t\|^2_{L^2}+\|\nabla \bar{\rho}_t\|^2_{L^2}=0.
\end{align*}
By integrating the equality above, we get $\int_0^t\|\nabla \bar{\rho}_t\|^2_{L^2}=\frac{\|\bar{\rho}_0\|^2_{L^2}-\|\bar{\rho}_t\|^2_{L^2}}{2}\leq \frac{\lambda^2}{2}=C^{\infty}_1$.

\item[Integrated bound for $\|\partial_{\alpha_1,\alpha_2} \bar{\rho}_t\|_{L^2}^2$ and uniform  bound for $\|\nabla \bar{\rho}_t\|_{L^2}^2$.] Similarly, we calculate
\begin{align*}
\frac{1}{2}\frac{d}{dt}\int_{\mathds{T}^d}|\partial_{\alpha_1} \bar{\rho}_t|^2=\int_{\mathds{T}^d}\partial_{\alpha_1} \bar{\rho}_t\partial_{\alpha_1} (\partial_t\bar{\rho}_t)&=\int_{\mathds{T}^d}\partial_{\alpha_1} \bar{\rho}_t\partial_{\alpha_1} \left(\Delta\bar{\rho}_t-f\right)\\
&=-\sum_{\alpha_2}\int_{\mathds{T}^d}|\partial_{\alpha_1,\alpha_2} \bar{\rho}_t|^2+\int_{\mathds{T}^d}\partial_{\alpha_1,\alpha_1}\bar{\rho}_t f .
\end{align*}
Bounding
\begin{align*}
\int_{\mathds{T}^d}\partial_{\alpha_1,\alpha_1}\bar{\rho}_t f\leq \|\partial_{\alpha_1,\alpha_1}\bar{\rho}_t \|_{L^2} \|f \|_{L^2}\leq \frac{1}{2}\sum_{\alpha_2}\|\partial_{\alpha_1,\alpha_2}\bar{\rho}_t \|^2_{L^2}+\frac{1}{2}\|f \|^2_{L^2},
\end{align*}
and 
\begin{align*}
\|f \|^2_{L^2}=\int_{\mathds{T}^d}\left|\sum_{\gamma=1}^d\left(K_\gamma\ast \bar{\rho}_t\right)\partial_{\gamma}\bar{\rho}_t\right|^2\leq \|K\ast \bar{\rho}_t\|_{L^\infty}^2\|\nabla\bar{\rho}_t\|^2_{L^2}\leq \|K\|_{L^1}^2\|\bar{\rho}_t\|_{L^\infty}^2\|\nabla\bar{\rho}_t\|^2_{L^2},
\end{align*}
where we used Young's convolution inequality, we get
\begin{align*}
\frac{1}{2}\frac{d}{dt}\|\partial_{\alpha_1} \bar{\rho}_t\|^2_{L^2}+\frac{1}{2}\sum_{\alpha_2}\|\partial_{\alpha_1,\alpha_2}\bar{\rho}_t\|^2_{L^2}\leq\frac{1}{2} \|K\|_{L^1}^2\|\bar{\rho}_t\|_{L^\infty}^2\|\nabla\bar{\rho}_t\|^2_{L^2}.
\end{align*}
By integrating the equality above and using Theorem~\ref{exist}, we get
\begin{align*}
\frac{\|\partial_{\alpha_1} \bar{\rho}_t\|^2_{L^2}-\|\partial_{\alpha_1} \bar{\rho}_0\|^2_{L^2}}{2}+\frac{1}{2}\int_{0}^t\sum_{\alpha_2}\|\partial_{\alpha_1,\alpha_2}\bar{\rho}_s\|^2_{L^2}ds\leq&\frac{1}{2} \|K\|_{L^1}^2\lambda^2\int_0^t\|\nabla\bar{\rho}_s\|^2_{L^2}ds\\
\leq&\frac{1}{2} \|K\|_{L^1}^2\lambda^2 C^{\infty}_1.
\end{align*}
This provides  both the existence of $C^\infty_{2}$ such that for all $t\geq0$, $\int_{0}^t\|\partial_{\alpha_1,\alpha_2}\bar{\rho}_s\|^2_{L^2}ds\leq C^\infty_{2}$, and the existence of $C^u_{1}$ such that for all $t\geq0$, $\|\partial_{\alpha_1} \bar{\rho}_t\|^2_{L^2}\leq C^u_{1}$.

\item[Integrated bound bound for $\|\partial_{\alpha_1,\alpha_2,\alpha_3} \bar{\rho}_t\|_{L^2}^2$ and uniform  bound for $\|\partial_{\alpha_1,\alpha_2} \bar{\rho}_t\|_{L^2}^2$.] We have
\begin{align*}
\partial_{\alpha} f=\sum_{\gamma}(\partial_\alpha K_\gamma\ast \bar{\rho}_t)\partial_\gamma\bar{\rho}_t+\sum_{\gamma}( K_\gamma\ast \bar{\rho}_t)\partial_{\alpha,\gamma}\bar{\rho}_t,
\end{align*}
and
\begin{multline*}
\partial_\alpha K_\gamma\ast \bar{\rho}_t=\int_{\mathds{T}^d}\partial_\alpha K_\gamma(x-y)\bar{\rho}_t(y)dy=-\int_{\mathds{T}^d}\partial_\alpha K_\gamma(y)\bar{\rho}_t(x-y)dy=-\int_{\mathds{T}^d}K_\gamma(y)\partial_\alpha\bar{\rho}_t(x-y)dy\\
=-\sum_{\beta}\int_{\mathds{T}^d}V_{\gamma,\beta}(y)\partial_{\alpha,\beta}\bar{\rho}_t(x-y)dy=\sum_{\beta}V_{\gamma,\beta}\ast \partial_{\alpha,\beta}\bar{\rho}_t.
\end{multline*}
Hence
\begin{align*}
\sum_\gamma\left(\partial_\alpha K_\gamma\ast \bar{\rho}_t\right)\partial_\gamma\bar{\rho}_t=&\sum_\gamma\left(\sum_{\beta}V_{\gamma,\beta}\ast \partial_{\alpha,\beta}\bar{\rho}_t\right)\partial_\gamma\bar{\rho}_t\\
=&\left(V\ast\partial_\alpha\nabla\bar{\rho}_t\right)\nabla\bar{\rho}_t,
\end{align*}
and thus
\begin{align*}
\|\sum_\gamma\left(\partial_\alpha K_\gamma\ast \bar{\rho}_t\right)\partial_\gamma\bar{\rho}_t\|_{L^2}\leq\|V\ast\partial_\alpha\nabla\bar{\rho}_t\|_{L^\infty}\|\nabla\bar{\rho}_t\|_{L^2}\leq \|V\|_{L^\infty}\|\partial_\alpha\nabla\bar{\rho}_t\|_{L^1}\|\nabla\bar{\rho}_t\|_{L^2}.
\end{align*}
Therefore
\begin{align*}
\|\partial_{\alpha} f\|_{L^2}^2\leq 2\|V\|_{L^\infty}^2\|\partial_\alpha\nabla\bar{\rho}_t\|_{L^1}^2\|\nabla\bar{\rho}_t\|^2_{L^2}+2\|K\|_{L^1}^2\|\bar{\rho}_t\|_{L^\infty}^2\|\partial_{\alpha}\nabla \bar{\rho}_t\|^2_{L^2}.
\end{align*}
Similarly to previous computations,
\begin{align*}
\frac{1}{2}\frac{d}{dt}\int_{\mathds{T}^d}|\partial_{\alpha_1,\alpha_2}\bar{\rho}_t|^2=&\int_{\mathds{T}^d}\partial_{\alpha_1,\alpha_2}\bar{\rho}_t\partial_{\alpha_1,\alpha_2}\left(\Delta\bar{\rho}_t-f\right)\\
=&-\sum_{\alpha_3}\int_{\mathds{T}^d}|\partial_{\alpha_1,\alpha_2,\alpha_3}\bar{\rho}_t|^2+\int_{\mathds{T}^d}\partial_{\alpha_1,\alpha_2,\alpha_2}\bar{\rho}_t\partial_{\alpha_1}f\\
\leq&-\sum_{\alpha_3}\|\partial_{\alpha_1,\alpha_2,\alpha_3}\bar{\rho}_t\|^2_{L^2}+\|\partial_{\alpha_1,\alpha_2,\alpha_2}\bar{\rho}_t\|_{L^2}\|\partial_{\alpha_1}f\|_{L^2}\\
\leq&-\sum_{\alpha_3}\|\partial_{\alpha_1,\alpha_2,\alpha_3}\bar{\rho}_t\|^2_{L^2}+\frac{1}{2}\sum_{\alpha_3}\|\partial_{\alpha_1,\alpha_2,\alpha_3}\bar{\rho}_t\|_{L^2}^2\\
&+\|V\|_{L^\infty}^2\|\partial_{\alpha_1}\nabla\bar{\rho}_t\|_{L^1}^2\|\nabla\bar{\rho}_t\|^2_{L^2}+\|K\|_{L^1}^2\|\bar{\rho}_t\|_{L^\infty}^2\|\partial_{\alpha_1}\nabla \bar{\rho}_t\|^2_{L^2}\\
\leq&-\frac{1}{2}\sum_{\alpha_3}\|\partial_{\alpha_1,\alpha_2,\alpha_3}\bar{\rho}_t\|_{L^2}^2+\|V\|_{L^\infty}^2\|\nabla\bar{\rho}_t\|^2_{L^2}\|\partial_{\alpha_1}\nabla\bar{\rho}_t\|_{L^2}^2\\
&\qquad+\|K\|_{L^1}^2\|\bar{\rho}_t\|_{L^\infty}^2\|\partial_{\alpha_1}\nabla \bar{\rho}_t\|^2_{L^2},
\end{align*}
and thus
\begin{align*}
\frac{1}{2}\frac{d}{dt}\|\partial_{\alpha_1,\alpha_2}\bar{\rho}_t\|^2_{L^2}+\frac{1}{2}\sum_{\alpha_3}\|\partial_{\alpha_1,\alpha_2,\alpha_3}\bar{\rho}_t\|_{L^2}^2\leq&\|V\|_{L^\infty}^2\|\partial_{\alpha_1}\nabla\bar{\rho}_t\|_{L^2}^2\|\nabla\bar{\rho}_t\|^2_{L^2}\\
&\qquad+\|K\|_{L^1}^2\|\bar{\rho}_t\|_{L^\infty}^2\|\partial_{\alpha_1}\nabla \bar{\rho}_t\|^2_{L^2}.
\end{align*}
Integrating over time, and using Theorem~\ref{exist}
\begin{align*}
\frac{\|\partial_{\alpha_1,\alpha_2}\bar{\rho}_t\|^2_{L^2}-\|\partial_{\alpha_1,\alpha_2}\bar{\rho}_0\|^2_{L^2}}{2}+&\frac{1}{2}\sum_{\alpha_3}\int_{0}^t\|\partial_{\alpha_1,\alpha_2,\alpha_3}\bar{\rho}_s\|_{L^2}^2ds\\
\leq&\|V\|_{L^\infty}^2dC^u_1\int_{0}^t\|\partial_{\alpha_1}\nabla\bar{\rho}_s\|_{L^2}^2ds+\|K\|_{L^1}^2\lambda^2\int_{0}^t\|\partial_{\alpha_1}\nabla \bar{\rho}_s\|^2_{L^2}ds\\
\leq&d\left(d\|V\|_{L^\infty}^2C^u_1+\|K\|_{L^1}^2\lambda^2\right)C^\infty_2
\end{align*}
This provides  both the existence of $C^\infty_{3}$ such that for all $t\geq0$, $\int_{0}^t\|\partial_{\alpha_1,\alpha_2,\alpha_3}\bar{\rho}_s\|^2_{L^2}ds\leq C^\infty_{3}$, and the existence of $C^u_{2}$ such that for all $t\geq0$, $\|\partial_{\alpha_1,\alpha_2} \bar{\rho}_t\|^2_{L^2}\leq C^u_{2}$.
\end{description}
The proof is then by induction on the order of derivative, iterating the same method.
\end{proof}

%
%

\subsection{Logarithmic Sobolev inequality}

We now establish a logarithmic Sobolev inequality (LSI) for $\bar{\rho}_t$ solution of \eqref{o_Liou}. To this end, we use the fact that the uniform distribution $u$ on $\mathds{T}^d$ satisfies a LSI and that $\bar{\rho}_t$ is bounded (below and above) uniformly in time. Recall the following  Holley-Stroock perturbation lemma, from \cite[Prop. 5.1.6]{BGL14}.


\begin{lemma}\label{BGL_6}
Assume that $\nu$ is a probability measure on $\mathds{T}^{d}$ satisfying a logarithmic Sobolev inequality with constant $C^{LS}_\nu$, i.e for all  $f\in\mathcal{C}^\infty_{>0}(\mathds{T}^{d})$,
\begin{equation*}
\mathrm{Ent}_\nu(f):=\int_{\mathds{T}^{d}}f\log f d\nu-\int_{\mathds{T}^{d}}fd\nu\log \left(\int_{\mathds{T}^{d}}f d\nu\right)\leq C^{LS}_\nu\int_{\mathds{T}^{d}}\frac{|\nabla f|^2}{f}d\nu.
\end{equation*}
Let $\mu$ be a probability measure with density $h$ with respect to $\nu$ such that, for some constant $\lambda>0$, $\frac{1}{\lambda}\leq h\leq \lambda$. Then $\mu$ satisfies a logarithmic Sobolev inequality with constant $C^{LS}_\mu=\lambda^2C^{LS}_\nu$, i.e for all    $f\in\mathcal{C}^\infty_{>0}(\mathds{T}^{d})$
\begin{equation*}
\mathrm{Ent}_\mu(f)\leq \lambda^2C^{LS}_\nu\int_{\mathds{T}^{d}}\frac{|\nabla f|^2}{f}d\mu.
\end{equation*}
\end{lemma}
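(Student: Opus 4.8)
The plan is to run the classical Holley--Stroock argument: one exploits the variational representation of entropy so that the two-sided bound on $h$ can be inserted against a nonnegative integrand.

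First I would record that for every probability measure $\mathbf m$ on $\mathds{T}^d$ and every $f\in\mathcal C^\infty_{>0}(\mathds T^d)$,
\begin{equation*}
\mathrm{Ent}_{\mathbf m}(f)=\inf_{c>0}\int_{\mathds T^d}\psi_c(f)\,d\mathbf m,\qquad \psi_c(s):=s\log s-s\log c-s+c,
\end{equation*}
the infimum being attained at $c=\int_{\mathds T^d}f\,d\mathbf m$; this is the elementary computation of minimizing $c\mapsto -m\log c+c$ with $m=\int f\,d\mathbf m$. The crucial point is that $\psi_c(s)=c\,g(s/c)$ with $g(u)=u\log u-u+1\geq 0$ for all $u\geq 0$, so $\psi_c(f)\geq 0$ pointwise, for any $c>0$.

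Then, writing $d\mu=h\,d\nu$ and using $h\leq\lambda$ together with $\psi_c(f)\geq 0$, for each fixed $c>0$ one has $\int_{\mathds T^d}\psi_c(f)\,d\mu=\int_{\mathds T^d}\psi_c(f)\,h\,d\nu\leq\lambda\int_{\mathds T^d}\psi_c(f)\,d\nu$; taking the infimum over $c>0$ on both sides (the right-hand side then being $\lambda\,\mathrm{Ent}_\nu(f)$) gives $\mathrm{Ent}_\mu(f)\leq\lambda\,\mathrm{Ent}_\nu(f)$. Next I would apply the LSI for $\nu$, namely $\mathrm{Ent}_\nu(f)\leq C^{LS}_\nu\int_{\mathds T^d}|\nabla f|^2/f\,d\nu$, and then convert the Dirichlet term back to $\mu$ using the lower bound $h\geq 1/\lambda$ and the nonnegativity of $|\nabla f|^2/f$:
\begin{equation*}
\int_{\mathds T^d}\frac{|\nabla f|^2}{f}\,d\nu=\int_{\mathds T^d}\frac{|\nabla f|^2}{f}\,\frac1h\,d\mu\leq\lambda\int_{\mathds T^d}\frac{|\nabla f|^2}{f}\,d\mu .
\end{equation*}
Chaining the three estimates yields $\mathrm{Ent}_\mu(f)\leq\lambda^2 C^{LS}_\nu\int_{\mathds T^d}|\nabla f|^2/f\,d\mu$, which is the claim; smoothness and strict positivity of $f$ on the compact torus make all the integrals above finite and legitimize the manipulations.

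There is no genuine obstacle here --- this is a textbook perturbation lemma --- so the only thing to be careful about is the \emph{direction} in which each bound on $h$ is applied: the upper bound $h\leq\lambda$ must be paired with the nonnegative integrand $\psi_c(f)$ appearing in the entropy, while the lower bound $h\geq1/\lambda$ is paired with the nonnegative integrand $|\nabla f|^2/f$ appearing in the Dirichlet form. This is exactly why the perturbation costs the factor $\lambda^2=(\sup h)/(\inf h)$ and not merely $\lambda$. I would also note explicitly that the exchange of ``$\inf_c$ before integrating'' is valid because the inequality $\int\psi_c(f)h\,d\nu\leq\lambda\int\psi_c(f)\,d\nu$ holds for each fixed $c$, so it survives passing to the infimum.
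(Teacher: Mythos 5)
Your proof is correct: the variational representation $\mathrm{Ent}_{\mathbf m}(f)=\inf_{c>0}\int\psi_c(f)\,d\mathbf m$ with the pointwise nonnegative integrand $\psi_c(f)$, combined with $h\leq\lambda$ on the entropy side and $1/h\leq\lambda$ on the Dirichlet side, is exactly the classical Holley--Stroock perturbation argument. The paper does not write out a proof but simply cites \cite[Prop.~5.1.6]{Bakry_Gentil_Ledoux}, and your argument is precisely the one given there, so there is nothing to add.
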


We also know that the uniform distribution $u$ \modif{(i.e the Lebesgue measure)} on $\mathds{T}^d$ satisfies a LSI. See for instance Proposition 5.7.5 of \cite{BGL14}, or \cite{IG_notes} for a proof in dimension 1, the results in higher dimension being a consequence of tensorization properties.


\begin{lemma}\label{log_sob_unif}
Let $u$ be the uniform distribution on $\mathds{T}^{d}$ . Then $u$ satisfies a logarithmic Sobolev inequality : for all  $f\in\mathcal{C}^\infty_{>0}(\mathds{T}^{d})$
\begin{equation} 
\mathrm{Ent}_u(f) \leq\frac{1}{8\pi^2}\int_{\mathds{T}^d}\frac{|\nabla f|^2}{f}du
\end{equation} 
\end{lemma}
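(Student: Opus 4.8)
The plan is to prove the logarithmic Sobolev inequality for the uniform measure $u$ on $\mathds{T}^d$ by reducing to the one-dimensional case and then tensorizing. First I would recall that the LSI in dimension one for the uniform measure on the circle $\mathds{T}^1$ is classical: if $g\in\mathcal{C}^\infty_{>0}(\mathds{T}^1)$, then
\begin{equation*}
\mathrm{Ent}_{u_1}(g)\leq \frac{1}{8\pi^2}\int_{\mathds{T}^1}\frac{|g'|^2}{g}\,du_1,
\end{equation*}
where the constant $1/(8\pi^2)$ comes from the spectral gap of the Laplacian on the $1$-periodic torus (first nonzero eigenvalue $4\pi^2$, realized by $\cos(2\pi x)$) together with the fact that on the torus the LSI constant equals $1/(2\lambda_1)$ — this is the Rothaus--Simon argument, or can be taken directly from Proposition~5.7.5 of \cite{Bakry_Gentil_Ledoux}, or from \cite{IG_notes}. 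It is worth emphasizing why the optimal constant is $1/(2\lambda_1)$ rather than the weaker $1/\lambda_1$ one gets for general reversible diffusions: on the torus the associated diffusion (Brownian motion) has a Bakry--\'Emery curvature bound $\mathrm{CD}(0,\infty)$, so no curvature contribution, but the improved constant follows because the ground state is constant and one can use the sharp comparison; in any case I would simply cite it.

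Next I would pass to arbitrary dimension $d$ by the tensorization property of the logarithmic Sobolev inequality. Since $u = u_1^{\otimes d}$ is the $d$-fold product of the one-dimensional uniform measure, and each factor satisfies an LSI with constant $1/(8\pi^2)$, the product measure satisfies an LSI with the \emph{same} constant $1/(8\pi^2)$ — this is the standard fact that the LSI constant of a product is the maximum of the constants of the factors (see \cite[Prop.~5.2.7]{Bakry_Gentil_Ledoux} or the discussion around tensorization therein). Concretely, one writes $\mathrm{Ent}_u(f)$ for $f\in\mathcal{C}^\infty_{>0}(\mathds{T}^d)$ as a telescoping sum over coordinates of conditional entropies, applies the one-dimensional inequality in each variable with the other variables frozen, and sums; the partial derivatives $\partial_i f$ reassemble into $|\nabla f|^2$ and one obtains
\begin{equation*}
\mathrm{Ent}_u(f)\leq \frac{1}{8\pi^2}\int_{\mathds{T}^d}\frac{|\nabla f|^2}{f}\,du.
\end{equation*}

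The only mild technical point — not really an obstacle — is justifying the one-dimensional inequality for all $f\in\mathcal{C}^\infty_{>0}(\mathds{T}^1)$ and checking that the telescoping/conditioning step is legitimate (measurability of conditional expectations, the chain rule for entropy, and that $\int |\nabla f|^2/f\,du$ is finite because $f$ is smooth and bounded below on a compact set). Since the whole statement is standard, I expect the ``proof'' to consist essentially of citing Lemma in \cite[Prop.~5.7.5]{Bakry_Gentil_Ledoux} for the one-dimensional case together with the tensorization lemma, rather than reproving anything from scratch. If a self-contained argument were desired, the hardest genuine ingredient would be establishing the sharp constant $1/(8\pi^2)$ in dimension one (the factor-of-two improvement over the Bakry--\'Emery bound $1/\lambda_1 = 1/(4\pi^2)$), which relies on a Rothaus-type argument exploiting that the optimal function in the spectral gap is trigonometric; but for the purposes of this paper the cited references suffice.
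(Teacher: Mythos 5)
Your proposal is correct and follows essentially the same route as the paper, which likewise does not reprove anything but simply invokes the one-dimensional result (Proposition~5.7.5 of \cite{Bakry_Gentil_Ledoux}, or \cite{IG_notes}) together with tensorization of the logarithmic Sobolev inequality to pass to $\mathds{T}^d$. Your constant bookkeeping is consistent ($1/(2\lambda_1)=1/(8\pi^2)$ in the entropy--Fisher-information form with $\lambda_1=4\pi^2$), and the side remarks on sharpness, while a bit loose, are immaterial since you cite the result rather than rely on them.
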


A direct consequence of Lemma~\ref{BGL_6}, Lemma~\ref{log_sob_unif} and the bounds on $\bar{\rho}_t$ given in Theorem~\ref{exist} is the following theorem, as well as its corollary. It establishes a uniform in time logarithmic Sobolev inequality for $\bar{\rho_t}$, crucial for the uniform control of the Fisher information appearing in the study of the dissipation of the entropy between the law of the particles system and the nonlinear ones.


\begin{theorem}\label{ISL_pour_rho}
Under Assumptions~\ref{assumption_rho_0} and \ref{assumption_K}, for all $t\geq0$ and all function $f\in\mathcal{C}^\infty_{>0}(\mathds{T}^{d})$,
\begin{equation*}
\mathrm{Ent}_{\bar{\rho}_t}(f)\leq \frac{\lambda^2}{8\pi^2}\int_{\mathds{T}^d}\frac{|\nabla f|^2}{f}d\bar{\rho}_t
\end{equation*}
\end{theorem}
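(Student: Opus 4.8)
The plan is to combine the Holley--Stroock perturbation principle (Lemma~\ref{BGL_6}) with the log-Sobolev inequality for the uniform measure (Lemma~\ref{log_sob_unif}), using the time-uniform two-sided bounds on $\bar\rho_t$ furnished by Theorem~\ref{exist}. The point is that, thanks to Theorem~\ref{exist}, the solution $\bar\rho_t$ lies in $\mathcal C_\lambda^\infty(\mathbb R^+\times\mathds T^d)$, so for every $t\geqslant 0$ one has $\frac1\lambda\leqslant \bar\rho_t(x)\leqslant\lambda$ for all $x\in\mathds T^d$, with the \emph{same} $\lambda$ as in Assumption~\ref{assumption_rho_0} and, crucially, independent of $t$.

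First I would fix $t\geqslant 0$ and view $\bar\rho_t$ as a probability measure on $\mathds T^d$ which is absolutely continuous with respect to the uniform measure $u$. Since $u$ has density $1$ on $\mathds T^d$ (as $|\mathds T^d|=1$), the density $h_t:=\frac{d\bar\rho_t}{du}$ is simply $h_t(x)=\bar\rho_t(x)$, so the bounds from Theorem~\ref{exist} read exactly $\frac1\lambda\leqslant h_t\leqslant\lambda$. Next I would invoke Lemma~\ref{log_sob_unif}, which says $u$ satisfies a log-Sobolev inequality with constant $C^{LS}_u=\frac1{8\pi^2}$. Then Lemma~\ref{BGL_6}, applied with $\nu=u$, $\mu=\bar\rho_t$ and this value of $\lambda$, yields immediately that for all $f\in\mathcal C^\infty_{>0}(\mathds T^d)$,
\begin{equation*}
\mathrm{Ent}_{\bar\rho_t}(f)\leqslant \lambda^2 C^{LS}_u\int_{\mathds T^d}\frac{|\nabla f|^2}{f}\,d\bar\rho_t=\frac{\lambda^2}{8\pi^2}\int_{\mathds T^d}\frac{|\nabla f|^2}{f}\,d\bar\rho_t.
\end{equation*}
Since the constant $\frac{\lambda^2}{8\pi^2}$ does not depend on $t$, the inequality holds uniformly in $t\geqslant 0$, which is the claim.

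There is essentially no obstacle here: the result is a one-line consequence of the two lemmas once the time-uniform bounds of Theorem~\ref{exist} are in hand. The only minor point worth stating carefully is that the $\lambda$ appearing in Theorem~\ref{exist} is the one from the initial condition (Assumption~\ref{assumption_rho_0}) and is propagated in time by the maximum-principle argument in the proof of Theorem~\ref{exist} (the representation $\bar\rho(t,x)=\mathbb E_x(\bar\rho_0(Z_t))$ shows $\bar\rho_t$ inherits the bounds of $\bar\rho_0$), so no enlargement of $\lambda$ occurs and the LSI constant is genuinely uniform. One should also note that it suffices to prove the inequality for $f\in\mathcal C^\infty_{>0}$ as stated, the general case following by the usual density/truncation arguments, so nothing further is needed.
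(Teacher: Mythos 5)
Your proof is correct and follows exactly the paper's route: the theorem is stated there as a direct consequence of the Holley--Stroock lemma (Lemma~\ref{BGL_6}), the LSI for the uniform measure (Lemma~\ref{log_sob_unif}), and the time-uniform bounds $\frac1\lambda\leq\bar\rho_t\leq\lambda$ from Theorem~\ref{exist}. Nothing is missing.
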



\begin{corollary}\label{coro_logsob}
Under Assumptions~\ref{assumption_rho_0} and \ref{assumption_K}, for all $N\in\mathbb N$, $t\geq0$ and all probability density $\mu_N\in\mathcal C^{\infty}_{>0}(\mathds{T}^{dN})$,
\begin{equation*}
\mathcal{H}_N\left(\mu_N,\bar{\rho}_N(t)\right)\leq\frac{\lambda^2}{8\pi^2}\frac{1}{N}\sum_{i=1}^N\int_{\mathds{T}^{d}}\mu_N\left|\nabla_{x_i}\log\frac{\mu_N}{\bar{\rho}_N(t)}\right|^2
\end{equation*}
\end{corollary}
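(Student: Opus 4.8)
The goal is to lift the single-particle logarithmic Sobolev inequality of Theorem~\ref{ISL_pour_rho} to the $N$-particle product measure $\bar\rho_N(t) = \bar\rho(t)^{\otimes N}$, and then reinterpret the resulting inequality in terms of the rescaled relative entropy $\mathcal H_N$. The plan is to proceed in two steps: first invoke the tensorization property of the logarithmic Sobolev inequality, and then perform a change of variables in the functional inequality, writing it for the density $\mu_N / \bar\rho_N(t)$.

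First I would recall the tensorization principle for logarithmic Sobolev inequalities (see, e.g., Proposition~5.2.7 of \cite{Bakry_Gentil_Ledoux}): if a probability measure $\nu$ on $\mathds{T}^d$ satisfies a logarithmic Sobolev inequality with constant $C$, then the product measure $\nu^{\otimes N}$ on $\mathds{T}^{dN}$ satisfies a logarithmic Sobolev inequality with the \emph{same} constant $C$ (the constant does not deteriorate with dimension). Applying this to $\nu = \bar\rho_t$, for which Theorem~\ref{ISL_pour_rho} gives the constant $\lambda^2/(8\pi^2)$ uniformly in $t$, we obtain that for all $t\geq 0$ and all $g \in \mathcal C^\infty_{>0}(\mathds{T}^{dN})$,
\[
\mathrm{Ent}_{\bar\rho_N(t)}(g) \leq \frac{\lambda^2}{8\pi^2} \sum_{i=1}^N \int_{\mathds{T}^{dN}} \frac{|\nabla_{x_i} g|^2}{g}\, d\bar\rho_N(t).
\]

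Next I would apply this with the choice $g = \mu_N / \bar\rho_N(t)$, which is an element of $\mathcal C^\infty_{>0}(\mathds{T}^{dN})$ by the hypotheses on $\mu_N$ and the fact that $\bar\rho_t \in \mathcal C^\infty_\lambda$. The left-hand side becomes
\[
\mathrm{Ent}_{\bar\rho_N(t)}\!\left(\tfrac{\mu_N}{\bar\rho_N(t)}\right) = \int_{\mathds{T}^{dN}} \mu_N \log\frac{\mu_N}{\bar\rho_N(t)}\, d\mathbf X^N = N\,\mathcal H_N(\mu_N, \bar\rho_N(t)),
\]
since $\int \mu_N\, d\mathbf X^N = 1$ so the subtracted term vanishes, and the normalization of $\mathcal H_N$ carries the factor $1/N$. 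For the right-hand side, I would use $\nabla_{x_i} g = g\, \nabla_{x_i} \log g$ so that $|\nabla_{x_i} g|^2 / g = g\,|\nabla_{x_i}\log g|^2$, and then $g\, d\bar\rho_N(t) = \mu_N\, d\mathbf X^N$, giving
\[
\frac{\lambda^2}{8\pi^2}\sum_{i=1}^N \int_{\mathds{T}^{dN}} \mu_N\left|\nabla_{x_i}\log\frac{\mu_N}{\bar\rho_N(t)}\right|^2 d\mathbf X^N.
\]
Dividing through by $N$ yields exactly the claimed inequality.

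The only genuine subtlety — and the place I would be most careful — is the justification that the tensorized logarithmic Sobolev inequality is applicable with test function $g = \mu_N/\bar\rho_N(t)$ and that all the integrals are finite; this is where the regularity assumptions $\mu_N \in \mathcal C^\infty_{>0}(\mathds{T}^{dN})$ and $\bar\rho_t \in \mathcal C^\infty_\lambda(\mathbb R^+\times\mathds{T}^d)$ (from Theorem~\ref{exist}), together with compactness of the torus, are used: they guarantee $g$ is smooth, bounded above and below away from zero, so both $\mathrm{Ent}_{\bar\rho_N(t)}(g)$ and the Fisher-type integral on the right are finite and the inequality is not vacuous. Beyond that, the argument is purely the standard "change of measure in the LSI" computation, so there is no real obstacle; the content of the corollary lies entirely in Theorem~\ref{ISL_pour_rho} and in the dimension-free tensorization.
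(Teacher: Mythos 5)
Your proposal is correct and follows exactly the paper's argument: tensorize the logarithmic Sobolev inequality of Theorem~\ref{ISL_pour_rho} (via Proposition~5.2.7 of \cite{Bakry_Gentil_Ledoux}) to $\bar\rho_N(t)$ and apply it to the test function $\mu_N/\bar\rho_N(t)$, rewriting entropy and Fisher information with respect to $\mu_N$. The only difference is that you spell out the change-of-measure computation and the finiteness of the integrals, which the paper leaves implicit.
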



\begin{proof}
By tensorization of the logarithmic Sobolev inequality (see for instance Proposition 5.2.7 of \cite{BGL14}), since $\bar{\rho}$ satisfies a LSI with constant $\frac{\lambda^2}{8\pi^2}$, so does $\bar{\rho}_N$. Using Theorem~\ref{ISL_pour_rho} for $f=\frac{\mu_N}{\bar{\rho}_N}$ we thus get
\begin{align*}
\mathcal{H}_N(\mu_N,\bar{\rho}_N\modif{(t)})=\frac{1}{N}\text{Ent}_{\bar{\rho}_N\modif{(t)}}\left(\frac{\mu_N}{\bar{\rho}_N\modif{(t)}}\right)\leq \frac{\lambda^2}{8\pi^2}\frac{1}{N}\mathbb{E}_{\bar{\rho}_N\modif{(t)}}\left(\left|\nabla_x\frac{\mu_N}{\bar{\rho}_N\modif{(t)}}\right|^2\frac{\bar{\rho}_N\modif{(t)}}{\mu_N}\right).
\end{align*}
Hence the result.
\end{proof}

%
%

\section{Proofs of the main results}\label{Section_Proof}

From now on and up to Section~\ref{sec:nonsmoothrhon} (excluded), in addition to Assumptions~\ref{assumption_rho_0} and \ref{assumption_K}, we suppose that there  exists a solution   ${\rho_N \in \mathcal{C}^{\infty}_{>0}(\mathbb{R}^+\times\mathds{T}^{dN})}$ of \eqref{part_liou}. This justifies the validity of the various calculations conducted in this part of the proof.
The question to lift this assumption (by taking a limit in a regularized problem) is addressed in Section~\ref{sec:nonsmoothrhon}

%
%

\subsection{Time evolution of the relative entropy}

 We write 
\[\mathcal H_N(t) = \mathcal{H}_N(\rho_N(t),\bar{\rho}_N(t))\,,\qquad \mathcal I_N(t) = \frac{1}{N}\sum_i\int_{\mathds{T}^{dN}}\rho_N(t)\left|\nabla_{x_i}\log\frac{\rho_N(t)}{\bar{\rho}_N(t)}\right|^2\modif{d\textbf{x}}.\]
as short hands for the relative entropy and relative Fisher information.
We start by calculating the time evolution of the relative entropy.


\begin{lemma}\label{lem:dissipation}
For all $t\geqslant 0$, 
\begin{equation}\label{maj_H}
\frac{d}{dt}\mathcal{H}_N(t)\leq A_N(t)+\frac{1}{2}B_N(t)-\frac{1}{2} \mathcal I_N(t),
\end{equation}
with
\begin{align*}
A_N(t):=&\frac{1}{N^2}\sum_{i,j}\int_{\mathds{T}^{dN}}\rho_N\left(V(x_i-x_j)-V\ast \modif{\bar{\rho}_t}(x_i)\right):\frac{\nabla_{x_i}^2\bar{\rho}_N}{\bar{\rho}_N}\modif{d\textbf{x}}\\
B_N(t):=&\frac{1}{N}\sum_{i}\int_{\mathds{T}^{dN}}\rho_N\frac{\left|\nabla_{x_i}\bar{\rho}_N\right|^2}{\bar{\rho}_N^2}\left|\frac{1}{N}\sum_jV(x_i-x_j)-V\ast \modif{\bar{\rho}_t}(x_i)\right|^2_\modif{f}\modif{d\textbf{x}}.
\end{align*}
\modif{Here, $|\cdot|_f^2$ denotes the sum of the square of the coefficients of the matrix.}
\end{lemma}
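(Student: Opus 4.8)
The plan is to differentiate the rescaled relative entropy $\mathcal H_N(t) = \frac1N\mathbb E_{\bar\rho_N}\!\left(\frac{\rho_N}{\bar\rho_N}\log\frac{\rho_N}{\bar\rho_N}\right)$ directly, using the PDEs \eqref{part_liou} and \eqref{o_Liou}, and then to reorganize the resulting terms into a dissipation term (the Fisher information $-\frac12\mathcal I_N$) plus error terms coming from the mismatch between the true pairwise interaction $\frac1N\sum_j K(x_i-x_j)$ and the mean-field drift $K\ast\bar\rho(x_i)$. Concretely, write $g_N = \rho_N/\bar\rho_N$. A standard computation (as in Jabin--Wang) gives
\[
\frac{d}{dt}\mathcal H_N(t) = \frac1N\int_{\mathds{T}^{dN}} \left(\partial_t\rho_N\right)\log g_N\, d\mathbf X^N + \frac1N\int_{\mathds{T}^{dN}} \rho_N\, \partial_t\log\bar\rho_N^{-1}\,d\mathbf X^N\,,
\]
where the second term uses $\int \partial_t\rho_N = 0$. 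Substituting the two Liouville equations, integrating by parts in each $x_i$, and collecting the Laplacian contributions from both $\rho_N$ and $\bar\rho_N$ produces exactly $-\frac1N\sum_i\int \rho_N\left|\nabla_{x_i}\log g_N\right|^2 = -\mathcal I_N(t)$ together with a cross term of the form $\frac1N\sum_i\int \rho_N\, b_i\cdot\nabla_{x_i}\log g_N$, where $b_i := \frac1N\sum_j K(x_i-x_j) - K\ast\bar\rho(x_i)$ is the drift discrepancy.

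Next I would control that cross term. The inequality $\frac1N\sum_i\int\rho_N\, b_i\cdot\nabla_{x_i}\log g_N \le \frac12\mathcal I_N + \frac12\cdot\frac1N\sum_i\int\rho_N|b_i|^2$ via Young's inequality would absorb half of the Fisher information, leaving $-\frac12\mathcal I_N$ on the right and a remainder $\frac12\cdot\frac1N\sum_i\int\rho_N|b_i|^2$. However, this crude bound is not enough because $b_i$ involves $K$ evaluated at coincident points, which is singular. The crucial step — and the main obstacle — is to use Assumption~\ref{assumption_K}, namely $K = \nabla\cdot V$ with $V\in L^\infty$, to perform an additional integration by parts that moves the singularity onto the smooth density $\bar\rho_N$. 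Rather than bounding $|b_i|^2$ directly, one keeps the cross term in the form $\frac1N\sum_i\int\rho_N\, b_i\cdot\nabla_{x_i}\log g_N = \frac1N\sum_i\int\rho_N\, b_i\cdot\left(\nabla_{x_i}\log\rho_N - \nabla_{x_i}\log\bar\rho_N\right)$; for the $\nabla_{x_i}\log\bar\rho_N$ piece, write $b_i\cdot\nabla_{x_i}\log\bar\rho_N = b_i\cdot\frac{\nabla_{x_i}\bar\rho_N}{\bar\rho_N}$, and for the $\nabla_{x_i}\log\rho_N$ piece, use $\rho_N\nabla_{x_i}\log\rho_N = \nabla_{x_i}\rho_N$ and integrate by parts in $x_i$ again, passing the derivative onto $b_i = \frac1N\sum_j \nabla\cdot V(x_i-x_j) - (\nabla\cdot V)\ast\bar\rho(x_i)$, which by the divergence structure (and the remark that $\nabla\cdot K = 0$ kills the spurious terms) turns $\nabla_{x_i}b_i$ into $\frac1N\sum_j \nabla V(x_i-x_j):\nabla_{x_i}(\cdot) $-type objects acting on $\frac{\nabla_{x_i}^2\bar\rho_N}{\bar\rho_N}$ and $\frac{|\nabla_{x_i}\bar\rho_N|^2}{\bar\rho_N^2}$. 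Carefully tracking these manipulations yields precisely $A_N(t)$ (the $V$-weighted second-derivative term) and $\frac12 B_N(t)$ (the $V$-weighted squared-gradient term), with $V$ now appearing only through its $L^\infty$ bound and all singularities resolved.

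To assemble the proof I would therefore: (i) record the clean entropy-dissipation identity with the two PDEs plugged in; (ii) isolate $-\mathcal I_N$ and the single cross term via the elementary identity $\Delta_{x_i}\rho_N/\rho_N - \text{(drift terms)} = \dots$; (iii) split the cross term along $\nabla\log\rho_N$ and $\nabla\log\bar\rho_N$ and integrate by parts on the $\rho_N$ part using $K=\nabla\cdot V$; (iv) apply the divergence-free property $\nabla\cdot K=0$ to discard terms where the derivative hits $V(x_i-x_j)$ in the "wrong" slot, leaving second derivatives of $\bar\rho_N$; (v) use Cauchy--Schwarz/Young on the genuinely quadratic leftover to produce $\frac12 B_N(t) - \frac12\mathcal I_N(t)$, while $A_N(t)$ stays as an exact term. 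The main difficulty is bookkeeping the several integrations by parts so that no boundary terms appear (true on the torus) and so that the singular factor $K(x_i-x_j)$ never appears without being paired with a derivative of the smooth $\bar\rho_N$; the smoothness and uniform bounds on $\bar\rho_t$ from Theorem~\ref{exist} and Lemma~\ref{Bornes_derivees} guarantee every integral in $A_N$ and $B_N$ is finite, and the exchangeability of $\rho_N$ lets one symmetrize the double sums cleanly.
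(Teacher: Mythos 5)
Your high-level plan is indeed the paper's plan (it is essentially the Jabin--Wang computation): differentiate $\mathcal H_N$, obtain $-\mathcal I_N$ plus the cross term $\frac1N\sum_i\int\rho_N\, b_i\cdot\nabla_{x_i}\log\frac{\rho_N}{\bar\rho_N}$ with $b_i=\frac1N\sum_j K(x_i-x_j)-K\ast\bar\rho(x_i)$, then exploit $\nabla\cdot K=0$ and $K=\nabla\cdot V$ to shift the singularity onto the smooth $\bar\rho_N$ and absorb half the Fisher information by Young's inequality. But the central step is garbled in a way that matters: the two pieces of the cross term play the opposite roles from the ones you assign them. The $\nabla_{x_i}\log\rho_N$ piece equals $\sum_i\int \nabla_{x_i}\rho_N\cdot b_i=-\sum_i\int\rho_N\,\nabla_{x_i}\cdot b_i=0$, because $\nabla\cdot K=0$: after the integration by parts it vanishes identically and produces nothing; in particular it cannot produce objects "acting on $\nabla_{x_i}^2\bar\rho_N/\bar\rho_N$ and $|\nabla_{x_i}\bar\rho_N|^2/\bar\rho_N^2$", since $\bar\rho_N$ does not appear in it at all. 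All of $A_N$ and $B_N$ must come from the other piece, $-\frac1{N^2}\sum_{i,j}\int\rho_N\bigl(K(x_i-x_j)-K\ast\bar\rho(x_i)\bigr)\cdot\frac{\nabla_{x_i}\bar\rho_N}{\bar\rho_N}$, which you merely rewrite and never actually treat. There one substitutes $K=\nabla\cdot V$ and integrates by parts in $x_i$ so that the derivative lands on $\frac{\rho_N}{\bar\rho_N}\nabla_{x_i}\bar\rho_N$; the Leibniz rule then gives exactly $A_N$ (when the derivative hits $\nabla_{x_i}\bar\rho_N$, producing $\rho_N\bigl(V(x_i-x_j)-V\ast\bar\rho(x_i)\bigr):\nabla^2_{x_i}\bar\rho_N/\bar\rho_N$) plus the term $\bigl(V(x_i-x_j)-V\ast\bar\rho(x_i)\bigr):\nabla_{x_i}\bar\rho_N\otimes\nabla_{x_i}\frac{\rho_N}{\bar\rho_N}$, which Cauchy--Schwarz and Young, together with the identity $\frac{\bar\rho_N^2}{\rho_N}\bigl|\nabla_{x_i}\frac{\rho_N}{\bar\rho_N}\bigr|^2=\rho_N\bigl|\nabla_{x_i}\log\frac{\rho_N}{\bar\rho_N}\bigr|^2$, bound by $\frac12\mathcal I_N+\frac12 B_N$.

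A second, related problem: you invoke "$\nabla V(x_i-x_j)$-type objects". No derivative of $V$ can appear in a correct version of this argument, because $V$ is only assumed to be in $L^\infty$ and its divergence is the singular kernel $K$ itself; the entire point of writing $K=\nabla\cdot V$ is to integrate by parts so that the derivative moves \emph{off} the kernel and onto the smooth factor $\rho_N\nabla_{x_i}\bar\rho_N/\bar\rho_N$, whose derivatives are controlled by Theorem~\ref{exist} and Lemma~\ref{Bornes_derivees}. Likewise, there are no terms "where the derivative hits $V(x_i-x_j)$ in the wrong slot" to be discarded: after the correct integration by parts, $V$ is never differentiated. With the roles of the two pieces corrected as above, the rest of your outline (Young's inequality giving $-\frac12\mathcal I_N+\frac12 B_N$, $A_N$ kept as an exact term, no boundary terms on the torus) coincides with the paper's proof.
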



\begin{proof}
It has been shown, in \cite{JW18}, that
\begin{align*}
\frac{d}{dt}\mathcal{H}_N(t)\leq- \mathcal I_N(t)-\frac{1}{N^2}\sum_{i,j}\int_{\mathds{T}^{dN}}\rho_N\left(K(x_i-x_j)-K\ast \modif{\bar{\rho}_t}(x_i)\right)\cdot\nabla_{x_i}\log\bar{\rho}_N \modif{d\textbf{x}},
\end{align*}
with
\begin{align*}
-\frac{1}{N^2}\sum_{i,j}\int_{\mathds{T}^{dN}}\rho_N&\left(K(x_i-x_j)-K\ast \modif{\bar{\rho}_t}(x_i)\right)\cdot\nabla_{x_i}\log\bar{\rho}_N \modif{d\textbf{x}}\\
=&\frac{1}{N^2}\sum_{i,j}\int_{\mathds{T}^{dN}}\rho_N\left(V(x_i-x_j)-V\ast \modif{\bar{\rho}_t}(x_i)\right):\frac{\nabla_{x_i}^2\bar{\rho}_N}{\bar{\rho}_N}\modif{d\textbf{x}}\\
&+\frac{1}{N^2}\sum_{i,j}\int_{\mathds{T}^{dN}}\left(V(x_i-x_j)-V\ast \modif{\bar{\rho}_t}(x_i)\right):\nabla_{x_i}\bar{\rho}_N\otimes\nabla_{x_i}\frac{\rho_N}{\bar{\rho_N}}\modif{d\textbf{x}}.
\end{align*}
Let us consider the latter term
\begin{align*}
\frac{1}{N^2}\sum_{i,j}\int_{\mathds{T}^{dN}}&\left(V(x_i-x_j)-V\ast \modif{\bar{\rho}_t}(x_i)\right):\nabla_{x_i}\bar{\rho}_N\otimes\nabla_{x_i}\frac{\rho_N}{\bar{\rho_N}}\modif{d\textbf{x}}\\
&=\frac{1}{N}\sum_{i}\sum_{\alpha,\beta}\int_{\mathds{T}^{dN}}\left(\frac{1}{N}\sum_jV(x_i-x_j)-V\ast \modif{\bar{\rho}_t}(x_i)\right)_{\alpha,\beta}\left(\nabla_{x_i}\bar{\rho}_N\right)_\alpha\left(\nabla_{x_i}\frac{\rho_N}{\bar{\rho_N}}\right)_\beta \modif{d\textbf{x}}.
\end{align*}
Let
\begin{align*}
y^i_{\beta}:=\left(\nabla_{x_i}\frac{\rho_N}{\bar{\rho_N}}\right)_\beta\frac{\bar{\rho}_N}{\sqrt{\rho_N}},\ \ \ z^i_\alpha:=\left(\nabla_{x_i}\bar{\rho}_N\right)_\alpha\frac{\sqrt{\rho_N}}{\bar{\rho}_N},\ \text{ and }\ x^i_{\alpha,\beta}:=\left(\frac{1}{N}\sum_jV(x_i-x_j)-V\ast \bar{\rho}(x_i)\right)_{\alpha,\beta},
\end{align*}
then, using $xy\leq\frac{x^2}{2}+\frac{y^2}{2}$ for all $x,y\in\mathbb{R}$, 
\begin{align*}
\sum_{\alpha,\beta}x^i_{\alpha,\beta}z^i_\alpha y^i_{\beta}=\sum_\beta y^i_{\beta}\left(\sum_\alpha x^i_{\alpha,\beta}z^i_\alpha\right)\leq\frac{1}{2}\sum_\beta (y^i_{\beta})^2+\frac{1}{2}\sum_\beta\left(\sum_\alpha x^i_{\alpha,\beta}z^i_\alpha\right)^2,
\end{align*}
and thus, using the Cauchy-Schwarz inequality,
\begin{align*}
\sum_{\alpha,\beta}x^i_{\alpha,\beta}z^i_\alpha y^i_{\beta}\leq&\frac{1}{2}\sum_\beta (y^i_{\beta})^2+\frac{1}{2}\sum_\beta \left( \sum_\alpha (x^i_{\alpha,\beta})^2\right) \left(\sum_\alpha (z^i_\alpha)^2\right)\\
=&\frac{1}{2}\sum_\beta (y^i_{\beta})^2+\frac{1}{2}\left(\sum_\alpha (z^i_\alpha)^2\right) \left( \sum_{\alpha,\beta} (x^i_{\alpha,\beta})^2\right).
\end{align*}
Hence
\begin{align*}
\frac{1}{N^2}&\sum_{i,j}\int_{\mathds{T}^{dN}}\left(V(x_i-x_j)-V\ast \modif{\bar{\rho}_t}(x_i)\right):\nabla_{x_i}\bar{\rho}_N\otimes\nabla_{x_i}\frac{\rho_N}{\bar{\rho_N}}\modif{d\textbf{x}}\\
\leq&\frac{1}{2N}\sum_{i}\int_{\mathds{T}^{dN}}\frac{\bar{\rho}_N^2}{\rho_N}\left|\nabla_{x_i}\frac{\rho_N}{\bar{\rho_N}}\right|^2\modif{d\textbf{x}}+\frac{1}{2N}\sum_{i}\int_{\mathds{T}^{dN}}\rho_N\frac{\left|\nabla_{x_i}\bar{\rho}_N\right|^2}{\bar{\rho}_N^2}\left|\frac{1}{N}\sum_jV(x_i-x_j)-V\ast \modif{\bar{\rho}_t}(x_i)\right|^2_\modif{f}\modif{d\textbf{x}}\\
=&\frac{1}{2}\mathcal I_N(t) +\frac{1}{2N}\sum_{i}\int_{\mathds{T}^{dN}}\rho_N\frac{\left|\nabla_{x_i}\bar{\rho}_N\right|^2}{\bar{\rho}_N^2}\left|\frac{1}{N}\sum_jV(x_i-x_j)-V\ast \modif{\bar{\rho}_t}(x_i)\right|^2_\modif{f}\modif{d\textbf{x}}.
\end{align*}
This yields the desired result.
\end{proof}

%
%

\subsection{Change of reference measure and Law of Large Number}

We now state three general results which will be useful in order to control the error terms $A_N$ and $B_N$ defined in Lemma~\ref{lem:dissipation}. The first one will be used to perform a change of measure from $\rho_N$ to $\bar \rho_N$.


\begin{lemma}\label{changement_de_loi}
\modif{Let $N\in\mathbb N$}. For two probability densities $\mu$ and $\nu$ on \modif{$\mathds{T}^{dN}$,} 
and any $\Phi\in L^{\infty}(\modif{\mathds{T}^{dN}})$  and  $\eta>0$,
\begin{equation*}
\mathbb{E}^\mu\Phi\leq\eta\mathcal{H}_N(\mu,\nu)+\frac{\eta}{N}\log\mathbb{E}^\nu e^{N\Phi/\eta}.
\end{equation*}
\end{lemma}


\begin{proof}
Define
\begin{equation*}
f=\frac{1}{\theta}e^{N\Phi/\eta}\nu,\hspace{1cm}\theta=\int_{\mathds{T}^{dN}}e^{ N\Phi/\eta}\nu \modif{d\textbf{x}}.
\end{equation*}
Notice $f$ is a probability density. By convexity of the entropy
\begin{equation*}
\frac{1}{N}\int_{\mathds{T}^{dN}}\mu\log f\modif{d\textbf{x}}\leq\frac{1}{N}\int_{\mathds{T}^{dN}}\mu\log \mu \modif{d\textbf{x}}.
\end{equation*}
On the other hand
\begin{equation*}
\frac{1}{N}\int_{\mathds{T}^{dN}}\mu\log f\modif{d\textbf{x}}=\frac{1}{\eta }\int_{\mathds{T}^{dN}}\mu\Phi \modif{d\textbf{x}}+\frac{1}{N}\int_{\mathds{T}^{dN}}\mu\log \nu \modif{d\textbf{x}}-\frac{\log \theta}{N}.
\end{equation*}
\end{proof}

The next two statements are crucial theorems of \cite{JW18}.

\begin{theorem}\label{borner_A}[\modif{Theorem~3 of \cite{JW18}}]
Consider any probability measure $\mu$ on $\mathds{T}^{d}$ and a scalar function $\psi\in L^{\infty}(\mathds{T}^d\times\mathds{T}^d)$ with $\|\psi\|_{L^\infty}<\frac{1}{2\modif{e}}$ and such that for all  $z\in\mathds{T}^d$, $\int_{\mathds{T}^{d}}\psi(z,x)\mu(dx)=0$. Then
\begin{equation}\label{borne_A}
\int_{\mathds{T}^{dN}}\exp\Big(\frac{1}{N}\sum_{j_1,j_2=1}^N\psi(x_1,x_{j_1})\psi(x_1,x_{j_2})\Big)\mu^{\otimes N}\modif{d\textbf{x}}\leq C=2\left(1+\frac{10\alpha}{(1-\alpha)^3}+\frac{\beta}{1-\beta}\right),
\end{equation}
where 
\begin{align*}
\alpha=\left(\modif{e}\|\psi\|_{L^\infty}\right)^4<1\text{ , }\ \beta=\left(\sqrt{2\modif{e}}\|\psi\|_{L^\infty}\right)^4<1.
\end{align*}
\end{theorem}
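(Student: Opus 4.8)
The plan is to expand the squared sum and reduce everything to a combinatorial estimate on moments of the mean-zero kernel $\psi$, exactly as in \cite{Jabin_Wang_2018}. First I would write
\[
\frac{1}{N}\sum_{j_1,j_2=1}^N\psi(x_1,x_{j_1})\psi(x_1,x_{j_2}) = \frac{1}{N}\Big(\sum_{j=2}^N\psi(x_1,x_j)\Big)^2 + \text{(boundary terms involving $j_1=1$ or $j_2=1$)},
\]
and treat the $j_1=1$ or $j_2=1$ contributions separately — they are $O(1/N)$ times bounded quantities (using $\|\psi\|_{L^\infty}<1/(2\epsilon)$) and only affect the constant $C$. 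The core object is then $\exp\big(\frac1N S^2\big)$ with $S=\sum_{j=2}^N\psi(x_1,x_j)$ and, conditionally on $x_1=z$, the variables $\psi(z,x_j)$ for $j\geq 2$ are i.i.d., centered (this is the hypothesis $\int\psi(z,x)\mu(dx)=0$), and bounded by $\|\psi\|_{L^\infty}$.

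Next I would expand the exponential as a power series and use Fubini:
\[
\int_{\mathds{T}^{dN}}\exp\Big(\tfrac1N S^2\Big)\mu^{\otimes N}d\mathbf X^N = \sum_{k\geq 0}\frac{1}{k!\,N^k}\int_{\mathds{T}^d}\mathbb E_{\mu^{\otimes(N-1)}}\big[S^{2k}\big]\,\mu(dz).
\]
The heart of the argument is the combinatorial bound on $\mathbb E[S^{2k}]$ for a sum of $N-1$ i.i.d.\ centered bounded variables: expanding $S^{2k}=\sum \psi(z,x_{j_1})\cdots\psi(z,x_{j_{2k}})$, any term in which some index appears exactly once vanishes by centering, so only partitions of $\{1,\dots,2k\}$ into blocks of size $\geq 2$ survive. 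Counting these partitions and bounding each surviving product by $\|\psi\|_{L^\infty}^{2k}$ times the appropriate power of $N$, one gets a bound of the shape $\mathbb E[S^{2k}]\leq (2k)!\,N^k\|\psi\|_{L^\infty}^{2k}\,(\text{something like }2^{-k}$ or a geometric factor$)$ — more precisely the standard estimate separating "pair partitions" (contributing $\sim N^k$, the dominant Gaussian-type term) from partitions using at least one block of size $\geq 3$ (contributing lower powers of $N$). Feeding this back into the series and using $\alpha=(\epsilon\|\psi\|_{L^\infty})^4<1$, $\beta=(\sqrt{2\epsilon}\|\psi\|_{L^\infty})^4<1$ to ensure convergence of the two resulting geometric-type series yields the explicit constant $2\big(1+\tfrac{10\alpha}{(1-\alpha)^3}+\tfrac{\beta}{1-\beta}\big)$.

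The main obstacle is the precise bookkeeping in the moment expansion: one must carefully split the partitions of $\{1,\dots,2k\}$ according to the number of blocks of size exactly $2$ versus larger, track exactly how many free indices each configuration leaves (hence the power of $N$ it contributes), and match the combinatorial factors to the stated closed form for $C$. This is where the specific exponents $4$ in $\alpha$ and $\beta$ and the $1/(1-\alpha)^3$ come from, and it requires reproducing the counting lemma of \cite{Jabin_Wang_2018} faithfully; the probabilistic and analytic inputs (centering kills odd terms, boundedness controls products, $\epsilon\|\psi\|_{L^\infty}$ small ensures summability) are otherwise routine. Since this is verbatim Theorem~1 of \cite{Jabin_Wang_2018}, I would in fact simply invoke that reference for the detailed combinatorics rather than reprove it, noting only that the hypotheses on $\psi$ and $\epsilon$ here match theirs.
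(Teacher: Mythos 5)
Your approach coincides with the paper's: Theorem~\ref{borner_A} is stated there as a result taken verbatim from \cite{Jabin_Wang_2018} and is not reproved, which is exactly what you ultimately propose, and your outline of the series expansion, the cancellation of terms with a singleton index, and the combinatorial bookkeeping behind $\alpha$, $\beta$ and the constant $C$ is a faithful summary of the cited argument.
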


The second one is a nice improvement of the usual level two large deviations bound for i.i.d. random variables.

\begin{theorem}\label{borner_B}[\modif{Theorem~4 of \cite{JW18}}]
Consider any probability measure $\mu$ on $\mathds{T}^{d}$ and  $\phi\in L^{\infty}(\mathds{T}^d\times\mathds{T}^d)$ with
\begin{equation}\label{gamma}
\gamma:=\left(1600^2+36e^4\right)\Big(\sup_{p\geq1}\frac{\|\sup_{z}|\phi(\cdot,z)|\|_{L^p(\mu))}}{p}\Big)^2<1.
\end{equation}
Assume that $\phi$ satisfies the following cancellations
\begin{equation*}
\forall z\in\mathds{T}^d,\quad \int_{\mathds{T}^{d}}\phi(x,z)\mu(dx)=0 = \int_{\mathds{T}^{d}}\phi(z,x)\mu(dx)\,.
\end{equation*}
Then, for all $N\in\mathbb N$,
\begin{equation}\label{borne_B}
\int_{\mathds{T}^{dN}}\exp\Big(\frac{1}{N}\sum_{i,j=1}^N\phi(x_i,x_{j})\Big)\mu^{\otimes N}\modif{d\textbf{x}}\leq \frac{2}{1-\gamma}<\infty.
\end{equation}
\end{theorem}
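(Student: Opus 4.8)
The plan is to reduce Theorem~\ref{borner_B} to a combination of a ``diagonal'' estimate handled as in Theorem~\ref{borner_A} and a genuine large deviations bound for the off-diagonal double sum, following the strategy of \cite{Jabin_Wang_2018}. First I would split the double sum as
\[
\frac1N\sum_{i,j=1}^N\phi(x_i,x_j) = \frac1N\sum_{i}\phi(x_i,x_i) + \frac1N\sum_{i\neq j}\phi(x_i,x_j)\,,
\]
but since the diagonal term is $O(1)$ only after we know $\phi$ is bounded, the cleaner route (and the one I expect the paper takes) is to symmetrize: write $\phi = \phi^s + \phi^a$ with $\phi^s(x,z)=\tfrac12(\phi(x,z)+\phi(z,x))$ symmetric and $\phi^a$ antisymmetric, so that $\sum_{i,j}\phi^a(x_i,x_j)=0$ identically, and one is left to control $\exp(\tfrac1N\sum_{i,j}\phi^s(x_i,x_j))$. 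For the symmetric part, use a Taylor expansion of the exponential: expand $e^{t}=\sum_{k\geq0}t^k/k!$ with $t=\frac1N\sum_{i,j}\phi^s(x_i,x_j)$, integrate term by term against $\mu^{\otimes N}$, and estimate the resulting moments
\[
\int_{\mathds{T}^{dN}}\Bigl(\frac1N\sum_{i,j}\phi^s(x_i,x_j)\Bigr)^k \mu^{\otimes N}\,d\mathbf X^N
\]
by a combinatorial expansion over the ways the indices $(i_1,j_1),\dots,(i_k,j_k)$ can coincide, exactly as in the proof of Theorem~\ref{borner_A}. The cancellation hypothesis $\int\phi^s(x,z)\mu(dx)=0$ (which follows from the two stated cancellations) kills every term in which some variable appears exactly once, so only ``clustered'' index configurations survive, and each surviving cluster of size $\ell$ contributes a factor bounded by $\|\sup_z|\phi(\cdot,z)|\|_{L^\ell(\mu)}$, which by hypothesis \eqref{gamma} is at most (a constant times) $\ell\sqrt\gamma$.

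Concretely, I would organize the moment bound by introducing $\Lambda := \sup_{p\geq1}\frac{\|\sup_z|\phi(\cdot,z)|\|_{L^p(\mu)}}{p}$, so that for every integer $p\geq1$ one has $\|\sup_z|\phi(\cdot,z)|\|_{L^p(\mu)}\leq \Lambda p$, and likewise (using the other cancellation and Hölder/Minkowski) $\|\sup_x|\phi(x,\cdot)|\|_{L^p(\mu)}\leq \Lambda p$. Then the combinatorial count of connected index configurations — the standard ``graph of coincidences must have no leaves'' argument, so every connected component has at least as many edges as vertices minus the constraint forced by cancellation — yields a bound of the form
\[
\int_{\mathds{T}^{dN}}\Bigl(\frac1N\sum_{i,j}\phi(x_i,x_j)\Bigr)^k \mu^{\otimes N}\,d\mathbf X^N \;\leq\; k!\,\bigl(C_0\Lambda\bigr)^{2k}\cdot(\text{something like }k^{k}\text{ absorbed})\,,
\]
and after summing over $k$ with the $1/k!$ from the exponential one needs the series $\sum_k (\text{const}\cdot\Lambda)^{2k}$ to converge, i.e. exactly the condition $\gamma = (1600^2+36e^4)\Lambda^2 < 1$, giving the geometric-series value $\frac{2}{1-\gamma}$. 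The explicit constants $1600^2$ and $36e^4$ are precisely what come out of bounding the number of such index graphs of size $k$ (the $1600$ being the per-vertex branching constant) together with the Stirling-type estimate $k!\leq e\,k^{k+1/2}e^{-k}$ used to tame $k^k$; I would not reproduce this bookkeeping in full but cite the corresponding computation in \cite{Jabin_Wang_2018} (their Theorem~3 / combinatorial lemma) and indicate that the antisymmetrization reduces our $\phi$ to theirs.

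The main obstacle is the combinatorial moment estimate: one must carefully classify the index multi-sets appearing in $\bigl(\sum_{i,j}\phi(x_i,x_j)\bigr)^k$ according to the multigraph on $\{1,\dots,N\}$ whose edges are the pairs $(i_m,j_m)$, observe that the cancellation property forces each connected component to contain no vertex of degree one (a ``pendant'' vertex would integrate to zero), deduce that a component on $v$ vertices uses at most $v$ of the $N$ available labels while carrying at least $v$ edges, so the $N^{-k}$ prefactor is compensated, and then bound the integral of a product of $\phi$'s over a fixed component by Hölder in terms of $\|\sup|\phi|\|_{L^{\deg}}$. Getting the dependence on $k$ right — so that the final sum over $k$ converges under the stated smallness of $\gamma$ rather than under some worse condition — is the delicate point, and it is exactly there that the hypothesis is phrased in terms of $\sup_{p\geq1}\|\cdot\|_{L^p}/p$ (a sub-exponential / Orlicz-type control) rather than a plain $L^\infty$ bound. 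Everything else — the antisymmetric reduction, the term-by-term integration of the exponential series, the passage from the two one-sided cancellations to the cancellation needed for $\phi^s$ — is routine.
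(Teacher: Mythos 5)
A preliminary remark on the comparison you ask for: the paper does not prove this statement at all. It is imported verbatim from \cite{Jabin_Wang_2018} (it is their large deviation theorem, with exactly these constants), and the series expansion plus index-graph combinatorics that you defer to that reference is precisely what the paper also defers. So the overall strategy you describe — expand the exponential, integrate term by term against $\mu^{\otimes N}$, use the cancellations to kill every configuration containing a vertex of degree one, control the surviving clusters through $\|\sup_z|\phi(\cdot,z)|\|_{L^p(\mu)}\leq\Lambda p$, and sum the resulting series under $\gamma<1$ — is indeed the strategy of the cited proof, and simply invoking that theorem would match the paper's own treatment.

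However, your preliminary reduction contains a genuine gap. You propose to replace $\phi$ by its symmetric part $\phi^s(x,z)=\frac12(\phi(x,z)+\phi(z,x))$ (the antisymmetric part does sum to zero, that much is fine) and then assert that ``likewise, using the other cancellation and H\"older/Minkowski'', one also has $\|\sup_x|\phi(x,\cdot)|\|_{L^p(\mu)}\leq\Lambda p$. This does not follow: hypothesis \eqref{gamma} controls only the function $x\mapsto\sup_z|\phi(x,z)|$, i.e.\ the supremum in the \emph{second} variable measured in $L^p(\mu)$ in the first, and the cancellation conditions are mean-zero conditions that give no control whatsoever on $x\mapsto\sup_z|\phi(z,x)|$. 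For instance, with $\phi(x,z)=f(x)g(z)$, $f=\lambda(\mathds{1}_A-\mu(A))$ a tall spike on a set of measure $\epsilon$ and $g$ mean-zero with $|g|=1$, both cancellations hold and $\sup_{p\geq1}p^{-1}\|\sup_z|\phi(\cdot,z)|\|_{L^p(\mu)}\approx\lambda/\ln(1/\epsilon)$ can be made arbitrarily small, while the swapped quantity is of order $\lambda$; the symmetrized kernel $\phi^s$ then violates \eqref{gamma} badly, so your cluster estimates, which need $\|\sup_z|\phi^s(\cdot,z)|\|_{L^\ell(\mu)}\lesssim\Lambda\ell$, break down. The asymmetry of the hypothesis is a genuine feature of the statement (and is used in the application of the theorem in this paper, where $\phi(z,x)$ involves $\nabla^2\bar\rho/\bar\rho$ evaluated at $z$ only), and the proof in \cite{Jabin_Wang_2018} works directly on the asymmetric double sum, treating the two indices differently — which is exactly why only $\sup_z|\phi(\cdot,z)|$ enters. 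Dropping the symmetrization and either arguing directly as they do, or citing their theorem outright, removes the problem.
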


%
%

\subsection{Bounding the error terms}

\begin{lemma}\label{lem:AB}
The terms $A_N$ and $B_N$ introduced in Lemma~\ref{lem:dissipation} are such that

\[A_N(t) +\frac12 B_N(t) \leqslant C\left(\mathcal H_N(t) + \frac1N\right)\]
with 
\[C = \hat{C}_1\modif{\lambda}\modif{d}\|\nabla^2\bar{\rho}_t\|_{L^\infty}\|V\|_{L^\infty}+\hat{C}_2\modif{\lambda^2d^2} \|V\|_{L^\infty}^2\|\nabla\bar{\rho}_t\|_{L^\infty}^2\]
where $\hat{C}_1,\hat C_2$ are universal constants.
\end{lemma}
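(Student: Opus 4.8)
The plan is to bound $A_N(t)$ and $\tfrac12 B_N(t)$ separately, in both cases combining the change-of-measure inequality (Lemma~\ref{changement_de_loi}) with one of the two large-deviations estimates of Jabin and Wang (Theorems~\ref{borner_A} and \ref{borner_B}). The point is that both error terms have the shape $\mathbb E^{\rho_N}[\Phi_N]$ for a function $\Phi_N$ built from the centered quantity $V(x_i-x_j) - V\ast\bar\rho(x_i)$, which, by Assumption~\ref{assumption_K}, involves a bounded kernel; the factors $\nabla^2\bar\rho_N/\bar\rho_N$ and $|\nabla\bar\rho_N|^2/\bar\rho_N^2$ are bounded uniformly in time by Lemma~\ref{Bornes_derivees} and Theorem~\ref{exist} (recall $\bar\rho_t\in\mathcal C^\infty_\lambda$, so $\bar\rho_N\geq\lambda^{-N}$ but the relevant ratios are controlled componentwise: $|\nabla_{x_i}\bar\rho_N|/\bar\rho_N = |\nabla\bar\rho_t(x_i)|/\bar\rho_t(x_i) \leq \lambda\|\nabla\bar\rho_t\|_{L^\infty}$, and similarly $|\nabla_{x_i}^2\bar\rho_N|/\bar\rho_N \leq \lambda\|\nabla^2\bar\rho_t\|_{L^\infty}$ up to the lower-order term $\lambda^2\|\nabla\bar\rho_t\|_{L^\infty}^2$, which can be absorbed).

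For the term $A_N$: write $\psi_{\alpha\beta}(z,x) = V_{\alpha\beta}(z-x) - (V\ast\bar\rho)_{\alpha\beta}(z)$, which is bounded (by $2\|V\|_{L^\infty}$) and, crucially, satisfies the cancellation $\int_{\mathds T^d}\psi_{\alpha\beta}(z,x)\bar\rho_t(x)\,dx = 0$ required by Theorem~\ref{borner_A} — but with respect to $\mu = \bar\rho_t$, not the uniform measure, and this is exactly why $\bar\rho_N$ (not $u^{\otimes N}$) is the right reference. By exchangeability $A_N(t) = \frac1N\sum_j\mathbb E^{\rho_N}[(\psi(x_1,x_j)):\nabla_{x_1}^2\bar\rho_N/\bar\rho_N]$; factoring out the bound $\lambda\|\nabla^2\bar\rho_t\|_{L^\infty}$ on the Hessian ratio and applying Lemma~\ref{changement_de_loi} with $\nu = \bar\rho_N = \bar\rho_t^{\otimes N}$, $\eta$ a small constant times $\|V\|_{L^\infty}\lambda\|\nabla^2\bar\rho_t\|_{L^\infty}$, reduces matters to bounding $\frac1N\log\mathbb E^{\bar\rho_t^{\otimes N}}\exp(\text{const}\cdot\tfrac1N\sum_j\psi(x_1,x_j))$. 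One more Cauchy–Schwarz / Jensen step (or the quadratic trick used in \cite{Jabin_Wang_2018}) converts the single sum into the double-sum exponential $\exp(\tfrac1N\sum_{j_1,j_2}\psi(x_1,x_{j_1})\psi(x_1,x_{j_2}))$ to which Theorem~\ref{borner_A} applies, provided $\|V\|_{L^\infty}$ (hence $\|\psi\|_{L^\infty}$) is small enough relative to $1/\epsilon$ — this is where one must check the smallness conditions $\alpha,\beta<1$; since $\epsilon$ is free one can always accommodate a \emph{bounded} $\psi$ by taking $\epsilon$ small, at the price of enlarging $C$. The output is a bound $A_N(t)\leq \eta\mathcal H_N(t) + \tfrac{C}{N}$ with $\eta$ and $C$ of the claimed form.

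For the term $\tfrac12 B_N$: here $\phi(x_i,x_j) = \psi(x_i,x_j)$ enters only through $|\frac1N\sum_j\psi(x_i,x_j)|^2 = \frac1{N^2}\sum_{j_1,j_2}\psi(x_i,x_{j_1}):\psi(x_i,x_{j_2})$, and the prefactor $\rho_N|\nabla_{x_i}\bar\rho_N|^2/\bar\rho_N^2 \leq \rho_N\lambda^2\|\nabla\bar\rho_t\|_{L^\infty}^2$ is again bounded; after factoring it out and using exchangeability, $\tfrac12 B_N(t)\leq \tfrac12\lambda^2\|\nabla\bar\rho_t\|_{L^\infty}^2\,\mathbb E^{\rho_N}[\frac1{N^2}\sum_{j_1,j_2}\psi(x_1,x_{j_1}):\psi(x_1,x_{j_2})]$. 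Apply Lemma~\ref{changement_de_loi} and then Theorem~\ref{borner_B} (or again Theorem~\ref{borner_A} after a componentwise bound), using that $\psi$ is centered in both variables against $\bar\rho_t$ — the second cancellation $\int\psi(z,x)\bar\rho_t(x)dx=0$ we already have; the first, $\int\psi(x,z)\bar\rho_t(x)dx$, is $\int(V(x-z)-V\ast\bar\rho(z))\bar\rho_t(x)dx$, which one checks also vanishes by the definition of convolution. This gives $\tfrac12 B_N(t)\leq \eta'\mathcal H_N(t)+\tfrac{C'}{N}$. Summing the two contributions and collecting constants yields the stated bound. The main obstacle is the bookkeeping around the smallness conditions in Theorems~\ref{borner_A}–\ref{borner_B}: one has to verify that, for a bounded but not necessarily small $V$, the free parameters $\epsilon$ (resp. the implicit scaling in $\gamma$) together with the free $\eta$ in Lemma~\ref{changement_de_loi} can be chosen so that all the "$<1$" constraints hold while keeping $\eta+\eta'$ strictly below $1/2$ — this last point being what makes the $-\tfrac12\mathcal I_N$ term in Lemma~\ref{lem:dissipation} able to absorb the Fisher-information contributions and, combined with the log-Sobolev inequality of Corollary~\ref{coro_logsob}, close the Grönwall argument for Theorem~\ref{thm_prop}.
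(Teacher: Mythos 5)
Your bound for $B_N$ essentially follows the paper: factor out $\lambda^2\|\nabla\bar\rho_t\|_{L^\infty}^2$, apply Lemma~\ref{changement_de_loi} with $\eta=\|V\|_{L^\infty}^2$ so that the normalized kernel $\psi=\|V\|_{L^\infty}^{-1}\left(V(z-x)-V\ast\bar\rho(z)\right)$ satisfies $\|\psi\|_{L^\infty}\le 2$, and conclude with Theorem~\ref{borner_A}, which only needs the centering $\int\psi(z,x)\bar\rho(x)dx=0$; the normalization through the choice of $\eta$ (rather than any smallness of $V$) is indeed what makes the constants universal. Beware, though, that your claim that $\int\psi(x,z)\bar\rho(x)dx$ also vanishes ``by the definition of convolution'' is false in general: $\int V(x-z)\bar\rho(x)dx$ is the convolution of $\bar\rho$ with $x\mapsto V(-x)$ evaluated at $z$, which differs from $V\ast\bar\rho(z)$ unless $V$ is even. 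This does not hurt $B_N$ precisely because Theorem~\ref{borner_A} does not require centering in the first variable.

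The treatment of $A_N$, however, has a genuine gap. By factoring out the bound on the Hessian ratio before the change of measure you reduce $A_N$ to a constant times $\mathbb E^{\rho_N}\left|\frac1N\sum_j\psi(x_1,x_j)\right|$, a quantity that is genuinely of order $N^{-1/2}$, so no subsequent manipulation can produce the required $\eta\,\mathcal H_N(t)+C/N$. Concretely: if you apply Cauchy--Schwarz to the expectation and treat the square as in $B_N$, you get $\sqrt{\eta\mathcal H_N+C/N}$, and removing the square root by Young's inequality leaves an $O(1)$ (or at best $O(N^{-1/2})$) additive remainder, which destroys the uniform-in-$N$ Gr\"onwall closure; if instead you try to pass to the double sum inside the exponential, note that Lemma~\ref{changement_de_loi} puts $\frac1\eta\left|\sum_j\psi(x_1,x_j)\right|$ in the exponent (your displayed reduction drops the factor $N$), and bounding $|S|\le\frac12(\tau+S^2/\tau)$ yields the quadratic functional with an extra factor of order $N/\eta$, which violates the smallness conditions $\alpha,\beta<1$ of Theorem~\ref{borner_A} unless $\eta\gtrsim N$, making the term $\eta\mathcal H_N$ useless. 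The paper instead keeps $A_N$ linear: it applies Lemma~\ref{changement_de_loi} directly to $\Phi=\frac1{N^2}\sum_{i,j}\left(V(x_i-x_j)-V\ast\bar\rho(x_i)\right):\nabla^2_{x_i}\bar\rho_N/\bar\rho_N$ with $\eta=C_A$ proportional to $\lambda\|\nabla^2\bar\rho_t\|_{L^\infty}\|V\|_{L^\infty}$, and invokes Theorem~\ref{borner_B} for $\phi(z,x)=C_A^{-1}\left(V(z-x)-V\ast\bar\rho(z)\right):\frac{\nabla^2\bar\rho}{\bar\rho}(z)$. That theorem requires centering in \emph{both} variables, and the cancellation in the first variable is exactly the point your proposal never verifies: $\int\phi(z,x)\bar\rho(z)dz$ vanishes only after an integration by parts in $z$, using $K=\nabla\cdot V$ and $\nabla\cdot K=0$ from Assumption~\ref{assumption_K}. (A minor remark: since $\bar\rho_N$ is a tensor product, $\nabla^2_{x_i}\bar\rho_N/\bar\rho_N=\nabla^2\bar\rho(x_i)/\bar\rho(x_i)$ exactly, so there is no lower-order $|\nabla\bar\rho|^2$ term to absorb.)
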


\begin{proof}
Recall from Theorem~\ref{exist} that $\modif{\bar{\rho}_t}\in\mathcal C^\infty_\lambda(\mathbb T^d)$ for all $t\geq0$. We first bound $B_N$. \modif{For $(X^i_t)_i$ given in \eqref{o_Lang_part}}, we have
\begin{align*}
B_N=&\frac{1}{N}\sum_{i}\int_{\mathds{T}^{dN}}\rho_N\frac{\left|\nabla\modif{\bar{\rho}_t}\right|^2}{\modif{\bar{\rho}_t}^2}(x_i)\left|\frac{1}{N}\sum_jV(x_i-x_j)-V\ast \modif{\bar{\rho}_t}(x_i)\right|^2_\modif{f}\modif{d\textbf{x}}\\
=&\frac{1}{N}\sum_{i}\mathbb{E}\left(\left|\frac{\nabla\modif{\bar{\rho}_t}}{\modif{\bar{\rho}_t}}(X^i_t)\right|^2\left|\frac{1}{N}\sum_jV(X^i_t-X^j_t)-V\ast \modif{\bar{\rho}_t}(X^i_t)\right|^2_\modif{f}\right)\\
=&\frac{1}{N}\sum_{i}\sum_{\alpha,\beta=1}^d\mathbb{E}\left(\left|\frac{\nabla\modif{\bar{\rho}_t}}{\modif{\bar{\rho}_t}}(X^i_t)\right|^2\left(\frac{1}{N}\sum_jV_{\alpha,\beta}(X^i_t-X^j_t)-V_{\alpha,\beta}\ast \modif{\bar{\rho}_t}(X^i_t)\right)^2\right)\\
\leq&\frac{\lambda^2 \|\nabla\bar{\rho}_t\|^2_{L^\infty}}{N}\sum_{i}\sum_{\alpha,\beta=1}^d\mathbb{E}\left(\left(\frac{1}{N}\sum_jV_{\alpha,\beta}(X^i_t-X^j_t)-V_{\alpha,\beta}\ast \modif{\bar{\rho}_t}(X^i_t)\right)^2\right).
\end{align*}
We apply Lemma~\ref{changement_de_loi} to each
\begin{align*}
\Phi_{\alpha,\beta}=\left(\frac{1}{N}\sum_jV_{\alpha,\beta}(x_i-x_j)-V_{\alpha,\beta}\ast \modif{\bar{\rho}_t}(x_i)\right)^2,
\end{align*}
to get, for all $C_B>0$,
\begin{align*}
\mathbb{E}&\left(\left(\frac{1}{N}\sum_jV_{\alpha,\beta}(X^i_t-X^j_t)-V_{\alpha,\beta}\ast \modif{\bar{\rho}_t}(X^i_t)\right)^2\right)\\
&\leq C_B\mathcal{H}_N(t)+\frac{C_B}{N}\log\mathbb{E}\left(\exp\left(\frac{1}{C_B}\left(\frac{1}{\sqrt{N}}\sum_jV_{\alpha,\beta}(\bar{X}^i_t-\bar{X}^j_t)-V_{\alpha,\beta}\ast \modif{\bar{\rho}_t}(\bar{X}^i_t)\right)^2\right)\right).
\end{align*}
This way, 
\begin{align*}
B_N\leq& \frac{C_B\lambda^2\|\nabla\bar{\rho}_t\|^2_{L^\infty}}{N^2}\sum_{i}\sum_{\alpha,\beta}\log\int_{\mathds{T}^{dN}}\bar{\rho}_N\exp\left(\frac{1}{C_B}\left(\frac{1}{\sqrt{N}}\sum_jV_{\alpha,\beta}(x_i-x_j)-V_{\alpha,\beta}\ast \modif{\bar{\rho}_t}(x_i)\right)^2\right)\modif{d\textbf{x}}\\
&+C_Bd^2\lambda^2\|\nabla\bar{\rho}_t\|^2_{L^\infty}\mathcal{H}_N(t).
\end{align*}
In the following we choose $C_B=\modif{64e^2}\|V\|_{L^\infty}^2$. Applying Theorem~\ref{borner_A} to 
\begin{align*}
\psi(z,x)=\frac{1}{\modif{8e}\|V\|_{L^\infty}}\left(V(z-x)-V\ast \modif{\bar{\rho}_t}(z)\right),
\end{align*}
which satisfies $\|\psi\|_{L^\infty}\leq\modif{\frac{1}{4e}}$ and is such  that
\begin{align*}
\int_{\mathds{T}^{d}}\psi(z,x)\modif{\bar{\rho}_t}(x)dx=\frac{1}{\modif{8e}\|V\|_{L^\infty}}\int_{\mathds{T}^{d}}V(z-x)\modif{\bar{\rho}_t}(x)dx-\frac{1}{\modif{8e}\|V\|_{L^\infty}}\int_{\mathds{T}^{d}}V\ast \modif{\bar{\rho}_t}(z)\modif{\bar{\rho}_t}(x)dx=0,
\end{align*}
we get 
\begin{equation}\label{maj_B}
B_N\leq \modif{\hat{C}_B}\|V\|_{L^\infty}^2\lambda^2d^2\|\nabla\bar{\rho}_t\|^2_{L^\infty}\left(\mathcal{H}_N(t)+\frac{\tilde{C}_B}{N}\right),
\end{equation}
where \modif{$\hat{C}_B$ and} $\tilde{C}_B$ are universal constants. 

We now proceed with the bound on $A_N$.  Applying Lemma~\ref{changement_de_loi}  to
\begin{align*}
\Phi=\frac{1}{N^2}\sum_{i,j}\left(V(x_i-x_j)-V\ast \modif{\bar{\rho}_t}(x_i)\right):\frac{\nabla_{x_i}^2\bar{\rho}_N}{\bar{\rho}_N},
\end{align*}
we obtain, for all $C_A>0$,
\begin{align*}
A_N\leq& \frac{C_A}{N}\log\int_{\mathds{T}^{dN}}\bar{\rho}_N\exp\left(\frac{1}{C_AN}\sum_{i,j}\left(V(x_i-x_j)-V\ast \modif{\bar{\rho}_t}(x_i)\right):\frac{\nabla_{x_i}^2\bar{\rho}_N}{\bar{\rho}_N}\right)\modif{d\textbf{x}}+C_A\mathcal{H}_N(t)
\end{align*}
In the following we choose
$$C_A=4\sqrt{1600^2+36e^4}\|\nabla^2\bar{\rho}_t\|_{L^\infty}\|V\|_{L^\infty}\lambda\modif{d}:=\hat{C}_A\lambda\modif{d}\|\nabla^2\bar{\rho}_t\|_{L^\infty}\|V\|_{L^\infty}.$$
Then, we apply Theorem~\ref{borner_B} to
\begin{align*}
\phi(z,x)=\frac{1}{C_A}\left(\left(V(z-x)-V\ast \modif{\bar{\rho}_t}(z)\right):\frac{\nabla^2\modif{\bar{\rho}_t}}{\modif{\bar{\rho}_t}}(z)\right),
\end{align*}
which satisfies, thanks to Assumption~\ref{assumption_K}
\begin{align*}
\int_{\mathds{T}^{d}}\phi(z,x)\modif{\bar{\rho}_t}(z)dz=&\frac{1}{C_A}\int_{\mathds{T}^{d}}\left(\left(V(z-x)-V\ast\modif{\bar{\rho}_t}(z)\right):\frac{\nabla^2\modif{\bar{\rho}_t}}{\modif{\bar{\rho}_t}}(z)\right)\modif{\bar{\rho}_t}(z)dz\\
=&\frac{1}{C_A}\int_{\mathds{T}^{d}}\left(\text{div}K(z-x)-\text{div}K\ast\modif{\bar{\rho}_t}(z)\right)\modif{\bar{\rho}_t}(z)dz=0,
\end{align*}
and, thanks to $\int_{\mathds{T}^{d}}\left(V(z-x)-V\ast\modif{\bar{\rho}_t}(z)\right)\modif{\bar{\rho}_t}(x)dx=0$,
\begin{align*}
\int_{\mathds{T}^{d}}\phi(z,x)\modif{\bar{\rho}_t}(x)dx=0.
\end{align*}
Through our choice of $C_A$,  \eqref{gamma} is verified, as $\gamma\leq\left(1600^2+36e^4\right)\left(\frac{2\modif{d}\|V\|_{L^{\infty}}\|\nabla^2\bar{\rho}_t\|_{L^\infty}\lambda}{C_A}\right)^2=\frac{1}{4}<1$. Hence
\begin{equation}\label{maj_A}
A_N\leq \hat{C}_A\|\nabla^2\bar{\rho}_t\|_{L^\infty}\|V\|_{L^\infty}\lambda\modif{d}\left(\mathcal{H}_N(t)+ \frac{\tilde{C}_A}{N}\right),
\end{equation}
where $\hat{C}_A$ and $\tilde{C}_A$ are universal constants. The conclusion easily follows.

\end{proof}

%
%

\subsection{Proof of Theorem~\ref{thm_prop} in the smooth case}

It only remains to gather the previous results. Equations \eqref{maj_H}, \eqref{maj_B} and \eqref{maj_A} yield
\begin{align*}
\frac{d}{dt}\mathcal{H}_N(t)\leq&\left(\hat{C}_A\lambda\modif{d}\|\nabla^2\bar{\rho}_t\|_{L^\infty}\|V\|_{L^\infty}+\frac{\modif{\hat{C}_B}\|V\|_{L^\infty}^2\lambda^2\|\nabla\bar{\rho}_t\|^2_{L^\infty}d^2}{2}\right)\mathcal{H}_N(t)\\
&+\frac{C_2}{N}-\frac{1}{2}\mathcal{I}_N(t),
\end{align*}
and using Corollary~\ref{coro_logsob} and $\hat{C}_A\|\nabla^2\bar{\rho}_t\|_{L^\infty}\|V\|_{L^\infty}\lambda\modif{d}\leq\frac{1}{2}\left(\frac{2\pi}{\lambda}\right)^2+\frac{1}{2}\left(\frac{\lambda}{2\pi}\right)^2\hat{C}^2_A\|\nabla^2\bar{\rho}_t\|^2_{L^\infty}\|V\|^2_{L^\infty}\lambda^2\modif{d^2}$
\begin{align*}
\frac{d}{dt}\mathcal{H}_N(t)\leq&-\left(\left(\frac{2\pi}{\lambda}\right)^2-\hat{C}_A\modif{\lambda d}\|\nabla^2\bar{\rho}_t\|_{L^\infty}\|V\|_{L^\infty}-\frac{\modif{\hat{C}_B}\|V\|_{L^\infty}^2\lambda^2\|\nabla\bar{\rho}_t\|^2_{L^\infty}d^2}{2}\right)\mathcal{H}_N(t)+\frac{C_2}{N}\\
\leq&-\frac{1}{2}\left(\left(\frac{2\pi}{\lambda}\right)^2-\hat{C}_A^2\frac{\lambda^4}{4\pi^2}\modif{ d^2}\|\nabla^2\bar{\rho}_t\|^2_{L^\infty}\|V\|_{L^\infty}^2-\modif{\hat{C}_B}\|V\|_{L^\infty}^2\|\nabla\bar{\rho}_t\|^2_{L^\infty}\lambda^2d^2\right)\mathcal{H}_N(t)+\frac{C_2}{N}.
\end{align*}
In a more concise way, using Lemma~\ref{Bornes_derivees}, it means there are constants $C_1,C_2^\infty,C_3>0$ and a function $t\mapsto C_2(t)>0$ with $\int_0^t C_2(s)ds \leq C_2^\infty$ for all $t\geq0$ such that for all $t\geq0$
\begin{align*}
\frac{d}{dt}\mathcal{H}_N(t)\leq-(C_1-C_2(t))\mathcal{H}_N(t)+\frac{C_3}{N}.
\end{align*}
Multiplying by $\exp(C_1t-\int_0^t C_2(s) ds)$ and integrating in time we get
\begin{align*}
\mathcal H_N(t) &\leq e^{-C_1 t + \int_0^t C_2(s) ds} \mathcal H_N(0) + \frac{C_3}N\int_0^t e^{C_1(s- t) + \int_s^t C_2(u) du}ds \\
&\leq e^{C_2^\infty-C_1 t}\mathcal H_N(\modif{0}) + \frac{C_3}{C_1N}e^{C_2^\infty}, 
\end{align*}
which concludes.

%
%

\subsection{Dealing with the regularity of the density of the particle system}\label{sec:nonsmoothrhon}

As mentioned at the beginning of Section~\ref{Section_Proof}, up to now we have proven the result under the additional assumption that there exists a smooth solution $\rho_N$ to \eqref{part_liou}. Let us now remove this assumption. Consider $(\zeta_\epsilon)_{\epsilon\geq0}$  a sequence of mollifiers such that $\|\zeta_\epsilon\|_{L^1}=1$, whose compact support are assumed to be strictly contained within $\left[-\frac{1}{2},\frac{1}{2}\right]^{d}$. Let us consider $K^\epsilon=K\ast \zeta_\epsilon$. We have $K^\epsilon\in\mathcal{C}^{\infty}(\mathds{T}^d)$ and $\text{div}(K^\epsilon)=0$.

Let $\rho_N^{\epsilon}$ be the unique \modif{smooth} solution \modif{(see Lemma 8 below)} of the parabolic equation with smooth coefficients
\begin{equation}\label{FK_eps}
\partial_t\rho_N^{\epsilon}+\frac{1}{N}\sum_{i,j=1}^NK^\epsilon(x_i-x_j)\cdot\nabla_{x_i}\rho_N^{\epsilon}=\sum_{i=1}^N\Delta_{x_i}\rho_N^{\epsilon},
\end{equation}
with initial condition $\rho_N^{\epsilon}(0,\cdot)=\rho_N(0,\cdot).$. 

We have the following bounds
\begin{lemma}\label{bound_part}
Let $\gamma>1$ be such that $\rho_N(0)\in\mathcal C_\gamma^\infty(\mathds{T}^{dN})$. Then, for all $t\geq 0$ and all $\epsilon>0$, $\rho_N^\epsilon(t)\in\mathcal C_\gamma^\infty(\mathds{T}^{dN})$.
\end{lemma}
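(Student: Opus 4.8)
The plan is to establish the two-sided bound $\frac{1}{\gamma}\leq \rho_N^\epsilon(t)\leq \gamma$ by a probabilistic (stochastic representation) argument, exactly as was done for $\bar\rho$ in the proof of Theorem~\ref{exist}. Since $K^\epsilon$ is smooth, bounded, and divergence free on $\mathds{T}^{dN}$ (the interaction field $\mathbf{b}(\mathbf{x})=\left(\frac1N\sum_j K^\epsilon(x_i-x_j)\right)_{i=1}^N$ is smooth, bounded, and satisfies $\nabla\cdot\mathbf{b}=0$ because $\nabla\cdot K^\epsilon = 0$), equation \eqref{FK_eps} is a genuine Fokker--Planck equation for a non-explosive diffusion with smooth coefficients. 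Smoothness of $\rho_N^\epsilon$ itself is standard parabolic regularity and is already asserted in the statement; the content of the lemma is the uniform-in-time, uniform-in-$\epsilon$ bound by the \emph{same} constant $\gamma$ controlling the initial datum.

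The key steps, in order: First I would fix $t\geq 0$ and $\mathbf{x}\in\mathds{T}^{dN}$ and introduce the time-reversed diffusion $d\mathbf{Z}_s = \sqrt2\, d\mathbf{B}_s - \mathbf{b}(\mathbf{Z}_s)\, ds$ for $s\in[0,t]$ with $\mathbf{Z}_0=\mathbf{x}$, where $\mathbf{B}$ is a $dN$-dimensional Brownian motion on the torus; this is well-posed and non-explosive because $\mathbf{b}$ is smooth and bounded. Second, I would observe that because $\nabla\cdot\mathbf{b}=0$, the generator of the forward process, $\mathcal{L} = \Delta - \mathbf{b}\cdot\nabla$, has formal adjoint $\mathcal{L}^* = \Delta + \mathbf{b}\cdot\nabla + (\nabla\cdot\mathbf{b}) = \Delta + \mathbf{b}\cdot\nabla$ acting on densities, so \eqref{FK_eps} reads $\partial_t \rho_N^\epsilon = \mathcal{L}^*\rho_N^\epsilon$, i.e.\ $\rho_N^\epsilon$ solves the \emph{same} type of advection-diffusion equation as the one whose solution is represented by $\mathbb{E}_{\mathbf{x}}$ of the initial datum along $\mathbf{Z}$. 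Concretely, the Feynman--Kac / duality identity gives
\[
\rho_N^\epsilon(t,\mathbf{x}) = \mathbb{E}_{\mathbf{x}}\left(\rho_N(0,\mathbf{Z}_t)\right).
\]
Third, since $\frac{1}{\gamma}\leq \rho_N(0,\cdot)\leq \gamma$ pointwise by hypothesis, taking expectations in this identity immediately yields $\frac1\gamma \leq \rho_N^\epsilon(t,\mathbf{x})\leq \gamma$ for all $t$ and all $\epsilon$, which is the claim; smoothness having been noted already, $\rho_N^\epsilon(t)\in\mathcal{C}_\gamma^\infty(\mathds{T}^{dN})$ follows.

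The only genuinely delicate point is justifying the stochastic representation $\rho_N^\epsilon(t,\mathbf{x}) = \mathbb{E}_{\mathbf{x}}(\rho_N(0,\mathbf{Z}_t))$ rigorously, i.e.\ that the smooth bounded solution of \eqref{FK_eps} really coincides with this expectation; this is exactly the same step invoked in the proof of Theorem~\ref{exist}, and it rests on the divergence-free structure of $\mathbf{b}$ (so that the Fokker--Planck operator is $\Delta + \mathbf{b}\cdot\nabla$ rather than $\Delta + \nabla\cdot(\mathbf{b}\,\cdot)$ with an extra zeroth-order term), together with an It\^o-formula computation: applying It\^o to $s\mapsto v(t-s,\mathbf{Z}_s)$ where $v$ solves the backward equation, and checking that the local martingale is a true martingale using boundedness of $\mathbf{b}$ and of $v$ and its derivatives on $[0,t]\times\mathds{T}^{dN}$ (by parabolic regularity). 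I would simply cite the proof of Theorem~\ref{exist} for the argument, noting that $K^\epsilon$ inherits from $K$ all the properties used there (it is even better, being globally smooth), so no new difficulty arises; the uniformity in $\epsilon$ is automatic since the constant $\gamma$ never enters the SDE and only bounds the terminal datum $\rho_N(0,\cdot)$, which does not depend on $\epsilon$.
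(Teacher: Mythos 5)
Your proof is correct and is essentially the paper's own argument: represent $\rho_N^\epsilon(t,\mathbf{x})=\mathbb{E}_{\mathbf{x}}\left(\rho_N(0,\mathbf{Z}_t)\right)$ along the diffusion with smooth, bounded, divergence-free drift $-\mathbf{b}$ and transfer the two-sided bound $\tfrac{1}{\gamma}\leq\rho_N(0)\leq\gamma$ to $\rho_N^\epsilon(t)$, uniformly in $t$ and $\epsilon$. One cosmetic remark: since \eqref{FK_eps} reads $\partial_t\rho_N^\epsilon=\Delta\rho_N^\epsilon-\mathbf{b}\cdot\nabla\rho_N^\epsilon$, it is $\mathcal{L}\rho_N^\epsilon$ with $\mathcal{L}=\Delta-\mathbf{b}\cdot\nabla$ the generator of $\mathbf{Z}$ itself (not $\mathcal{L}^*=\Delta+\mathbf{b}\cdot\nabla$ as you write), which is precisely why the backward Kolmogorov representation applies with no zeroth-order term; this sign mislabeling does not affect your representation formula or the conclusion.
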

\begin{proof}
Let $\textbf{x}\in\mathds{T}^{dN}$ Consider the particle system $dX_i^\epsilon(t)=-\frac{1}{N}\sum_{j=1}^NK^\epsilon(X_i^\epsilon(t)-X_j^\epsilon(t))dt+\sqrt{2}dB^i_t$ with initial condition $\textbf{X}^\epsilon_0=\textbf{x}$, where we denote $\textbf{X}^\epsilon_t=(X_1^\epsilon(t),\dots,X_N^\epsilon(t))$. We have strong existence and uniqueness for this SDE. Then
\begin{align*}
\rho_N^{\epsilon}(t,\textbf{x})=\mathbb{E}\left(\rho_N^{\epsilon}(0,\textbf{X}^\epsilon_t)\right).
\end{align*}
The bounds on $\rho_N^{\epsilon}$ follow.
\end{proof}

Using Lemma~\ref{bound_part}, we get $(\rho_N^{\epsilon})_\epsilon$ is a sequence of smooth functions uniformly bounded in $L^{\infty}(\mathbb{R}^+\times\mathds{T}^{Nd})$. This yields two results.

First, we can extract a weakly-* converging subsequence in $L^{\infty}(\mathbb{R}^+\times\mathds{T}^{Nd})$, i.e there exists ${\rho_N\in L^{\infty}(\mathbb{R}^+\times\mathds{T}^{Nd})}$ such that for all $f\in L^{1}(\mathbb{R}^+\times\mathds{T}^{Nd})$ we have $$\int_{\mathds{T}^{Nd}}\rho_N^{\epsilon}f\underset{\epsilon\rightarrow 0^+} {\longrightarrow}\int_{\mathds{T}^{Nd}}\rho_Nf.$$
We finally check that $\rho_N$ is indeed a weak solution of \eqref{part_liou}.  For all $T\geq0$ and for all $f$ smooth test function on $[0,T]\times\mathds{T}^{Nd}$
\begin{description}
\item[$\bullet$] We have, since $\partial_tf$ \modif{is} smooth and therefore in $L^1([0,T]\times\mathds{T}^{Nd})$
\begin{align*}
\int_{\mathds{T}^{Nd}} \rho_N^{\epsilon} \partial_tf\rightarrow\int_{\mathds{T}^{Nd}} \rho_N\partial_tf.
\end{align*}
\item[$\bullet$] Likewise, since $\Delta_{x_i}f$ \modif{is} smooth and therefore in $L^1([0,T]\times\mathds{T}^{Nd})$
\begin{align*}
\int_{\mathds{T}^{Nd}} \rho_N^{\epsilon}\Delta_{x_i}f\rightarrow\int_{\mathds{T}^{Nd}} \rho_N\Delta_{x_i}f.
\end{align*}
\item[$\bullet$] Finally
\begin{align*}
\int_{\mathds{T}^{Nd}}\rho_N^{\epsilon}K^\epsilon(x_i-x_j)&\cdot\nabla_{x_i}f-\int_{\mathds{T}^{Nd}}\rho_NK(x_i-x_j)\cdot\nabla_{x_i}f\\
=&\int_{\mathds{T}^{Nd}}\rho_N^{\epsilon}\left(K^\epsilon(x_i-x_j)-K(x_i-x_j)\right)\cdot\nabla_{x_i}f+\int_{\mathds{T}^{Nd}}\left(\rho_N^{\epsilon}-\rho_N\right)K(x_i-x_j)\cdot\nabla_{x_i}f\\
\leq& \|\rho_N^{\epsilon}\|_{L^\infty}\|\nabla_{x_i}f\|_{L^\infty}\|K^\epsilon-K\|_{L^1}+\int_{\mathds{T}^{Nd}}\left(\rho_N^{\epsilon}-\rho_N\right)K(x_i-x_j)\cdot\nabla_{x_i}f\\
\rightarrow&0,
\end{align*}
as $\|K^\epsilon-K\|_{L^1}\rightarrow0$ and $K(x_i-x_j)\cdot\nabla_{x_i}f\in L^{1}([0,T]\times\mathds{T}^{Nd})$.
\end{description}
We have thus proven that $\rho_N$ is a weak solution of \eqref{part_liou}. 

Likewise, we may consider $(\bar{\rho}^{\epsilon})_\epsilon$, which weakly-* converges to a solution which, by uniqueness, is $\bar{\rho}$.

Second, $\rho_N^{\epsilon}$ satisfies the assumption made at the beginning of Section~\ref{Section_Proof}, i.e $\rho_N^{\epsilon}\in\mathcal{C}^\infty_{>0}(\mathbb{R}^+\times\mathds{T}^d)$. Since by considering $V^\epsilon=V\ast\zeta_\epsilon$  we have $K^\epsilon=\text{div}\left(V^\epsilon\right)$, we get that $K^\epsilon$ satisfies Assumption~\ref{assumption_K} and that the calculations done in Section~\ref{Section_Proof} are valid for this specific kernel, i.e 
\begin{equation}\label{res_reg}
\mathcal{H}_N(\rho^\epsilon_N\modif{(t)},\bar{\rho}^\epsilon_N\modif{(t)})\leq\mathcal{H}_N(\rho_N\modif{(0)},\bar{\rho}_N\modif{(0)})e^{-C_1^\epsilon t}e^{C^{\infty,\epsilon}}+\frac{C^\epsilon_3e^{C^{\infty,\epsilon}}}{C^\epsilon_1}\frac{1}{N}.
\end{equation}

Notice how, in the proof of Lemma~\ref{Bornes_derivees}, the constants bounding the various derivatives of $\bar{\rho}$ only depend on the initial conditions, on $\|K\|_{L^1}$ and on $\|V\|_{L^\infty}$. Since $(\zeta_\epsilon)_{\epsilon\geq0}$ is a sequence of mollifiers, we have ${\|K^\epsilon\|_{L^1}\rightarrow\|K\|_{L^1}}$ as $\epsilon$ tends to 0, and $\|V^\epsilon\|_{L^\infty}\leq\|V\|_{L^\infty}$. 
The righthand side of \eqref{res_reg} can thus be chosen independent of $\epsilon$.

We now use the fact that for $u\geq0$ and $v\in\mathbb{R}$ we have $ uv\leq u\log u-u+e^v,$ to obtain the variational formulation of the entropy,
\begin{equation}\label{var_ent}
N\mathcal{H}_N(\rho^\epsilon_N\modif{(t)},\bar{\rho}^\epsilon_N\modif{(t)})=\sup\left\{\mathbb{E}_{\rho^\epsilon_N\modif{(t)}}(g)-\mathbb{E}_{\bar{\rho}^\epsilon_N\modif{(t)}}\left(e^g\right)+1,g\in L^\infty\right\},
\end{equation}
the equality being attained for $g=\log\left(\frac{\rho^\epsilon_N}{\bar{\rho}^\epsilon_N}\right)$. We thus consider, for $g\in L^\infty$,
\begin{equation*}
\frac{1}{N}\left(\mathbb{E}_{\rho^\epsilon_N\modif{(t)}}(g)-\mathbb{E}_{\bar{\rho}^\epsilon_N\modif{(t)}}\left(e^{g}\right)+1\right)\leq\mathcal{H}_N(\rho_N\modif{(0)},\bar{\rho}_N\modif{(0)})e^{-C_1 t}e^{C^{\infty}}+\frac{C_3e^{C^{\infty}}}{1+C_1}\frac{1}{N}.
\end{equation*}
By definition of the weak-* convergence in $L^\infty$ (since both $g$ and $e^g$ are thus in $L^1$), we have the following convergence  ${\mathbb{E}_{\rho^\epsilon_N\modif{(t)}}(g)\longrightarrow\mathbb{E}_{\rho_N\modif{(t)}}(g)}$ and ${\mathbb{E}_{\bar{\rho}^\epsilon_N\modif{(t)}}\left(e^{g}\right)\longrightarrow\mathbb{E}_{\bar{\rho}_N\modif{(t)}}\left(e^{g}\right)}$ as $\epsilon$ tends to 0.
Therefore, for all $g\in L^\infty$,
\begin{equation*}
\frac{1}{N}\left(\mathbb{E}_{\rho_N\modif{(t)}}(g)-\mathbb{E}_{\bar{\rho}_N\modif{(t)}}\left(e^{g}\right)+1\right)\leq\mathcal{H}_N(\rho_N\modif{(0)},\bar{\rho}_N\modif{(0)})e^{-C_1 t}e^{C^{\infty}}+\frac{C_3e^{C^{\infty}}}{1+C_1}\frac{1}{N},
\end{equation*}
which yields Theorem~\ref{thm_prop}, using \eqref{var_ent} for $\mathcal{H}_N(\rho_N\modif{(t)},\bar{\rho}_N\modif{(t)})$.

%
%

\subsection{Proof of Corollary~\ref{coro_wass} }

Let $k\in\mathbb{N}$, and $N\geq k$. The sub-additivity of the entropy (see for instance Theorem 10.2.3 of \cite{ABC+00}) implies that the (rescaled) relative entropy of the marginals is bounded by the total (rescaled) relative entropy 
\begin{align*}
k\left\lfloor\frac{N}{k}\right\rfloor\mathcal{H}_k(\rho_{N}^{k}\modif{(t)},\bar{\rho}_k\modif{(t)})\leq N\mathcal{H}_N(\rho_N\modif{(t)},\bar{\rho}_N\modif{(t)}).
\end{align*}
The logarithmic Sobolev inequality established in Corollary~\ref{coro_logsob} implies a Talagrand's transportation inequality (see \cite{OV00}), so that the $L^2$-Wasserstein distance is bounded by the relative entropy. Classically, this is also the case of the total variation thanks to Pinsker's inequality, and thus
\begin{align*}
\|\rho_N^k(t) -\bar \rho_k(t)\|_{L^1}+\mathcal{W}_2(\rho_{N}^{k}\modif{(t)},\bar{\rho}_k\modif{(t)})\leq C\sqrt{k\mathcal{H}_k(\rho_{N}^{k}\modif{(t)},\bar{\rho}_k\modif{(t)})}\leq C\sqrt{\frac{N}{\left\lfloor\frac{N}{k}\right\rfloor}\mathcal{H}_N(\rho_{N}\modif{(t)},\bar{\rho}_N\modif{(t)})}.
\end{align*}
With the additional assumption that $\mathcal{H}_N(\rho_{N}\modif{(0)},\bar{\rho}_N\modif{(0)})=0$, we thus get the result using Theorem~\ref{thm_prop}. To obtain the result on the empirical measure, we recall for the sake of completeness the arguments of \cite[Proposition 8]{JM22}. Given $x,y\in\mathds{T}^{dN}$, a coupling of $\pi(x)$ and $\pi(y)$ is obtained by considering $(x_J,y_J)$ where $J$ is uniformly distributed over $\llbracket 1,N\rrbracket$. From this we get $\mathcal W_2(\pi(x),\pi(y))\leqslant |x-y|/\sqrt N$. Considering $(\mathbf{X},\mathbf{Y})$ an optimal coupling of $(\rho_N(t),\bar\rho_N(t))$, we bound
\begin{align*}
\mathbb E\left(\mathcal W_2(\pi(\mathbf{X},\bar\rho_{\modif{t}})\right) & \leqslant \mathbb E\left(\mathcal W_2(\pi(\mathbf{X}),\pi(\mathbf{Y}))\right)  + \mathbb E\left(\mathcal W_2(\pi(\mathbf{Y}),\bar\rho_{\modif{t}})\right) \\
& \leqslant \frac{1}{\sqrt N}  \mathcal W_2(\rho_N(t),\bar\rho_N(t)) + \mathbb E\left(\mathcal W_2(\pi(\mathbf{Y}),\bar\rho_{\modif{t}})\right) . 
\end{align*}
The last term is tackled with the result for i.i.d. variables established in \cite{FG15}.

%
%
%
%

\appendix

%
%

\section{Proof of Theorem~\ref{exist}}\label{Appendice_reg}

The proof is based on an iterative procedure, and relies heavily on the work of Ben-Artzi \cite{Ben94}. Let $\bar{\rho}^{(-1)}:=0$, and then for $k\in\mathbb{N}$ solve
\begin{align}
\partial_t\bar{\rho}^{(k)}=&-\left(u^{(k-1)}\cdot\nabla\right)\bar{\rho}^{(k)}+\Delta\bar{\rho}^{(k)}, \text{ in } \mathbb{R}^+\times\mathds{T}^d \label{it_1} \\
u^{(k)}=&K\ast\bar{\rho}^{(k)}\\
\bar{\rho}^{(k)}(0,\cdot)=&\mu_0 \label{it_3}.
\end{align}
Let us recall the following lemma concerning the regularity of the second order parabolic equation. We refer to Chapter 7 of \cite{Eva10} for a proof on a bounded domain that can be extended to the torus.
\begin{lemma}\label{lemma_heat}
Let $a(t,x)$ be a $\mathcal{C}^\infty$ function on $\mathbb{R}^+\times\mathds{T}^d$ and $\psi_0\in\mathcal C^\infty(\mathds{T}^d)$. Then the problem
\begin{equation*}
\begin{array}{ll}
        \partial_t\psi=- a\cdot\nabla\psi+\Delta\psi, \text{ in } \mathbb{R}^+\times\mathds{T}^d \\
        \psi(0,\cdot)=\psi_0,
    \end{array}
\end{equation*}
has a unique solution, which is $\mathcal{C}^\infty$.
\end{lemma}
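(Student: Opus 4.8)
The plan is to combine classical linear parabolic theory (as in Chapter~7 of \cite{evans_pde}, adapted from a bounded domain to the torus) with a bootstrap argument to upgrade regularity to $\mathcal C^\infty$. First I would establish existence and uniqueness of a weak solution on $[0,T]\times\mathds{T}^d$ for an arbitrary $T>0$ by a Galerkin approximation in the Fourier basis $(e^{2i\pi k\cdot x})_{k\in\mathbb{Z}^d}$, which diagonalizes the Laplacian and reduces the truncated problem to a finite-dimensional linear ODE with smooth, hence locally Lipschitz, coefficients, so Cauchy--Lipschitz applies. The a priori estimate needed to pass to the limit is obtained by multiplying the equation by $\psi$ and integrating over $\mathds{T}^d$:
\begin{equation*}
\frac12\frac{d}{dt}\|\psi\|_{L^2}^2 + \|\nabla\psi\|_{L^2}^2 = -\int_{\mathds{T}^d}(a\cdot\nabla\psi)\psi\,dx = \frac12\int_{\mathds{T}^d}(\nabla\cdot a)\psi^2\,dx \leq \frac12\|\nabla\cdot a\|_{L^\infty([0,T]\times\mathds{T}^d)}\|\psi\|_{L^2}^2,
\end{equation*}
so Grönwall's lemma bounds $\|\psi(t)\|_{L^2}$ on $[0,T]$ in terms of $\|\psi_0\|_{L^2}$ and $a$ only; since $T$ is arbitrary this rules out finite-time blow-up and yields a global solution on $\mathbb{R}^+\times\mathds{T}^d$. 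Uniqueness follows from the same estimate applied to the difference of two solutions, which solves the homogeneous equation with zero initial datum.

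Next I would bootstrap the spatial regularity. Applying a spatial multi-derivative $\partial^\alpha$ to the equation, the function $\partial^\alpha\psi$ satisfies a parabolic equation of the same form whose source term is a finite sum of products of derivatives of $a$ (bounded, by smoothness of $a$, locally in time) with derivatives of $\psi$ of order at most $|\alpha|$. The same type of energy estimate, now combined with Cauchy--Schwarz and Young's inequality to absorb the top-order contribution into the dissipation term $\|\nabla\partial^\alpha\psi\|_{L^2}^2$, together with an induction on $|\alpha|$ and Grönwall, gives $\psi\in L^\infty_{\mathrm{loc}}(\mathbb{R}^+;H^m(\mathds{T}^d))$ and $\nabla\psi\in L^2_{\mathrm{loc}}(\mathbb{R}^+;H^m(\mathds{T}^d))$ for every $m\in\mathbb{N}$. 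Time regularity is then read off the equation: $\partial_t\psi=\Delta\psi-a\cdot\nabla\psi$ lies in the same spaces up to a shift of two spatial derivatives, and differentiating the equation repeatedly in $t$ trades each time derivative for two space derivatives, so $\partial_t^j\partial^\alpha\psi\in L^\infty_{\mathrm{loc}}(\mathbb{R}^+;L^2(\mathds{T}^d))$ for all $j$ and $\alpha$. Since $\bigcap_{m}H^m(\mathds{T}^d)=\mathcal C^\infty(\mathds{T}^d)$ with the corresponding Sobolev embeddings on the compact manifold $\mathds{T}^d$, this yields $\psi\in\mathcal C^\infty(\mathbb{R}^+\times\mathds{T}^d)$.

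The main technical point to watch is the non-divergence-form transport term $a\cdot\nabla\psi$ in each energy estimate: in the base case it is handled by integration by parts, which moves the derivative onto $\nabla\cdot a$; in the higher-order estimates one must control the commutator $[\partial^\alpha,a\cdot\nabla]\psi$, which is precisely where one uses that $a$ and all its derivatives are bounded on $[0,T]\times\mathds{T}^d$ and that the genuinely top-order term can be absorbed into the parabolic dissipation. Everything else is routine and follows the scheme of \cite[Chapter~7]{evans_pde}, the only adaptation being the replacement of the bounded domain by $\mathds{T}^d$, where the periodic boundary conditions make all integrations by parts boundary-term-free.
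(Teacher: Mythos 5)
Your proposal is correct and follows exactly the route the paper relies on: the paper's ``proof'' is simply a citation of Chapter~7 of \cite{evans_pde} (Galerkin approximation, energy estimates, and parabolic bootstrap) adapted from a bounded domain to $\mathds{T}^d$, which is precisely the scheme you spell out, with the periodic setting removing boundary terms and compatibility conditions. No gaps of substance; the points you flag (commutator control, absorbing the top-order transport term into the dissipation, trading time for space derivatives) are the right ones.
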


\begin{lemma}
Suppose $\mu_0\in\mathcal{C}^\infty(\mathds{T}^d)$. Then the system \eqref{it_1}-\eqref{it_3} defines successively a sequence of $\mathcal{C}^\infty$ solutions $\{\bar{\rho}^{(k)},u^{(k)}\}_{k\in\mathbb{N}}$.\\ Furthermore, for all $t\geq0$ and all $k\in\mathbb{N}$, $\|\bar{\rho}^{(k)}(t,\cdot)\|_{L^\infty}\leq \|\mu_0\|_{L^\infty}$ and $\|u^{(k)}(t,\cdot)\|_{L^\infty}\leq \|K\|_{L^1}\|\mu_0\|_{L^\infty}$. \\
Finally, given a final time $T\geq0$, $\bar{\rho}^{(k)}$ (resp. $u^{(k)}$) and all its derivatives, both in time and in space, are bounded on $[0,T]\times \mathds{T}^d$ uniformly in $k$
\end{lemma}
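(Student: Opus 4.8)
The proof proceeds by induction on $k$, the three assertions being proven simultaneously at each step. First I would establish the base case: for $k=0$ we have $u^{(-1)} = K\ast \bar\rho^{(-1)} = 0$, so \eqref{it_1} reduces to the pure heat equation $\partial_t \bar\rho^{(0)} = \Delta \bar\rho^{(0)}$ with smooth initial datum $\mu_0$; Lemma~\ref{lemma_heat} gives a unique smooth solution, and then $u^{(0)} = K\ast\bar\rho^{(0)}$ is smooth because, writing $K = \nabla\cdot V$ with $V\in L^\infty$ and integrating by parts on the torus, $u^{(0)}(x) = -\int_{\mathds{T}^d} V(y)\nabla_y \bar\rho^{(0)}(x-y)\,dy$, which is as regular as $\bar\rho^{(0)}$ (this is exactly the argument already used in the proof of Theorem~\ref{exist}).

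For the inductive step, assume $\bar\rho^{(k-1)}$ and hence $u^{(k-1)} = K\ast\bar\rho^{(k-1)}$ are $\mathcal{C}^\infty$. Then \eqref{it_1} is a linear parabolic equation of exactly the form covered by Lemma~\ref{lemma_heat} with the smooth drift $a = u^{(k-1)}$, so it has a unique $\mathcal{C}^\infty$ solution $\bar\rho^{(k)}$, and $u^{(k)} = K\ast\bar\rho^{(k)} = -V\ast\nabla\bar\rho^{(k)}$ is again smooth by the same convolution argument. This closes the regularity induction.

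Next come the $L^\infty$ bounds. Here I would use the probabilistic (Feynman--Kac / characteristics) representation: since $u^{(k-1)}$ is smooth, the divergence-free property $\nabla\cdot u^{(k-1)} = 0$ (which holds because $\nabla\cdot K = 0$ in the sense of distributions, so $\nabla\cdot(K\ast\bar\rho^{(k-1)}) = 0$) means \eqref{it_1} is a genuine Fokker--Planck/advection-diffusion equation with no zeroth-order term, and one has the representation $\bar\rho^{(k)}(t,x) = \mathbb{E}_x\big(\mu_0(Z^{(k)}_t)\big)$ where $dZ^{(k)}_s = \sqrt{2}\,dB_s - u^{(k-1)}(t-s, Z^{(k)}_s)\,ds$ with $Z^{(k)}_0 = x$ — this SDE is well-posed and non-explosive because $u^{(k-1)}$ is smooth and bounded. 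This immediately gives $\|\bar\rho^{(k)}(t,\cdot)\|_{L^\infty} \leq \|\mu_0\|_{L^\infty}$, and then Young's convolution inequality yields $\|u^{(k)}(t,\cdot)\|_{L^\infty} = \|K\ast\bar\rho^{(k)}(t,\cdot)\|_{L^\infty} \leq \|K\|_{L^1}\|\bar\rho^{(k)}(t,\cdot)\|_{L^\infty} \leq \|K\|_{L^1}\|\mu_0\|_{L^\infty}$. Alternatively, one can get the $L^\infty$ bound by a maximum-principle argument for the linear parabolic equation, but the stochastic representation is cleaner and parallels what is already done in the paper.

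The last and most delicate assertion is the uniform-in-$k$ boundedness of $\bar\rho^{(k)}$ and all its space-time derivatives on $[0,T]\times\mathds{T}^d$. The key point is that the $L^\infty$ bounds above are \emph{already uniform in $k$}, which seeds an induction on the order of differentiation: for a multi-index $\alpha$, differentiating \eqref{it_1} gives $\partial_t(\partial^\alpha\bar\rho^{(k)}) = -u^{(k-1)}\cdot\nabla(\partial^\alpha\bar\rho^{(k)}) + \Delta(\partial^\alpha\bar\rho^{(k)}) + (\text{commutator terms involving lower-order derivatives of }\bar\rho^{(k)}\text{ and derivatives of }u^{(k-1)})$; one then runs an energy estimate (multiply by $\partial^\alpha\bar\rho^{(k)}$, integrate over $\mathds{T}^d$, use the divergence-free property to kill the transport term, and Cauchy--Schwarz/Young on the commutators) to bound $\sup_{[0,T]}\|\partial^\alpha\bar\rho^{(k)}\|_{L^2}$ plus $\int_0^T\|\nabla\partial^\alpha\bar\rho^{(k)}\|_{L^2}^2$ in terms of $\|u^{(k-1)}\|$-norms (which, via $u^{(k-1)} = -V\ast\nabla\bar\rho^{(k-1)}$ and $\|V\|_{L^\infty}$, are controlled by lower-order derivatives of $\bar\rho^{(k-1)}$, hence by the induction hypothesis on the derivative order, uniformly in $k$) and by lower-order derivatives of $\bar\rho^{(k)}$ (controlled by the induction on derivative order, uniformly in $k$). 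Feeding this through Morrey/Sobolev embeddings converts the $H^m$ bounds into $L^\infty$ bounds on all space derivatives, and the equation \eqref{it_1} itself then converts space-derivative bounds into time-derivative bounds. The main obstacle is purely bookkeeping: one must set up the double induction (on $k$ for existence/regularity, on derivative order for the uniform bounds) so that the constants genuinely do not depend on $k$ — the structural facts that make this work are precisely that $\nabla\cdot u^{(k-1)} = 0$ (no loss from the transport term in $L^2$ estimates), that $K = \nabla\cdot V$ with $V\in L^\infty$ (so convolution with $K$ costs one derivative but is otherwise bounded), and that the base $L^\infty$ bounds are already $k$-free. This is essentially the same mechanism as in the proof of Lemma~\ref{Bornes_derivees}, applied to the Picard iterates rather than the limit, so I would largely reference that computation.
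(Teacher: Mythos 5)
Your proposal is correct and follows essentially the same route as the paper: induction on $k$ with Lemma~\ref{lemma_heat} for smoothness, the identity $u^{(k)}=-V\ast\nabla\bar{\rho}^{(k)}$ with $V\in L^{\infty}$ for regularity of the drift, the stochastic representation $\bar{\rho}^{(k+1)}(t,x)=\mathbb{E}_x\big(\mu_0(Z^{(k+1)}_t)\big)$ plus Young's convolution inequality for the $k$-free $L^\infty$ bounds, and a re-run of the energy estimates of Lemma~\ref{Bornes_derivees} (induction on derivative order, constants depending only on $\mu_0$, $\|K\|_{L^1}$, $\|V\|_{L^\infty}$, time derivatives recovered from the equation) for the uniform-in-$k$ derivative bounds. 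No gaps worth noting.
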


\begin{proof}
We use induction on $k$. The assertion is clear for $\bar{\rho}^{(0)}$ as the explicit solution to the heat equation. Suppose $\{\bar{\rho}^{(j)},u^{(j-1)}\}_{j=0,...,k}$ have be shown to be $\mathcal{C}^\infty$ solutions bounded uniformly in time.
\paragraph{Regularity.}  By definition
\begin{align*}
u^{(k)}(t,x)=K\ast\bar{\rho}^{(k)}(t,x)=\int_{\mathds{T}^d}K(x-y)\bar{\rho}^{(k)}(t,y)dy=-\int_{\mathds{T}^d}K(y)\bar{\rho}^{(k)}(t,x-y)dy.
\end{align*}
Then
\begin{align*}
u^{(k)}(t,x)=-\int_{\mathds{T}^d}\text{div}V(y)\bar{\rho}^{(k)}(t,x-y)dy=-\int_{\mathds{T}^d}V(y)\nabla_y\bar{\rho}^{(k)}(t,x-y)dy.
\end{align*}
Since we are in the compact set $\mathds{T}^d$, that $V\in L^{\infty}(\mathds{T}^d)$, and that $\bar{\rho}^{(k)}\in\mathcal{C}^\infty(\mathbb{R}^+\times\mathds{T}^d)$ by induction hypothesis, we can easily show that $u^{(k)}$, as well as all its derivatives, are Lipschitz continuous. Hence $u^{(k)}$ is $\mathcal{C}^\infty$. Using Lemma~\ref{lemma_heat} in \eqref{it_1} with $k$ replaced by $k+1$ yields the desired result for $\bar{\rho}^{(k+1)}$. 

\paragraph{Boundedness of $\bar{\rho}^{(k+1)}$ and $u^{(k)}$.} Let us show that for all $T\geq0$, $\bar{\rho}^{(k+1)}$ and $u^{(k)}$ are both bounded on $[0,T]\times\mathds{T}^d$, with a bound independent of $T$. We have, using Young's convolution inequality and the induction hypothesis
\begin{align*}
\|u^{(k)}(t,\cdot)\|_{L^\infty}\leq \|K\|_{L^1}\|\bar{\rho}^{(k)}(t,\cdot)\|_{L^\infty}\leq\|K\|_{L^1}\|\mu_0\|_{L^\infty}.
\end{align*}
Now $\bar{\rho}^{(k+1)}$ is the unique solution of
\begin{align*}
\partial_t\bar{\rho}^{(k+1)}=&-\left(u^{(k)}\cdot\nabla\right)\bar{\rho}^{(k+1)}+\Delta\bar{\rho}^{(k+1)}\\
\bar{\rho}^{(k+1)}(0,x)=&\mu_0(x).
\end{align*}
For $t\geqslant 0$, consider $Z^{(k+1)}_s$ the  strong solution of the following stochastic differential equation for $s\in[0,t]$
\begin{equation*}
 dZ^{(k+1)}_s=\sqrt{2}dB_s-u^{(k)}(t-s,Z_s)ds,
\end{equation*}
which exists, is unique  and non-explosive since $u^{(k)}$ is smooth, bounded and Lipschitz continuous. Then 
\[\bar{\rho}^{(k+1)}(t,x) = \mathbb E_x\left( \mu_0(Z^{(k+1)}_t)\right).\]
We thus get  $$\|\bar{\rho}^{(k+1)}_t\|_{L^\infty}\leq \|\mu_0\|_{L^\infty}.$$
\modif{Notice that this is simply a probabilistic way of presenting the use of the maximum principle.} 

\paragraph{Boundedness of the derivatives of $\bar{\rho}^{(k+1)}$ and $u^{(k)}$.} The boundedness of the derivatives of $u^{(k)}$ is a direct consequence of the boundedness of the derivatives of $\bar{\rho}^{(k)}$ thanks to Young's convolution inequality. Then, the proof for $\bar{\rho}^{(k+1)}$  similar to the proof of Lemma~\ref{Bornes_derivees}, using the boundedness of the derivatives of $u^{(k)}$. To show that the bounds are in fact independent of $k$, we follow the proof of Lemma~\ref{Bornes_derivees}, i.e by induction on the order of the derivative, and within each induction step we prove that both the integrated and uniform bounds are independent of $k$. This comes from the fact that the proof initially only relies on the bounds on $\|\bar{\rho}^{(k+1)}_t\|_{L^\infty}$ and $\|u^{(k)}_t\|_{L^\infty}$ -which, as we have shown, only depend on $\|\mu_0\|_{L^\infty}$- and then for each induction step on the initial condition and on the bounds constructed at the previous step (therefore independent of $k$). The bounds concerning the derivatives involving time are then obtained thanks to the bounds on the space derivatives using \eqref{it_1}.

\end{proof}

\textit{Proof of Theorem~\ref{exist}.}
It is sufficient to prove existence and uniqueness of the solution in $[0,T]\times\mathds{T}^d$ for all $T\geq0$, since then the solutions on $[0,T_1]\times\mathds{T}^d$ and $[0,T_2]\times\mathds{T}^d$, with $T_1<T_2$, must coincide in $[0,T_1]\times\mathds{T}^d$, leading to the existence and uniqueness of the global solution in $\mathbb{R}^+\times\mathds{T}^d$. Let us consider $T\geq0$

\paragraph{Existence in $[0,T]\times\mathds{T}^d$ for $T$ small enough:} Let us show the existence of the limit solution. We consider here $T$ to be small enough (an explicit bound will be given later)
Let $G(t,x)=\sum_{k\in\mathbb{Z}^d}\frac{1}{(4\pi t)^\frac{d}{2}}\exp(-\frac{|x+k|^2}{4 t})$ be the heat kernel on the d dimensional torus. We have
\begin{align*}
\bar{\rho}^{(k)}(t,x)=G(t,\cdot)\ast\mu_0(x)-\int_0^t\int_{\mathds{T}^d}G(t-s,x-y)u^{(k-1)}(s,y)\cdot\nabla_y \bar{\rho}^{(k)}(s,y)dyds.
\end{align*}
Let us denote $N_k(t)=\sup_{0\leq s\leq t}\|\bar{\rho}^{(k+1)}(s,\cdot)-\bar{\rho}^{(k)}(s,\cdot)\|_{L^\infty}$. We have, using $\nabla_y\cdot u^{(k)}=0$
\begin{align*}
\bar{\rho}^{(k+1)}(t,x)-\bar{\rho}^{(k)}(t,x)=&-\int_0^t\int_{\mathds{T}^d}\nabla_yG(t-s,x-y)\left(\bar{\rho}^{(k+1)}(s,y)-\bar{\rho}^{(k)}(s,y)\right)u^{(k
)}(s,y)dyds\\
&-\int_0^t\int_{\mathds{T}^d}\nabla_yG(t-s,x-y)\bar{\rho}^{(k)}(s,y)\left(u^{(k)}(s,y)-u^{(k-1)}(s,y)\right)dyds.
\end{align*}
Remark (using the first moment of the chi distribution) that, for some constant $\beta>0$
\begin{align*}
\int_{\mathds{T}^d}|\nabla_xG(t,x)|dx\leq\beta t^{-\frac{1}{2}}.
\end{align*}
We thus get
\begin{align*}
\|\bar{\rho}^{(k+1)}(t,\cdot)-\bar{\rho}^{(k)}(t,\cdot)\|_{L^\infty}\leq&\beta\|K\|_{L^1}\|\mu_0\|_{L^\infty}\int_0^t(t-s)^{-\frac{1}{2}}\|\bar{\rho}^{(k+1)}(s,\cdot)-\bar{\rho}^{(k)}(s,\cdot)\|_{L^\infty}ds\\
&+\beta\|\mu_0\|_{L^\infty}\int_0^t(t-s)^{-\frac{1}{2}}\|u^{(k)}(s,\cdot)-u^{(k-1)}(s,\cdot)\|_{L^\infty}ds,
\end{align*}
and
\begin{align*}
\|u^{(k)}(s,\cdot)-u^{(k-1)}(s,\cdot)\|_{L^\infty}\leq \|K\|_{L^1}\|\bar{\rho}^{(k)}(s,\cdot)-\bar{\rho}^{(k-1)}(s,\cdot)\|_{L^\infty}.
\end{align*}
Therefore
\begin{align*}
N_k(t)\leq\beta\|K\|_{L^1}\|\mu_0\|_{L^\infty}\int_0^t(t-s)^{-\frac{1}{2}}N_k(s)ds+\beta\|K\|_{L^1}\|\mu_0\|_{L^\infty}\int_0^t(t-s)^{-\frac{1}{2}}N_{k-1}(s)ds.
\end{align*}
Denoting $C=\beta\|K\|_{L^1}\|\mu_0\|_{L^\infty}$ we get 
\begin{equation}\label{ineq_rec_Q_k}
N_k(t)\leq C\int_0^t(t-s)^{-\frac{1}{2}}\left(N_k(s)+N_{k-1}(s)\right)ds.
\end{equation}
Since $N_k$ is continuous, there exists $R>0$ such that for all $t\in[0,T]$ we have $N_k(t)\leq R$. We thus have, using this bound in \eqref{ineq_rec_Q_k} and assuming $2C\sqrt{T}\leq\frac{1}{2}$
\begin{align*}
N_k(t)\leq& RC\int_0^t(t-s)^{-\frac{1}{2}}ds+C\int_0^t(t-s)^{-\frac{1}{2}}N_{k-1}(s)ds\\
\leq&\frac{R}{2}+C\int_0^t(t-s)^{-\frac{1}{2}}N_{k-1}(s)ds.
\end{align*}
We use this bound in \eqref{ineq_rec_Q_k}
\begin{align*}
N_k(t)\leq& \frac{R}{2}C\int_0^t(t-s)^{-\frac{1}{2}}ds +C\int_0^t(t-s)^{-\frac{1}{2}}N_{k-1}(s)ds\\
&+C^2\int_0^t\int_0^s(t-s)^{-\frac{1}{2}}(s-u)^{-\frac{1}{2}}N_{k-1}(u)duds.
\end{align*}
We deal with this last term
\begin{align*}
C^2\int_0^t\int_0^s&(t-s)^{-1/2}(s-u)^{-\frac{1}{2}}N_{k-1}(u)duds\\
=&C^2\int_0^tN_{k-1}(u)\int_u^t(t-s)^{-\frac{1}{2}}(s-u)^{-\frac{1}{2}}dsdu\\
=&C^2\pi\int_0^tN_{k-1}(u)du.
\end{align*}
Let $\alpha=\sqrt{T}\pi C$ and choose $T$ such that $\alpha\leq\frac{1}{2}$ (which in turns also yields the previous condition ${2C\sqrt{T}\leq\frac{1}{2}}$). We have 
\begin{align*}
\alpha C\int_0^t(t-s)^{-\frac{1}{2}}N_{k-1}(s)ds-C^2\pi\int_0^tN_{k-1}(s)du=C\int_0^tN_{k-1}(s)\left(\alpha(t-s)^{-\frac{1}{2}}-\pi C\right)ds,
\end{align*}
and since $\alpha=\sqrt{T}\pi C\geq\sqrt{t-s}\pi C$ for $0\leq s\leq t\leq T$, we get
\begin{align*}
\alpha C\int_0^t(t-s)^{-\frac{1}{2}}N_{k-1}(s)ds\geq C^2\pi\int_0^tN_{k-1}(s)du,
\end{align*}
and thus
\begin{align*}
N_k(t)\leq& \frac{R}{4} +C(1+\alpha)\int_0^t(t-s)^{-\frac{1}{2}}N_{k-1}(s)ds.
\end{align*}
Iterating this method, we obtain for all $n\in\mathbb{N}$
\begin{align*}
N_k(t)\leq& 2^{-n}R +C(1+\alpha+\dots+\alpha^{n-1})\int_0^t(t-s)^{-\frac{1}{2}}N_{k-1}(s)ds,
\end{align*}
and thus
\begin{align*}
N_k(t)\leq2C\int_0^t(t-s)^{-\frac{1}{2}}N_{k-1}(s)ds.
\end{align*}
We now show that this implies that
\begin{equation}\label{relation_rec_Q}
N_k(t)\leq N_0(T)\left(2C\Gamma\left(\frac{1}{2}\right)\right)^k t^{k/2}\Gamma\left(\frac{k+2}{2}\right)^{-1},
\end{equation}
where we denote $\Gamma\left(z\right)=\int_0^\infty t^{z-1}e^{-t}dt$. We have that for $k=0$, \eqref{relation_rec_Q} is satisfied and, by induction, we have
\begin{align*}
\int_0^t(t-s)^{-\frac{1}{2}}s^{\frac{k}{2}}ds=t^{\frac{k+1}{2}}\int_0^1(1-u)^{-\frac{1}{2}}u^{\frac{k}{2}}du=t^{\frac{k+1}{2}}\frac{\Gamma\left(\frac{1}{2}\right)\Gamma\left(\frac{k+2}{2}\right)}{\Gamma\left(\frac{k+3}{2}\right)}
\end{align*}
Using the fact that $\Gamma(k+1)=k!$ and $\Gamma(k+\frac{3}{2})=k!\Gamma(\frac{1}{2})$, we get that $\sum_{k=0}^\infty N_k(t)$ converges uniformly for $t\in[0,T]$ and the limits
\begin{align*}
\bar{\rho}(t,x)=\lim_{k\rightarrow\infty}\bar{\rho}^{(k)}(t,x)\ \text{ and }\ u(t,x)=\lim_{k\rightarrow\infty}u^{(k)}(t,x)
\end{align*}
exist in $\mathcal{C}([0,T]\times\mathds{T}^d)$. Now, since for all $l,n\in\mathbb{N}$ and all $\alpha_1,...,\alpha_n$, $\|\partial^l_{t}\partial_{\alpha_1,...,\alpha_n}\bar{\rho}^{(k)}\|_{L^\infty}$ and $\|\partial^l_{t}\partial_{\alpha_1,...,\alpha_n}u^{(k)}\|_{L^\infty}$ are bounded uniformly in $k$, using Arzela-Ascoli theorem, we have uniform convergence, up to an extraction, of the derivatives. Hence the validity of the limits in $\mathcal{C}^{\infty}([0,T]\times\mathds{T}^d)$, i.e there is convergence of the functions along with their derivatives of all order in $[0,T]\times\mathds{T}^d$. This gives us the fact that the limit $\bar{\rho}$ satisfies \eqref{PDE_vortex}.

\paragraph{Uniqueness in $[0,T]\times\mathds{T}^d$.} Suppose $\bar{\rho}^1$ and $\bar{\rho}^2$ are two bounded solutions of \eqref{PDE_vortex} on $[0,T]\times\mathds{T}^d$. Then
\begin{align*}
\partial_t\left(\bar{\rho}^1-\bar{\rho}^2\right)-\Delta\left(\bar{\rho}^1-\bar{\rho}^2\right)=-\left(K\ast\bar{\rho}^1\right)\cdot\nabla\left(\bar{\rho}^1-\bar{\rho}^2\right)-\nabla\cdot\left(\left(K\ast\bar{\rho}^1-K\ast\bar{\rho}^2\right)\bar{\rho}^2\right),
\end{align*}
so that
\begin{align*}
\bar{\rho}^1(t,x)-\bar{\rho}^2(t,x)=&-\int_0^t\int_{\mathds{T}^d}\nabla_yG(t-s,x-y)\cdot\left(K\ast_y\bar{\rho}^1(s,y)\right)\left(\bar{\rho}^1(s,y)-\bar{\rho}^2(s,y)\right)dyds\\
&-\int_0^t\int_{\mathds{T}^d}\nabla_yG(x-y,t-s)\cdot\left(K\ast_y\bar{\rho}^1(s,y)-K\ast_y\bar{\rho}^2(s,y)\right)\bar{\rho}^2(s,y)dyds.
\end{align*}
Let $N(t):=\sup_{0\leq s\leq t}\|\bar{\rho}^1(s,\cdot)-\bar{\rho}^2(s,\cdot)\|_{L^\infty}.$ Recall
\begin{align*}
\|K\ast\bar{\rho}^1(s,\cdot)-K\ast\bar{\rho}^2(s,\cdot)\|_{L^\infty}\leq \|K\|_{L^1}\|\bar{\rho}^1(s,\cdot)-\bar{\rho}^2(s,\cdot)\|_{L^\infty},
\end{align*}
which implies, like previously, the existence of a constant $C$ such that
\begin{align*}
N(t)\leq C\int_0^t(t-s)^{-1/2}N(s)ds.
\end{align*}
We choose $L>0$ such that $C\int_0^Ts^{-\frac{1}{2}}e^{-Ls}ds\leq\frac{1}{2}$, and let $Q(t)=e^{-Lt}N(t)$, which satisfies for all $t$
\begin{align*}
Q(t)\leq C\int_0^t(t-s)^{-1/2}Q(s) e^{-L(t-s)}ds.
\end{align*}
Let $R>0$ be such that $Q(t) \leq R$.

Then 
\begin{align*}
Q(t)\leq RC\int_0^t(t-s)^{-1/2} e^{-L(t-s)}ds\leq\frac{R}{2}.
\end{align*}
By induction, we get $N(t)=0$ for $t\in[0,T]$. This concludes the proof of uniqueness.

\paragraph{Existence in $\mathbb{R}^+\times\mathds{T}^d$.} For $T$ small enough, there exists a solution in $[0,T]\times\mathds{T}^d$. Notice that $T$ only depends on constants independent of time (it depends on the $L^\infty$ bound of the initial condition, which we have shown propagates). It is therefore possible to construct the (unique) smooth solution on all intervals $[t_0,T+t_0]\times\mathds{T}^d$. Uniqueness allows us to iteratively construct the (unique) smooth solution on $\mathbb{R}^+\times\mathds{T}^d$. This concludes the proof.
\begin{flushright}$\openbox$\end{flushright}

%
%

{\bf Acknowledgements}\\
This work has been (partially) supported by the Project EFI ANR-17-CE40-0030 of the French National Research Agency. The authors sincerely thank Didier Bresch, Laurent Chupin and Nicolas Fournier for many discussions around this topic.

\bibliographystyle{alpha}
\bibliography{biblio_2023_JW}

\end{document}